\newtheorem{theorem}{Theorem}
\newtheorem{lemma}[theorem]{Lemma}
\newtheorem{corollary}[theorem]{Corollary}
\theoremstyle{definition}
\newtheorem*{acknowledgement}{Acknowledgement}
\title{Gluing locally symmetric manifolds: asphericity and rigidity}
\author{T. T$\hat{\mathrm{a}}$m Nguy$\tilde{\hat{\mathrm{e}}}$n Phan}
\address{Department of Mathematics\\
5734 S. University Ave.\\
Chicago, IL 60637}
\email{ttamnp@math.uchicago.edu}
\DeclareMathOperator{\Flag}{Flag}
\DeclareMathOperator{\Ad}{Ad}
\DeclareMathOperator{\Ker}{Ker}
\DeclareMathOperator{\Lk}{Lk}
\DeclareMathOperator{\Isom}{Isom}
\DeclareMathOperator{\Out}{Out}
\DeclareMathOperator{\SL}{SL}
\DeclareMathOperator{\SO}{SO}
\DeclareMathOperator{\CAT}{CAT}
\DeclareMathOperator{\Cone}{Cone}
\DeclareMathOperator{\Fix}{Fix}
\DeclareMathOperator{\Homeo}{Homeo}
\DeclareMathOperator{\Conj}{Conj}
\DeclareMathOperator{\St}{St}
\DeclareMathOperator{\st}{st}
\def\R{\mathbb{R}}
\def\Z{\mathbb{Z}}
\def\N{\mathbb{N}}
\def\Q{\mathbb{Q}}
\def\X{\overline{X}}
\def\M{\overline{M}}
\begin{document}
\begin{abstract}
We use the reflection group trick to glue manifolds with corners that are Borel-Serre compactifications of locally symmetric spaces of noncompact type and obtain aspherical manifolds. We call these \emph{piecewise locally symmetric} manifolds. This class of spaces provide new examples of aspherical manifolds whose fundamental groups have the structure of a complex of groups. These manifolds typically do not admit a locally $\CAT(0)$ metric. We prove that any self homotopy equivalence of such manifolds is homotopic to a homeomorphism. We compute the group of self homotopy equivalences of such a manifold and show that it can contain a normal free abelian subgroup, and thus can be infinite.
\end{abstract}
\maketitle
\section{Introduction}
There has been a desire to construct more examples of aspherical manifolds, to enrich the list of known examples or to disprove various conjectures on aspherical manifolds. Gluing constructions have been playing a crucial role in giving interesting examples of such manifolds. For example, the Davis construction using the reflection group trick gives examples of closed aspherical manifolds whose universal covers are not homeomorphic to any Euclidean spaces.
\newline

\textbf{Piecewise locally symmetric manifolds.} In this paper we introduce a class of aspherical manifolds, which we call \emph{piecewise locally symmetric manifolds}, that are obtained by applying the reflection group trick to manifolds with corners that are the Borel-Serre compactification of complete, finite volume, irreducible, locally symmetric spaces of noncompact type. We will recall the reflection group trick and define these manifolds in Section~\ref{sec:definitions}. (Un)fortunately, these manifolds satisfy the Hopf conjecture. However, they have interesting properties: homotopy equivalences of such manifolds are homotopic to a homeomorphism; they can have infinite groups of self homotopy equivalences; and they form a new class of examples of spaces whose fundamental groups have the structure of a complex of groups.

We begin with a simple example. Let $X$ be the product of two surfaces $S_1\times S_2$, each of which has one boundary component $\partial S_i$. Then topologically, $X$ is a manifold with boundary but smoothly, a manifold with corners. The boundary $\partial X$ is the union of its two codimension $1$ corners $S_1\times\partial S_2$ and $\partial S_1\times S_2$ along its codimension $2$ corner $\partial S_1\times\partial S_2$. If we double $X$ along its boundary, we will get a manifold $M$ with $\pi_2(M)$ nontrivial since there are noncontractible loops in the boundary that are contractible in the manifold. To get an aspherical manifold, we can proceed as follows. 

Let $D_3$ be the dihedral group of order $6$. Then $D_3$ acts on the Euclidean plane $\R^2$ as the group generated by reflections across two lines at angle $\pi/3$ with each other at the origin. A fundamental domain of $D_3$ is a sector $D$ with angle $\pi/3$, which is a manifold with corners with two codimension $1$ corners and one codimension $2$ corner.  One can recover the plane (which is a manifold) by gluing $6$ copies of $D$ (which is a manifold with corners) in a natural way. 

In the same way, we can glue six copies of the above $X$ to get a closed manifold $M$ on which $D_3$ acts with a fundamental domain a copy of $X$. The fundamental group of $M$ has the structure of a hexagon of groups.

Some more interesting examples are the following. Given an arithmetic manifold, e.g. $\SO(3,\R)\backslash\SL(3,\R)/\Gamma$, for some torsion-free subgroup $\Gamma$ of $\SL(3,\Z)$, its Borel-Serre compactification is a manifold $Y$ with corners. One can do the same thing as above with $Y$ (instead of $X$) and an appropriate Coxeter group $W$ (instead of $D_3$) to obtain an manifold $M$ on which $W$ acts with a fundamental domain a copy of $Y$. The manifold $M$ is compact if and only if $W$ is finite. If $W$ is not finite, one can get a compact manifold by taking the quotient of $M$ by a torsion-free, finite index subgroup of $W$. This is the general idea of the reflection group trick. 

The above manifolds are examples of piecewise locally symmetric manifolds. The first main result of this paper is that they are aspherical.

\begin{theorem}[Asphericity]\label{asphericity}
Piecewise locally symmetric manifolds are aspherical.
\end{theorem}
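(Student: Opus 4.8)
The plan is to realize a piecewise locally symmetric manifold as a finite cover of a space $\mathcal{U}(W,Y)$ produced by the reflection group trick, and then to deduce asphericity from the asphericity criterion for such basic constructions due to Davis; all of the geometric input goes into checking the hypotheses of that criterion. Write $Y$ for the Borel--Serre compactification of the locally symmetric space in question --- a compact aspherical manifold with corners --- $\Gamma=\pi_1(Y)$ for the (torsion-free, arithmetic) lattice, and $\{Y_s\}_{s\in S}$ for the codimension-one faces of $Y$, that is, the closures of the Borel--Serre boundary strata attached to the $\Gamma$-conjugacy classes of maximal proper rational parabolic subgroups. By the construction recalled in Section~\ref{sec:definitions}, there is a Coxeter system $(W,S)$ whose mirror structure on $Y$ is exactly $\{Y_s\}_{s\in S}$, the manifold is $M=\mathcal{U}(W,Y)$, and a closed piecewise locally symmetric manifold is a quotient $M/\Gamma'$ by a torsion-free finite-index subgroup $\Gamma'\le W$. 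Since $\Gamma'$ acts freely on $M$, the manifolds $M$ and $M/\Gamma'$ have the same universal cover, so it suffices to prove that $\mathcal{U}(W,Y)$ is aspherical.

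For this I would use the following form of Davis's criterion: $\mathcal{U}(W,Y)$ is aspherical provided $Y$ is aspherical and, for every spherical subset $T\subseteq S$, the face $Y_T:=\bigcap_{s\in T}Y_s$ is connected, aspherical, and $\pi_1$-injective in $Y$. The idea behind this statement is that on the universal cover $\mathcal{U}$ is assembled from contractible copies of $\widetilde Y$ glued along preimages of the faces; connectedness and $\pi_1$-injectivity force each such preimage, and each intersection of two chamber-copies, to be a single contractible copy of some $\widetilde{Y_T}$, so $\widetilde{\mathcal{U}}$ is a union of contractible pieces with contractible-or-empty intersections and is therefore homotopy equivalent to the nerve of this cover, which is contractible.

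It then remains to extract three facts from Borel and Serre's description of the bordification $\widetilde Y$ of the symmetric space. First, $\widetilde Y$ is contractible and $\Gamma$ acts freely on it, so $Y=\Gamma\backslash\widetilde Y$ is aspherical. Second, every nonempty face $Y_T$ is the image of a boundary stratum closure $\overline{e(Q)}\subseteq\widetilde Y$, where $Q=\bigcap_{s\in T}P_s$ is a rational parabolic subgroup; the subspace $\overline{e(Q)}$ is itself contractible --- being a twisted Borel--Serre compactification of a locally symmetric space of strictly smaller $\Q$-rank --- hence connected, and since $\Gamma$ acts freely on $\widetilde Y$ and $\overline{e(Q)}$ is preserved precisely by the stabilizer $\Gamma_Q:=\Gamma\cap Q$, the quotient $Y_T=\Gamma_Q\backslash\overline{e(Q)}$ is connected and aspherical with $\pi_1(Y_T)=\Gamma_Q$. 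Third, the face inclusion $Y_T\hookrightarrow Y$ is covered by $\overline{e(Q)}\hookrightarrow\widetilde Y$, so on fundamental groups it is the inclusion of arithmetic subgroups $\Gamma_Q\hookrightarrow\Gamma$, which is injective --- as is $\Gamma_Q\hookrightarrow\Gamma_{Q'}$ for $Q\subseteq Q'$. Feeding these into the criterion shows that $\mathcal{U}(W,Y)$, and hence every piecewise locally symmetric manifold, is aspherical.

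The step I expect to carry the weight of the argument is the verification of asphericity and $\pi_1$-injectivity for \emph{all} the faces rather than merely the codimension-one ones: the deeper boundary strata carry their own corner stratifications and are themselves twisted Borel--Serre compactifications of smaller-rank locally symmetric spaces, not products, so one is really exploiting the inductive, rank-lowering structure built into the bordification --- which is designed so that $\Gamma$ acts freely with contractible strata and arithmetic stabilizers. The combinatorial half --- that the face poset of $Y$ supports a reflection group for which $\mathcal{U}(W,Y)$ is a manifold --- is dealt with in Section~\ref{sec:definitions}, and granting the hypotheses above the implication is then formal.
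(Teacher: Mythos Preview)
Your approach is sound but takes a genuinely different route from the paper. The paper argues through the theory of complexes of groups: it gives $\pi_1(M)$ the structure of a complex of groups over $T_M$, equips $T_M$ with Moussong's piecewise-Euclidean metric, and checks that each local development is $\CAT(0)$ by showing that its link --- a copy of the rational spherical Tits building of the piece --- is $\CAT(1)$ via Moussong's lemma (a piecewise spherical metric flag complex with simplices of size $\geq \pi/2$ is $\CAT(1)$). Developability follows, and since $\widetilde{M}$ is then a complex of contractible fibers over a $\CAT(0)$ base, it is homotopy equivalent to that base and hence contractible. Your route via Davis's mirrored-space asphericity criterion is more direct for asphericity alone: the hypotheses (asphericity and $\pi_1$-injectivity of the faces $Y_T$) are read straight off the Borel--Serre bordification, with no $\CAT$-geometry and no complex-of-groups formalism. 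What the paper's route buys, by contrast, is that the $\CAT(0)$ structure on the development $\widetilde{T}$ is exactly the tool needed later for Theorem~\ref{piece rigidity}, where one runs fixed-point arguments for nilpotent subgroups acting on $\widetilde{T}$; so the paper is really proving asphericity as a byproduct of setting up the rigidity machinery. Both arguments ultimately rest on the contractibility of the Davis complex $\Sigma(W,S)$: in yours it is the nerve of the chamber cover of $\widetilde{\mathcal{U}}$, in the paper's it is the $\CAT(0)$ base of the complex-of-spaces decomposition.

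One technical caveat in your sketch: the assertion that each $Y_T=\bigcap_{s\in T}Y_s$ is the quotient of a \emph{single} $\overline{e(Q)}$, and hence connected, is not automatic. In a nice manifold with corners the intersection of two codimension-one closed strata can be a disjoint union of several codimension-two strata, and in the Borel--Serre compactification this corresponds to there being more than one $\Gamma$-conjugacy class of rational parabolics $Q$ lying in the two prescribed maximal-parabolic classes. The criterion still applies once you handle components separately --- each component is an aspherical $\Gamma_Q\backslash\overline{e(Q)}$ with $\Gamma_Q\hookrightarrow\Gamma$ injective --- but the bookkeeping is a little more delicate than your identification $Y_T\cong\Gamma_Q\backslash\overline{e(Q)}$ suggests.
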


One of the main steps in proving asphericity of piecewise locally symmetric manifolds is to prove that their fundamental groups are nonpositively curved complexes of groups.  There have not been so many examples of nonpositively curved complexes of groups of dimension $\geq 2$. Piecewise locally symmetric manifolds provide many new examples of nonpositively curved complexes of groups.
\newline

\textbf{(Non)rigidity.}
Since piecewise locally symmetric manifolds are made of pieces that are strongly rigid by Mostow-Prasad-Margulis rigidity, one natural question to ask is if or to what extent rigidity holds for this class of manifolds. In particular, we address the following question.

For $M$ be a manifold. We denote by $\Homeo(M)$ the group of self-homeomorphisms of $M$ and by $\Homeo_0(M)$ the group of self-homeomorphisms of $M$ that are homotopic (not necessarily isotopic) to the identity map. Hence, $\Homeo(M)/\Homeo_0(M)$ is the group of self-homeomorphisms of $M$ up to homotopy. The action of a homeomorphism on $\pi_1(M)$ induces a natural homomorphism 

\begin{equation}\label{*}
\eta \colon \Homeo(M)/\Homeo_0(M) \longrightarrow \Out(\pi_1(M)).
\end{equation}
If $M$ is aspherical, then $\eta$ is always an injection since $\Out(\pi_1(M))$ is canonically isomorphic to the group of self homotopy equivalences of $M$ up to homotopy. One can ask if $\eta$ is a surjection, i.e. if a homotopy equivalence of $M$ is homotopic to a homeomorphism.

It is known for a number of classes of manifolds that $\eta$ is an isomorphism. These include closed surfaces, by the Dehn-Nielsen-Baer Theorem; infra-nilmanifolds, by Auslander \cite{Auslander};  finite-volume, complete, irreducible, locally symmetric, nonpositively curved manifold of dimension greater than $2$, by Mostow Rigidity; closed, nonpositively curved manifolds of dimension greater than $4$, by Farrell and Jones \cite{FJ2}, and for solvmanifolds, by work of Mostow \cite{Mostow}.

\begin{theorem}[Rigidity/Nielsen realization]\label{Mostow}
Let $M$ be a piecewise locally symmetric manifold of dimension $n > 2$. Assume that the pieces of $M$ are irreducible. Then any homotopy equivalence of $M$ is homotopic to a homeomorphism. Moreover, $\Out(\pi_1(M))$ can be realized as a group of homeomorphisms. That is, there is an injective homomorphism $\rho \colon \Out(\pi_1(M)) \longrightarrow \Homeo(M)$ such that 
\[ \xymatrix{
& & \Homeo(M) \ar[d]^p  \\
 \Out(\pi_1(M)) \ar[urr]^\rho &  & \Homeo(M)/\Homeo_0(M)\ar[ll]_\eta  }. \]
\end{theorem}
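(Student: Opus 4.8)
The plan is to reduce the statement to Mostow--Prasad--Margulis rigidity on the individual pieces, after first establishing that the decomposition of $M$ into locally symmetric pieces is intrinsic to $\pi_1(M)$. Write $M$ for the output of the reflection group trick applied to a Borel--Serre compactification $\overline{X}=\overline{X_1}\sqcup\cdots\sqcup\overline{X_m}$ and a Coxeter group $W$ (or a torsion-free finite-index subgroup of it), and recall from the proof of Theorem~\ref{asphericity} that $\pi_1(M)$ is the fundamental group of a developable complex of groups $\mathcal G$ whose chamber vertex groups are the lattices $\Gamma_i=\pi_1(X_i)$ and whose groups along the codimension-$k$ corners are the rational parabolic subgroups $P_\sigma\le\Gamma_i$ realizing the Borel--Serre corners $\partial_\sigma\overline{X_i}$. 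Geometrically, the universal cover $\widetilde M$ is tiled by ``pieces'' --- copies of $\widetilde{\overline{X_i}}$, on which the corresponding conjugates of $\Gamma_i$ act cocompactly --- glued along ``walls'', namely copies of the $\widetilde{\partial_\sigma\overline{X_i}}$, with the codimension-one walls separating $\widetilde M$.

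\textit{Step 1: invariance of the piece/corner decomposition.} I would show that every $\phi\in\Aut(\pi_1 M)$ permutes, up to inner automorphisms, the finite collection of conjugacy classes of the piece subgroups $\{\Gamma_i\}$, and hence --- by intersecting adjacent conjugates --- the conjugacy classes of the corner subgroups $\{P_\sigma\}$. Equivalently, the $\pi_1(M)$-equivariant decomposition of $\widetilde M$ into pieces and walls is recoverable from $\widetilde M$, and from $\pi_1(M)$, alone. The key inputs are: each $\Gamma_i$ has finite outer automorphism group (Mostow--Prasad--Margulis, using $\dim X_i\ge 3$ and irreducibility) and is a virtual duality group of dimension $\dim X_i-\mathrm{rk}_{\Q}\Gamma_i$, whereas every proper parabolic subgroup of $\Gamma_i$ has infinite outer automorphism group; and the $P_\sigma$ are precisely the peripheral subgroups of the $\Gamma_i$ in the sense of Borel--Serre. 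From these one characterizes the $\Gamma_i$ as (representatives of the conjugacy classes of) the maximal lattice-like subgroups of $\pi_1(M)$, and the walls and corners as their mutual intersections. When the pieces have $\Q$-rank one the walls are $\pi_1$-injective infranilmanifolds and the corner subgroups are exactly the maximal virtually nilpotent subgroups of $\pi_1(M)$, which makes the detection transparent; for higher $\Q$-rank one induces on the rank, using the iterated-bundle structure of the Borel--Serre corners.

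\textit{Step 2: from a homotopy equivalence to a homeomorphism.} Let $f\colon M\to M$ be a homotopy equivalence and $\phi=f_\ast$. By Step 1, and since the combinatorial symmetries of the construction are already realized by homeomorphisms of $M$ (the $W$-action together with isometries between isomorphic pieces), we may compose $f$ with a homeomorphism so that $\phi$ preserves every conjugacy class of $\Gamma_i$ and $P_\sigma$; hence $\phi$ induces automorphisms $\phi_i\in\Aut(\Gamma_i)$ carrying peripheral subgroups to conjugates of peripheral subgroups. Since $\dim X_i=n>2$ and each $X_i$ is irreducible, Mostow--Prasad--Margulis rigidity gives a unique isometry $g_i$ of $X_i$ inducing $\phi_i$, and $g_i$ extends canonically to a homeomorphism $\overline{g_i}$ of $\overline{X_i}$ matching the action of $\phi$ on the peripheral structure. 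A standard straightening argument homotopes $f$ to a map respecting the decomposition, and then asphericity of $\overline{X_i}$ rel.\ its boundary homotopes $f$, piece by piece, to the $\overline{g_i}$. It remains to make the $\overline{g_i}$ agree along the shared corners of $M$: a codimension-one corner $V$ is a closed aspherical manifold --- an iterated nilmanifold-by-locally-symmetric bundle --- whose self-homeomorphisms are rigid up to homotopy and are determined by the conjugacy class of $\phi|_{P_V}$, so after choosing collar neighborhoods of the corners and arranging the homotopies above to be stationary outside these collars, the $\overline{g_i}$ can be adjusted to coincide on each $V$. Gluing produces a homeomorphism $h$ with $h_\ast$ conjugate to $\phi$, whence $h\simeq f$; together with the injectivity of $\eta$ for the aspherical $M$ noted above, this shows $\eta$ is an isomorphism.

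\textit{Step 3: Nielsen realization.} To build the splitting $\rho$ I run Step 2 canonically. After fixing compatible basepoints and orientations, the isometries $g_i$ and the affine identifications of the corners produced above are \emph{unique}; the combinatorial symmetries are realized through honest group actions; and an isometry of a piece homotopic to the identity rel.\ its peripheral structure is the identity (the $\Gamma_i$ being centerless). These rigidities make $\phi\mapsto h$ a well-defined homomorphism $\Aut(\pi_1 M)\to\Homeo(M)$ that sends inner automorphisms to the identity, so it descends to an injective $\rho\colon\Out(\pi_1 M)\to\Homeo(M)$ with $\eta\circ p\circ\rho=\mathrm{id}$, as required. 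I expect Step 1 to be the main obstacle: ruling out automorphisms of $\pi_1(M)$ that mix a piece with a corner or permute the corner hierarchy is delicate precisely because the higher-$\Q$-rank Borel--Serre corners are themselves non-amenable, so the clean ``amenable wall'' argument available in $\Q$-rank one must be replaced by an induction on the rank or by a genuinely coarse-geometric description of the pieces; the remaining steps are essentially standard once the decomposition is known to be rigid.
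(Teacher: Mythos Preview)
Your outline has the right overall shape, but there are two genuine gaps.

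\textbf{Step 1.} Your proposed intrinsic characterization of the piece subgroups (``maximal lattice-like'', comparison of $\Out$, duality dimension) is not made precise, and it is not clear it can be. The paper's mechanism is entirely different: it uses the $\CAT(0)$ complex $\widetilde T$ on which $\pi_1(M)$ acts, and shows (Lemma~\ref{pi_1 piece rigidity}) that any injected lattice $G_v$ of $\Q$-rank $\ge 2$ must fix a vertex of $\widetilde T$. The argument is geometric: by Farb's theorem, $G_v$ is virtually generated by nilpotent subgroups $\Gamma_1,\Gamma_2,\Gamma_3$ each of which has a nonempty fixed set in $\widetilde T$; a link-by-link analysis of the piecewise spherical $\CAT(1)$ geometry (Lemmas~\ref{pre-fundamental lemma} and~\ref{fundamental lemma}) forces the three fixed sets to intersect. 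No algebraic characterization of the $\Gamma_i$ inside $\pi_1(M)$ is attempted or needed.

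\textbf{Steps 2--3.} You miss the twist phenomenon, which is the heart of the matter. After Step~1 and Mostow rigidity you do get isometries $g_i\colon M_i\to M_{\alpha(i)}$, but (i) their restrictions to a shared codimension-one stratum $S$ need only be \emph{homotopic}, not equal, and (ii) even once they are made to agree, the glued homeomorphism may realize $\phi$ only up to an element of the free abelian group $\mathcal T(M)\lhd\Out(\pi_1(M))$ of twists (Theorem~\ref{Out(pi_1(M))}). Your Step~3 asserts that ``the affine identifications of the corners produced above are unique''; this is false precisely because the center of $\pi_1(S)$ can be nontrivial, and two gluings differing by a central element give distinct outer automorphisms. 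The paper handles both (i) and (ii) simultaneously via Theorem~\ref{cusp isotopy}: any $\pi_1(S)$-equivariant isometry of $\widetilde S$ homotopic to the identity lies in the center of the unipotent radical $N_P$ (this uses Property~(b), that $Z(M_PN_P)\subset N_P$), and can therefore be joined to the identity by the straight-line path in logarithmic coordinates of $N_P$; this path is $\pi_1(S)$-equivariant, so it descends to an isotopy of $S$ which is then inserted in a collar. For the Nielsen realization, the paper thickens each codimension-$k$ stratum by a tube $P\times[0,1]^k$ and uses these canonical straight-line isotopies on the tubes, which is what makes $\rho$ a homomorphism. Without an argument producing such isotopies (and not merely homotopies) your gluing in Step~2 does not yield a homeomorphism, and your $\rho$ in Step~3 is not well defined on the twist subgroup.
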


That $\Out(\pi_1(M))$ can be realized as group of homeomorphisms of $M$ means that there is a global solution to the Nielsen Realization problem. In general, one does not expect such a global solution. Morita \cite{Morita} proved that if $M$ is a surface of genus greater than $17$, the group $\Out(\pi_1(M))$ does not lift to $\Homeo(M)$.   

In proving Theorem \ref{Mostow}, we obtain the following rigidity property of the decomposition of $M$ into locally symmetric pieces.

\begin{theorem}\label{piece rigidity}
Let $M$ be a piecewise locally symmetric manifold of dimension $n > 2$, and let $M_i$, $i\in I$, be the locally symmetric pieces in the decomposition of $M$. Suppose that $M_i$ are irreducible and have $\Q$-rank $> 1$. Let $f \colon M_i \longrightarrow M$ be a $\pi_1$-injective map. Then $f$ is homotopic to a map $g \colon M_i \longrightarrow M$ that is a diffeomorphism onto a piece $M_j \subset M$. 
\end{theorem}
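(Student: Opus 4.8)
The plan is to transfer the statement to fundamental groups, to locate $f_{*}(\pi_{1}(M_{i}))$ inside the complex-of-groups structure carried by $\pi_{1}(M)$ using the strong fixed-point properties of higher-rank arithmetic lattices, and then to promote the resulting algebraic conclusion to a diffeomorphism via Mostow--Prasad rigidity.

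Set $\Gamma_{i}=\pi_{1}(M_{i})$. Since $M_{i}$ is a compact aspherical manifold with corners and $M$ is aspherical by Theorem~\ref{asphericity}, $f$ is determined up to homotopy by the conjugacy class of $f_{*}\colon \Gamma_{i}\to \pi_{1}(M)$, so it suffices to understand this homomorphism. In the proof of Theorem~\ref{asphericity} one equips $\pi_{1}(M)$ with the structure of a nonpositively curved, hence developable, complex of groups; let $\Sigma$ be its $\CAT(0)$ development, on which $\pi_{1}(M)$ acts cocompactly and without inversions. Two structural facts will be used. First, because the complex of groups is modelled on the mirrored chamber $\overline{M_{j}}$ of each piece, every cell of $\Sigma$ is a face of a chamber cell whose stabilizer is a conjugate of a piece group $\Gamma_{j}=\pi_{1}(M_{j})$, and tracing the face relations one finds that the stabilizer of \emph{every} cell is contained in some conjugate $g\Gamma_{j}g^{-1}$. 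Second, since the action is without inversions, a subgroup fixing a point of $\Sigma$ fixes pointwise the cell containing that point in its interior, hence lies in that cell's stabilizer.

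The crux is to show that $f_{*}(\Gamma_{i})$ has a bounded orbit in $\Sigma$, equivalently (by the circumcenter argument in the complete $\CAT(0)$ space $\Sigma$) a global fixed point. This is where the hypotheses enter: $M_{i}$ irreducible of $\Q$-rank $>1$ forces $\Gamma_{i}$ to be an irreducible lattice of real rank $\geq 2$, hence arithmetic, and a Witt-index computation rules out rank-one simple factors in the ambient semisimple group, so $\Gamma_{i}$ has Kazhdan's property (T); moreover, by the Margulis normal subgroup theorem every normal subgroup of $\Gamma_{i}$ is finite or of finite index. I would combine property (T) with the Coxeter-group combinatorics underlying the construction. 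The reflection-group-trick uses a Coxeter group $W$, and the folding map to its Davis complex gives a surjection $\pi_{1}(M)\twoheadrightarrow W'$ onto a torsion-free finite-index subgroup of $W$; since Coxeter groups act properly on $\CAT(0)$ cube complexes (Niblo--Reeves) and hence are a-T-menable, the image of the property-(T) group $\Gamma_{i}$ in $W'$ is finite, hence trivial, so $f_{*}(\Gamma_{i})$ is confined to $N=\ker(\pi_{1}(M)\to W')$, the subgroup assembled from the piece groups and their Borel--Serre-face subgroups. There a $\CAT(0)$/Bass--Serre argument applies: property (T) (equivalently property FA) together with the Margulis normal subgroup theorem ruling out the amenable-by-(smaller lattice) cell stabilizers forces a fixed point. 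It then follows that $f_{*}(\Gamma_{i})\subseteq g\Gamma_{j}g^{-1}$ for some piece $M_{j}$, and after a free homotopy we may assume $f_{*}(\Gamma_{i})\leq \Gamma_{j}$ for an actual piece $M_{j}\subseteq M$; thus $f$ is homotopic to a map factoring as $M_{i}\xrightarrow{\ \bar f\ }M_{j}\hookrightarrow M$.

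Finally I would show $\bar f_{*}\colon \Gamma_{i}\to \Gamma_{j}$ is an isomorphism realized by a diffeomorphism. An infinite-index inclusion is excluded by Margulis superrigidity (it would extend to a Lie-group morphism $G_{i}\to G_{j}$ realizing $f_{*}(\Gamma_{i})$ as a lattice in a proper semisimple subgroup of $G_{j}$, incompatible with the complex-of-groups data), and a proper finite-index inclusion by Mostow--Prasad rigidity together with the fact that no piece group of the decomposition is conjugate into a proper subgroup of another (otherwise $M_{i}$ would be a nontrivial finite cover of $M_{j}$). Hence $\bar f_{*}$ is an isomorphism, and since $\dim M_{i}=\dim M=n>2$ with both pieces irreducible locally symmetric of noncompact type, Mostow--Prasad strong rigidity shows it is induced by an isometry of the associated symmetric spaces (up to rescaling of irreducible factors), hence by a diffeomorphism of the open locally symmetric manifolds, which extends by functoriality of the Borel--Serre compactification to a diffeomorphism $M_{i}\to M_{j}$; $\bar f$ is homotopic to it, giving the desired $g$. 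I expect the bounded-orbit step to be the main obstacle; the equality-of-image step is a secondary point that relies on the decomposition being reduced.
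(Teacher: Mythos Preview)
Your overall strategy---reduce to fundamental groups, locate a fixed point in the $\CAT(0)$ development $\widetilde{T}$, then invoke Mostow--Prasad rigidity and the Borel--Serre functoriality---matches the paper's. The divergence, and the gap, is at the fixed-point step.

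You propose to use Property~(T): first the retraction $\pi_1(M)\twoheadrightarrow W'$ together with a-T-menability of Coxeter groups forces $f_*(\Gamma_i)\subseteq N=\ker(\pi_1(M)\to W')$, and then ``property~FA together with the Margulis normal subgroup theorem'' is supposed to produce a fixed point on $\widetilde{T}$. The first reduction is correct but buys almost nothing: $N$ still acts on the same higher-dimensional $\CAT(0)$ polyhedral complex $\widetilde{T}$, and the question of a bounded orbit is unchanged. The second step is where the argument breaks. Property~(T) (equivalently~FA) yields fixed points on trees and on $\CAT(0)$ cube complexes, but $\widetilde{T}$ is neither in general---its cells are Coxeter polytopes coming from the Davis-complex metric, not cubes. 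There is no general theorem that a Property~(T) group acting by semisimple isometries on a finite-dimensional $\CAT(0)$ polyhedral complex has a bounded orbit, and the normal subgroup theorem does not supply the missing geometric input. You flag this as ``the main obstacle''; it is in fact the entire content of the argument.

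The paper handles this step by a completely different mechanism. It invokes Farb's theorem that an irreducible lattice of $\Q$-rank $\geq 2$ is virtually generated by nilpotent subgroups $\Gamma_1,\dots,\Gamma_{r+1}$ such that any proper subfamily generates a nilpotent group with nonempty fix set in any $\CAT(0)$ space on which it acts semisimply. After reducing to three such subgroups, the paper runs a Helly-type argument on the three convex fix sets $F_1,F_2,F_3\subset\widetilde{T}$: the pairwise intersections are nonempty by Farb's theorem, and two link lemmas about $\CAT(1)$ piecewise spherical complexes with simplices of size $\geq\pi/2$ (Lemmas~\ref{pre-fundamental lemma} and~\ref{fundamental lemma}) are used to show the triple intersection is nonempty as well. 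This is combinatorial geometry specific to $\widetilde{T}$, not a formal consequence of any representation-theoretic property of $\Gamma_i$. Your endgame (co-Hopficity via superrigidity, extension of the isometry over the Borel--Serre boundary) is fine and agrees with the paper, but to make the proposal go through you would need to replace the Property~(T) sketch with an argument of this kind, or else supply a genuine fixed-point theorem for Property~(T) groups on Coxeter-type $\CAT(0)$ complexes.
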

We believe that the above theorem is true for the case where the pieces have $\Q$-rank $1$, but the technique we use to prove the Theorem \ref{piece rigidity} does not apply to the $\Q$-rank $1$ case. The key in the proof of this theorem is to use nonpositively curved complexes of groups and apply a theorem of Farb in \cite{Farb} on generation of lattices with $\Q$-rank $\geq 2$ by certain nilpotent subgroups. In some way, the mechanism for rigidity is the same as for asphericity, i.e. both are governed by the nonpositively curved complex (of smaller dimension than $M$) given by the complex of groups structure of $\pi_1(M)$.  

Theorem \ref{piece rigidity} means that the embedding of each piece $M_i$ in $M$ is rather rigid up to homotopy.  It follows that the decomposition of $M$ into locally symmetric pieces is unique. In other words, up to homotopy there is only one way to tile $M$ by locally symmetric pieces, and that any homotopy equivalence of $M$ must preserve the decomposition. This is actually the first step in proving Theorem \ref{Mostow}. Pushing to homeomorphisms is the other step.

Despite the rigidity that Theorem \ref{piece rigidity} seems to suggest, there is a nonrigidity aspect of piecewise locally symmetric manifolds that appears in the structure of $\Out(\pi_1(M))$. The group $\Out(\pi_1(M))$ contains a (possibly trivial) free abelian normal subgroup $\mathcal{T}(M)$ whose non-identity elements are called \emph{twists} (see below for the exact definition). These are analogous to Dehn twists in surface topology. It turns out that there are examples where $\mathcal{T}(M)$ is nontrivial. Thus, $\Out(\pi_1(M))$ can be infinite, unlike the locally symmetric case, for which the outer automorphism group of the fundamental group is finite.

\begin{theorem}[Computation of $\Out(\pi_1(M))$]\label{Out(pi_1(M))}
Let $M$ be a piecewise locally symmetric manifold and the pieces in the decomposition of $M$ are irreducible locally symmetric manifolds. Then the outer automorphism group $\Out(\pi_1(M))$ is an extension of a free abelian group $\mathcal{T}(M)$ by a group $\mathcal{A}(M)$, i.e. the following sequence is exact.
\[1 \longrightarrow \mathcal{T}(M) \longrightarrow \Out(\pi_1(M)) \longrightarrow \mathcal{A}(M) \longrightarrow 1.\] 
If $M$ has finitely many pieces, then $\mathcal{T}(M)$ is finitely generated and $\mathcal{A}(M)$ is finite.
\end{theorem}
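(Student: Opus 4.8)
The plan is to extract $\Out(\pi_1(M))$ from the complex-of-groups structure that $\pi_1(M)$ carries, in the same way one computes the outer automorphism group of the fundamental group of a graph of groups. Write $\Gamma_i=\pi_1(M_i)$ for the locally symmetric pieces; these are the chamber (vertex) groups of a developable, nonpositively curved complex of groups $\mathcal{G}$ over the Davis chamber, and the group attached to a stratum is $\pi_1$ of the corresponding Borel--Serre face, i.e. $\Gamma_i\cap P$ for a rational parabolic $P$ of $\Gamma_i$, the codimension $1$ strata (``walls'') corresponding to the maximal rational parabolics. I would build the exact sequence from the single homomorphism
\[ \Phi \colon \Out(\pi_1(M)) \longrightarrow \Aut(\mathcal{G}) \]
to the group of combinatorial automorphisms of $\mathcal{G}$ (an automorphism of the Davis chamber together with compatible outer automorphisms of the local groups), putting $\mathcal{A}(M):=\im\Phi$ and $\mathcal{T}(M):=\Ker\Phi$, so that $1\to\mathcal{T}(M)\to\Out(\pi_1(M))\to\mathcal{A}(M)\to 1$ is exact by construction. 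It then remains to prove: (i) $\Phi$ is well defined, i.e. every automorphism of $\pi_1(M)$ respects the decomposition; (ii) $\Ker\Phi$ is free abelian; (iii) the two finiteness statements.

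For (i) and the finiteness of $\mathcal{A}(M)$, I would invoke the rigidity of the pieces. When the pieces have $\Q$-rank $>1$ this is exactly Theorem \ref{piece rigidity}: a $\pi_1$-injective map of a piece into $M$ is homotopic onto a piece, so $\phi(\Gamma_i)$ is conjugate to some $\Gamma_j$ for every $\phi\in\Aut(\pi_1(M))$; the face subgroups are then recovered canonically as the peripheral subgroups of the $\Gamma_i$, so they are permuted too, and incidence is preserved because it is read off from intersections of conjugates. (For pieces of $\Q$-rank $1$, where Theorem \ref{piece rigidity} is unavailable, one argues instead that the $\Gamma_i$ are precisely the maximal subgroups of $\pi_1(M)$ acting cocompactly on a chamber of the developed complex, using the nonpositively curved structure.) In all cases Mostow--Prasad rigidity makes each $\Out(\Gamma_i)$ finite, and each $\overline{M_i}$ has only finitely many Borel--Serre faces; hence if $M$ has finitely many pieces the Davis chamber is a finite complex and $\Aut(\mathcal{G})$ is finite, so $\mathcal{A}(M)$ is finite.

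Step (ii) is the crux. Given $\phi\in\Ker\Phi$, after composing with an inner automorphism of $\pi_1(M)$ I may assume $\phi$ restricts to the identity on a chosen base chamber group $\Gamma_{i_0}$, and I then propagate across the developed complex chamber by chamber. Crossing a wall $N$ from a chamber $C$ to the adjacent chamber $C'$, the automorphism is already determined on the wall group $\Gamma_N=\pi_1(N)$, it carries the chamber group of $C'$ onto itself, and it restricts there to conjugation by an element of the centralizer $Z_{\pi_1(M)}(\Gamma_N)$. The key computation is that this centralizer equals the centre $Z(\Gamma_N)$ of the wall group, and that for a maximal rational parabolic $P$ of $\Gamma_i$,
\[ Z(\Gamma_i\cap P)\ \subseteq\ \Gamma_i\cap Z(N_P), \]
where $N_P$ is the unipotent radical of $P$ and $Z(N_P)$ its centre (a vector group): indeed $\Gamma_i\cap P$ is Zariski dense in the product of the derived group of the Levi of $P$ with $N_P$ (its image in the maximal split central torus of the Levi is finite), and the centralizer of that subgroup in the ambient semisimple group lies in $Z(N_P)$. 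Hence these centres are finitely generated free abelian groups, namely the lattice points of the subspace of $Z(N_P)$ fixed by the derived Levi. Assembling this over all walls and imposing, at each codimension $\ge 2$ stratum, the condition that the twists along the walls through it agree on the smaller face group, one identifies $\mathcal{T}(M)$ with a group of ``twist cocycles'' valued in a direct sum of the free abelian groups $Z(\Gamma_N)$; such a group is torsion-free abelian, and finitely generated when there are finitely many walls, hence free abelian of finite rank. That $\mathcal{T}(M)$ can be nonzero is seen by taking as a piece a Siegel modular variety and looking at its Klingen parabolic, for which the derived Levi is a symplectic group acting trivially on the one-dimensional centre of the Heisenberg unipotent radical, so that $Z(\Gamma_i\cap P)\cong\Z$; one then checks that the resulting twist survives the compatibility conditions.

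The step I expect to be the main obstacle is the propagation and bookkeeping in (ii): the computation for graphs of groups must be carried out for a genuine complex of groups, and pinning down the compatibility conditions at the codimension $\ge 2$ strata — and verifying that the resulting twist cocycles form a group isomorphic to $\Ker\Phi$ with no hidden relations or obstructions — amounts to a (small) cohomological computation over the Davis chamber. A secondary difficulty is step (i) for pieces of $\Q$-rank $1$, where Theorem \ref{piece rigidity} does not apply and the canonicity of the decomposition must be established directly from the geometry of the nonpositively curved complex of groups.
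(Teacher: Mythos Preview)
Your proposal is correct and follows essentially the same route as the paper: define a homomorphism out of $\Out(\pi_1(M))$ using Theorem~\ref{irr vertex group} together with Mostow rigidity (the paper targets $\Isom(\mathcal{M})$ rather than $\Aut(\mathcal{G})$, but these carry the same information), set $\mathcal{A}(M)$ to be the image and $\mathcal{T}(M)$ the kernel, and then analyse the kernel by the chamber-by-chamber propagation you describe, reducing to central elements of the wall groups. The only cosmetic difference is that the paper phrases the abelianness of $\mathcal{T}(M)$ as ``twists commute pairwise'' (because their supports are either disjoint or the loops lie in the centre of a common highest-codimension cell group) rather than as your cocycle description, and it handles the $\Q$-rank $1$ case of step~(i) by citing companion papers rather than by the direct geometric argument you sketch.
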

Even though Nielsen realization holds for piecewise locally symmetric manifolds, not every such manifold admits a Riemannian metric such that $\Out(\pi_1(M)) \cong \Isom(M)$. This is in contrast to the case of Mostow-Prasad-Margulis rigidity. 
\begin{theorem}\label{weak rigidity}
Let $M$ be a closed, piecewise locally symmetric manifolds of dimension $\geq 3$. If $\Out(\pi_1(M))$ is infinite, then $M$ does not admit any metric such that $\Out(\pi_1(M))$ can be realized as a group of isometries.
\end{theorem}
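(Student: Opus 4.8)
The plan is to play the size of the isometry group of a closed manifold off against the size of $\Out(\pi_1(M))$. Suppose, toward a contradiction, that $M$ carries a Riemannian metric $g$ for which $\Out(\pi_1(M))$ is realized as a group of isometries. Recalling that, $M$ being aspherical, $\Out(\pi_1(M))$ is canonically the group of homotopy self-equivalences of $M$ up to homotopy, and reading ``realized as isometries'' in the spirit of the Nielsen realization statement of Theorem~\ref{Mostow}, this should mean that the natural homomorphism $\mu\colon\Isom(M,g)\to\Out(\pi_1(M))$, $\phi\mapsto[\phi_*]$, is surjective (indeed that it admits a section). I would then show that $\mu$ cannot have infinite image, which forces $\Out(\pi_1(M))$ to be finite --- contrary to hypothesis.

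To see that $\mu$ has finite image, recall that since $M$ is closed, $G:=\Isom(M,g)$ is a compact Lie group by the Myers--Steenrod theorem, so $G$ has only finitely many connected components; let $G^0$ be the identity component. Every $\phi\in G^0$ is homotopic to $\id_M$: being a connected Lie group, $G^0$ is path connected, so there is a path of isometries $t\mapsto\phi_t$ from $\id_M$ to $\phi$, and $(x,t)\mapsto\phi_t(x)$ is such a homotopy; hence $\phi_*$ is an inner automorphism of $\pi_1(M)$ and $\mu(\phi)=1$. Thus $\mu$ factors through the finite group $G/G^0$ and therefore has finite image, as claimed. (If instead one reads ``realized as a group of isometries'' in the weakest sense --- as an abstract isomorphism $G\cong\Out(\pi_1(M))$ --- the contradiction is even quicker: $\pi_1(M)$ is finitely generated, so $\Out(\pi_1(M))$ is countable, whereas an infinite compact Lie group has positive dimension and is therefore uncountable.) I would also remark that, by Theorem~\ref{Out(pi_1(M))} applied to $M$ --- which has finitely many pieces since it is closed --- the hypothesis that $\Out(\pi_1(M))$ be infinite is equivalent to the free abelian twist group $\mathcal{T}(M)$ being infinite, so the theorem says precisely that the presence of twists obstructs the existence of a metric with $\Isom(M,g)\cong\Out(\pi_1(M))$.

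Because these arguments use only soft input --- Myers--Steenrod together with ``isometries near the identity are homotopic to the identity'' --- the only delicate point in this form of the statement is pinning down exactly what ``realized as a group of isometries'' is meant to assert. The genuinely hard version is the stronger claim that $M$ admits \emph{no} metric with a positive-dimensional isometry group at all: such a group would contain a circle acting effectively on the closed aspherical manifold $M$, so by the Conner--Raymond theorem on injective toral actions $Z(\pi_1(M))$ would be infinite, and one would be reduced to proving that $Z(\pi_1(M))$ is trivial (equivalently finite, as $\pi_1(M)$ is torsion free). \emph{This} is the step I would expect to fight hardest for: I would analyse a hypothetical nontrivial central element $z$ through its action on the $\CAT(0)$ development of the nonpositively curved complex of groups structure on $\pi_1(M)$, where $z$ must act as a hyperbolic isometry; its minimal set would then split off an $\R$-factor preserved by all of $\pi_1(M)$, yielding a homomorphism $\pi_1(M)\to\R$ nontrivial on $z$, and one would have to use the higher-rank behaviour of the lattice vertex groups (property (T), hence finite abelianization) to derive a contradiction.
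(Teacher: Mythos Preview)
Your first argument is correct and clean: Myers--Steenrod makes $G=\Isom(M,g)$ a compact Lie group, elements of the identity component $G^0$ are isotopic (hence homotopic) to $\id_M$ and so map trivially under $\mu\colon G\to\Out(\pi_1(M))$, whence $\mu$ factors through the finite group $G/G^0$; a Nielsen-style section $\rho$ with $\mu\circ\rho=\id$ then forces $\Out(\pi_1(M))$ to be finite, contradicting the hypothesis. This already proves the theorem as stated.

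The paper does not write out a separate proof of this theorem, but it does prove the key ingredient for the stronger Conner--Raymond route you describe: Lemma~\ref{centerless pi_1} shows that $\pi_1(M)$ has trivial center. Combined with Conner--Raymond this gives the sharper conclusion that $\Isom(M,g)$ is finite for \emph{every} Riemannian metric on $M$, and the theorem follows a fortiori. The point where your proposal diverges from the paper is in how hard you expect the centerlessness to be. You propose to analyse a hypothetical central $z$ as a hyperbolic isometry of the $\CAT(0)$ development, split off an $\R$-factor via the flat torus theorem, and then invoke property~(T) of the vertex lattices to kill the resulting homomorphism to $\R$. The paper's argument is a two-line consequence of the complex-of-groups structure: $\pi_1(M)$ acts on the $\CAT(0)$ complex $\widetilde{T}$, each vertex group (an irreducible lattice) has a \emph{unique} fixed vertex there, a central element must preserve the fixed set of every vertex group and hence fix every vertex of $\widetilde{T}$, so it is trivial. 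No property~(T), no Min-set splitting, no abelianization computation is needed. In short, the step you flagged as the one to ``fight hardest for'' is already available in the paper with an elementary proof, and with it your Conner--Raymond paragraph goes through immediately.
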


This paper is organized as follows. We recall the Borel-Serre compactifications of locally symmetric spaces in Section \ref{sec:compactifications}, and the reflection group trick in Section \ref{sec: reflection group}. We then define piecewise locally symmetric manifolds in Section \ref{sec:definitions}. We remark on the lack of $\CAT(0)$ metrics on piecewise locally symmetric manifolds, and raise a question in Section \ref{non CAT(0)}. The complex of groups structure of $\pi_1(M)$ is discussed in Section \ref{pi_1}. We prove Theorem \ref{asphericity} in Section \ref{developability}, and Theorem \ref{piece rigidity} in Section \ref{proof of piece rigidity}. We discuss the structure of $\Out(\pi_1(M))$ and prove Theorem \ref{Out(pi_1(M))} in Section \ref{Out}. Finally we prove Theorem \ref{Mostow} in Section \ref{rigidity}. There is an appendix at the end of the paper that is a summary of theory of complexes of groups that we will need to use in this paper. 

\begin{acknowledgement} I would like to thank my advisor, Benson Farb, for his guidance, directions, constant encouragement, and extensive comments on earlier versions of this paper. I would like to thank Ilya Gekhtman and Dave Witte-Morris for a lot of help on the theory of arithmetic lattices. I would also like to thank Mike Davis and Shmuel Weinberger for useful conversations.
\end{acknowledgement}

From now on, by a \emph{locally symmetric} manifold we will mean a complete, finite volume, locally symmetric, nonpositively curved manifold of noncompact type.

\section{Compactifications of locally symmetric spaces} \label{sec:compactifications}

Let $M$ be a connected, noncompact, finite-volume, complete, locally symmetric, nonpositively curved manifold of noncompact type. Then $M = \Gamma\setminus G/ K$ for some semisimple Lie group $G$ with $K$ a maximal compact subgroup of $G$ and $\Gamma$ is a torsion-free lattice of $G$ that is isomorphic to the fundamental group $\pi_1(M)$. 

If $M$ is an arithmetic manifold, then it has a compactification $\M$ called the Borel-Serre compactification of $M$ (see \cite{BS}). The space $\M$ is a compact manifold with corners whose interior is diffeomorphic to $M$. The corners of $\M$ correspond to  $\Gamma$-conjugacy classes of rational parabolic subgroups of $G$. For example, let $\Gamma$ be a torsion-free finite index subgroup of $\SL(n,\Z)$, and let
\[M = \SO(n) \backslash \SL(n,\R)/ \Gamma.\] 
Then $M$ is a noncompact, finite volume, locally symmetric manifold. Consider the case where $n = 3$. The compactification $\X$ is a manifold with corners with codimension $1$ and codimension $2$ strata. (See also Figure~\ref{SL(3,Z)} for an schematic picture of $\M$.)

\begin{figure}\label{SL(3,Z)}
\begin{center}
\includegraphics[height=100mm]{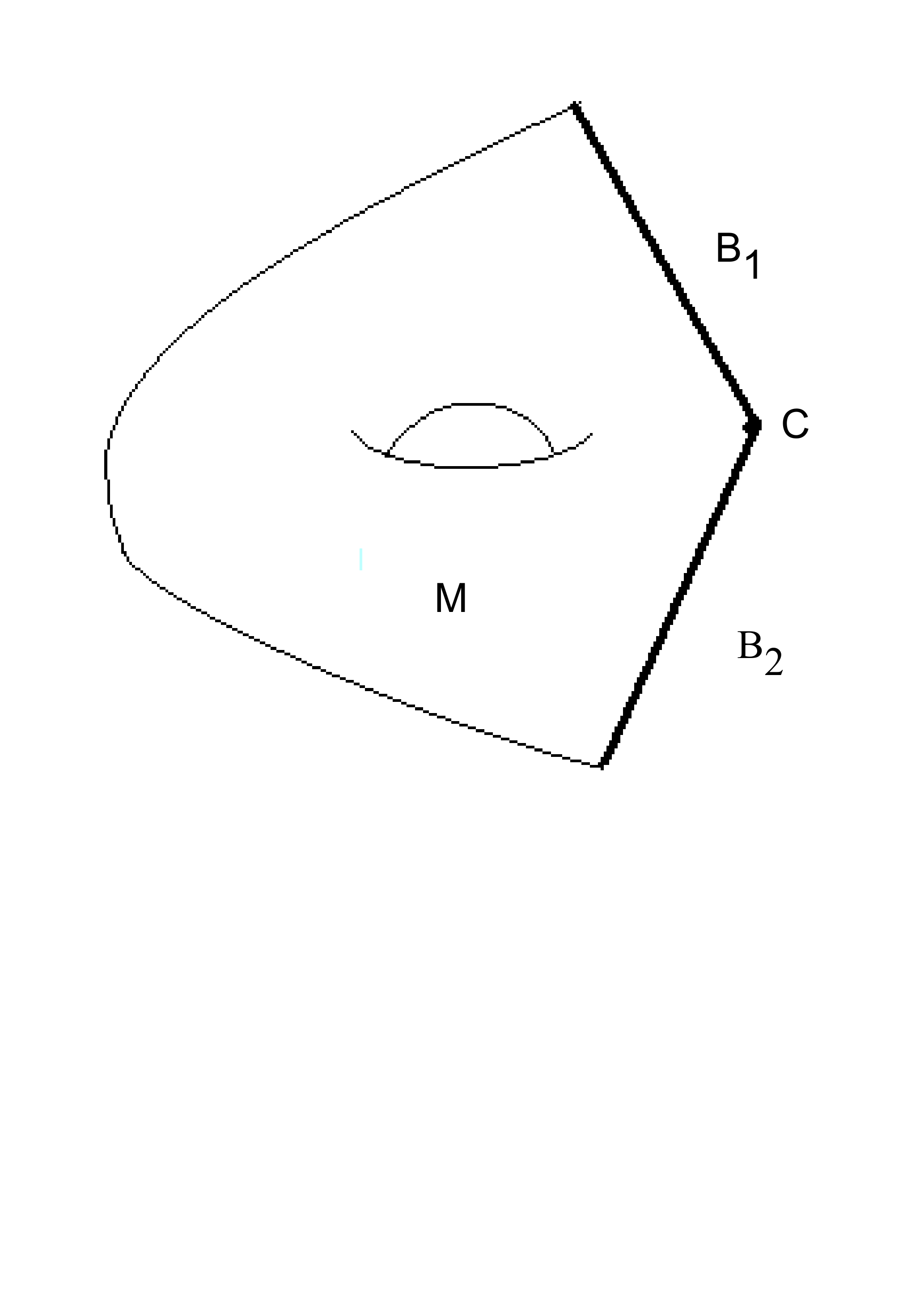}
\caption{Schematic of a manifold with corners with two codimension $1$ strata and one codimension $2$ stratum.}
\end{center}
\end{figure}  

Recall that the parabolic subgroups of $\SL(n,\R)$ are precisely the subgroups that preserve a flag. There are two kinds of codimension $1$ strata, which correspond respectively to the $\Gamma$-conjugacy classes of the following two parabolic subgroups of $\SL(3,\R)$.
\[ P_1 =\left(\begin{matrix} *&*&*\\ *&*&*\\ 0&0&* \end{matrix}\right), \qquad P_2 =\left(\begin{matrix} *&*&*\\ 0&*&*\\ 0&*&* \end{matrix}\right),\]
and one kind of codimension $2$ stratum, which corresponds to the $\Gamma$-conjugacy class of the parabolic subgroup
\[Q =\left(\begin{matrix} *&*&*\\ 0&*&*\\ 0&0&* \end{matrix}\right).\]
For each $i = 1, 2$, the codimension $1$ strata are the following spaces 
\[B_i = (P_i \cap \SO(3))\backslash P_i/ (P_i\cap \Gamma),\] 
which a $2$-torus bundle over $\SO(2)\backslash\SL(2,\R)/(\Gamma \cap\SL(2,\Z))$. The codimension $2$ strata are of the form 
\[C = (Q\cap \SO(3))\backslash Q/ (Q\cap \Gamma),\] 
which is a compact nilmanifold. 

In general, the Borel-Serre compactification of an arithmetic manifold $M$ is constructed as follows. As before, $M = K \backslash G/\Gamma$ for some semisimple, linear, connected algebraic group defined over $\Q$ with a maximal compact subgroup $K$, and $\Gamma$ is an arithmetic lattice of $G$. Let $X = G/K$, which is a symmetric space of noncompact type and the universal cover of $M$. 

Let $P$ be a rational parabolic subgroup of $G$. Let $N_P$ be the unipotent radical of $P$. The Langlands decomposition of $P$ is $P = M_PA_PN_P$, where $N = N_P$, $A_P$ is a $\Q$ split torus, and $M_P$ is a reductive subgroup of $P$ with finitely many components. The groups $M_P$, $A_P$ and $N_P$ are defined over $\Q$, and $M_PA_P = C_G(A_P)$, the centralizer of $A_P$ in $G$. The latter fact together with $N_P$ being normal in $P$ implies that $A_P$ normalizes $M_PN_P$. 

Each parabolic $P$ acts transitively on $X$. The Langlands decomposition of $P$ gives $X$ a \emph{horospherical decomposition}

\[ X = N_P \times A_P \times X_P,\]
where $X_P$ is the symmetric space corresponding to $M_P$, i.e. $X = M_P/(M_P \cap K)$. In this case $A_P$ is isomorphic to the multiplicative group $(0,\infty)^k$, for some positive integer $k$. The Borel-Serre partial compactification is 
\[ \X = \left(X \coprod_{P} X(P)\right) / \sim, \]
where $P$ runs through all rational parabolic subgroups of $G$, and $\sim$ is an equivalence relation defined as follows. For each pair of rational parabolic subgroups $P$ and $Q$, let $R$ be the smallest rational parabolic subgroup containing both $P$ and $Q$. Such $R$ always exists since $G$ is a parabolic subgroup containing $P$ and $Q$. There are embeddings (\cite[Proposition III.5.7]{Ji})
\[i_1 \colon X_R \longrightarrow  X_P, \quad \text{and} \quad i_2 \colon X_R \longrightarrow X_Q .\] 
Then for any $x \in X_R$, the points $i_1(x) \in X_P$ and $i_2(x) \in X_Q$ are defined to be equivalent.

The space $\X$ has the structure of an analytic manifold with corners. For $g \in G_\Q$ , the natural action of $g$ on $X$ extends to $\X$ (\cite[Proposition III.5.13]{Ji}). The quotient $\Gamma \backslash \X$ is a compact analytic manifold with corners, called the \emph{Borel-Serre compactification} of $\Gamma \backslash \X$, which we will denote $\M$. The manifold with corners $\M$ is compact with interior diffeomorphic to $M$.  

By the Margulis arithmeticity theorem \cite[Theorem 6.1.2]{Zimmer}, if the Lie group $G$ has $\R$-rank greater than $1$, then any irreducible lattice of $G$ is arithmetic. Hence, if $M$ is higher rank and irreducible, then $M$ always has a compactification that is a manifold with corners. 
 
If the $\R$-rank of $M$ is $1$, then $M$ may not be arithmetic, in which case $M$ is negatively curved with pinched curvature. Then $M$ has finitely many ends, or \emph{cusps}. Each cusp is diffeomorphic to $[0, \infty) \times S$ for some compact $(n-1)$--dim manifold $S$. The fundamental group of each cusp, or each \emph{cusp subgroup}, corresponds to a maximal subgroup of $\pi_1(M)$ of parabolic isometries fixing a point on the boundary at infinity of $\widetilde{M}$. The parametrization $[0,\infty) \times S$ of a cusp can be taken so that each cross section $a\times S$ is the quotient of a horosphere in $\widetilde{M}$ by the corresponding cusp subgroup. If we delete the $(b, \infty) \times S$ part of each cusp $[0, \infty) \times S$ of $M$, the resulting space is a compact manifold with boundary which we call $\M$. We can identify $M$ with the interior of $\M$. Hence, $\M$ is a compactification of $M$. We choose $b$ large enough so that the boundary components $b \times S$ of different cusps do not intersect. 

If $M$ is reducible, i.e. $M$ is finitely covered by a product $Y = \prod_{j \in J}Y_j$, for some finite collection of noncompact, irreducible, locally symmetric spaces $Y_j$, then $M$ has an obvious compactification as follows. The cover $Y$ has a compactification $\overline{Y} = \prod_{j \in J}\overline{Y_j}$. Since the cover space action on $Y$ extends to $\overline{Y}$, this gives a compactification of $M$ as the quotient of $\overline{Y}$ by the covering space action. 

\section{The reflection group trick}\label{sec: reflection group}
In \cite{Davis}, Davis constructs, for an appropriate Coxeter system $(W,S)$ and a mirrored space $X$, a space $U(W,X)$ with a proper $W$-action. We give a brief overview of this construction. (See \cite{Davis} for more detailed discussion.)

A \emph{mirrored space $X$ over $S$} is a topological space $X$ with a collection of closed subspaces $\{X_s \: | \: s \in S\}$ for some index set $S$. For each $x \in X$, let
\[ S(x) = \{ s \in S \: | \: x \in X_s\}.\]
The \emph{nerve} $N(X)$ of $X$ is a simplicial complex defined as follows. The vertex set of $N(X)$ is the set of $s \in S$. A nonempty subset $T$ of $S$ is a simplex of $N(X)$ if and only if $X_T$ is nonempty. 

Let $(W,S)$ be a Coxeter system, i.e. $W$ is has the following presentation. 
\[ W = \langle s_i \in S \: | \: (s_i)^2 = 1, (s_is_j)^{m_{ij}} = 1\rangle.\]
for some (symmetric with positive integer entry) Coxeter matrix $m_{ij}$. For each subset $T \subseteq S$, define $W_T$ to be the subgroup of $W$ that is generated by all $s\in T$. The \emph{nerve} a Coxeter system $(W,S)$ is denoted by $L(W, S)$ and is simplicial complex whose vertices correspond to $s\in S$, and $T \subset S$  is simplex if $T$ \emph{spherical subset} of $S$. (A spherical subset of $S$ is a subset whose elements generate a finite subgroup.)

Define and equivalence relation on $W\times X$ as $(h,x)\sim(g,y)$ if $x=y$ and $h^{-1}g \in W_{S(x)}$. Then $U(W,X)$ is defined to be to quotient space $(W\times X) /\sim$. It is easy to check that $W$ acts on $U(W,X)$ with quotient $X$. This action is proper if $X$ is Hausdorff and the mirror structure of $X$ is \emph{$W$-finite}, that is, for each $T \subset S$ such that $W_T$ is infinite, the intersection $\bigcap_{s\in T} X_s$ is empty. 

One can think of the space $U(W,X)$ as what they see if they are standing in a room of shape $X$ with real mirrors $X_s$'s. For example, an observer in a cubical room with one mirror on a wall will think that they are in a doubly larger room that is the union of two identical cubical rooms along the wall with the mirror. This corresponds to the case where $X$ is a cube with one of the faces a mirror and $W$ is a one-generator Coxeter system. Another example is when a $2$-dimensional observer is in a triangular room with angles $\pi/m$, $\pi/n$ and $\pi/k$ for some integers $m, n, k > 1$. In the case where $m = n = k = 3$, for example, the observer will think that they are in a flat plane tiled with copies of the room (by looking at the repetition of the same furniture in the room for example). This corresponds to the case where $X$ is a equilateral triangle with mirrors that three edges and $W$ is the $(3,3,3)$ triangle group.

\bigskip
\noindent
\textbf{Manifolds with corners:} A natural class of mirrored spaces is the class of  \emph{nice manifolds with corners}. A \emph{smooth $n$-manifold with corners} $X$ is a second countable Hausdorff space that is differentiably modelled on the standard $\R^n_+ = [0,\infty)^n$.  Given a point $x\in X$ and chart $\phi$ on a neighborhood of $x$, the number $c(x)$ of of coordinates of $\phi(x)$ is independent of the chart $\phi$. For $0 \leq k \leq n$, a connected component of $c^{-1}(k)$ is a \emph{stratum} of $X$ of codimension $k$. The closure of a stratum is \emph{closed stratum}. For each $x \in X$, let $m(x)$ be the set of closed codimension $1$ strata containing $x$. A manifold with corners $X$ is \emph{nice} if the cardinality of $m(x)$ is $2$ for all $x$ with $c(x) = 2$.

A nice manifold with corners $X$ has a natural mirror structure where each closed codimension $1$ stratum is a mirror. If $(W,S)$ is a Coxeter system and $X$ is a nice manifold with corners whose codimension $1$ strata are $C_s$ for $s\in S$ gives $X$ a mirror structure that is $W$-finite, then $U(W,X)$ is a manifold, and $W$ acts on $U(W,X)$ properly and \emph{locally linearly} (\cite[Proposition 10.1.10]{Davis}).

If $X$ is a nice manifold with corners with the natural mirror structure, then the nerve of $X$ is a \emph{convex cell complex}. There is a Coxeter system $(W,S)$ such that $L(W,S)$ is the barycentric subdivision of the nerve $N(X)$ of $X$. For example, for any convex cell complex $\Lambda$, there is a right-angled Coxeter system $(W,S)$ with nerve $b\Lambda$ (\cite[Lemma 7.2.2]{Davis}). For such a Coxeter system, the space $U(W,X)$ is a manifold. One just needs to check that the mirror structure of $X$ is $W$-finite. The space $U(W,X)$ is a differential manifolds with differential structure induced by those on each copies of $X$ in $U(W,X)$.   

The space $U(W,X)$ need not be compact even if $X$ is compact since $W$ can be infinite. However, it has a compact manifold quotient since $W$ acts on $U(W,X)$ properly with compact quotient, and $W$ has a finite index torsion-free subgroup $\Gamma$. The latter is because $W$ admits a faithful representation into some general linear group. Let $M$ be the quotient of $U(W,\X)$ by $\Gamma$. Then $M$ is a compact manifold.

\section{Piecewise locally symmetric spaces: definition and examples}\label{sec:definitions}

Let $X$ be a locally symmetric manifold and let $\overline{X}$ be the Borel-Serre compactification of $X$. Then $\overline{X}$ is a nice manifold with corners (\cite[Proposition III.5.14]{Ji}). 

Let $(W,S)$ be a Coxeter system with nerve $L(W,S)$ the barycentric subdivision of the nerve $N(\overline{X})$ of $\overline{X}$. Then $U(W,\overline{X})$ is a smooth manifold with the smooth structure induced by that of $\overline{X}$. There is a natural action of $W$ on $U(W,\X)$. We call manifolds that are quotients of $U(W,\X)$ by subgroups of $W$ \emph{piecewise locally symmetric spaces}. These manifolds need not be compact, but there are always compact ones, e.g. $U(W,\X)/\Gamma$, for some torsion-free finite index subgroups of $W$. 

We remark that one can similarly define \emph{piecewise locally symmetric manifolds with boundary} by picking $W$ to be a Coxeter group that is generated by a proper subset of the set of mirrors of $\X$.

Now, we give a few examples of piecewise locally symmetric spaces.
\begin{itemize}
\item[a)] The simplest example of a piecewise locally symmetric space is the double of a nonpositively curved, locally symmetric space $X$ with one cusp.  In this case, the Coxeter group $W$ is that with one generator since $X$ has only one boundary component.

\item[b)] Let $X$ be the product of two $\R$-rank $1$ locally symmetric spaces $X_1$ and $X_2$, each of which has one cusp. The boundary $\partial X$ is the union of $B_1 = X_1\times\partial X_2$ and $B_2 = \partial X_2 \times X_1$ along the corner $C = \partial X_1 \times \partial X_2$. So $X$ has two codimension $1$ strata $B_1$ and $B_2$, which are the mirrors of $X$, and one codimension $2$ stratum $C$. Let $W$ be the Coxeter system generated by $B_1$ and $B_2$ with the relation $(B_1 B_2)^k = 1$ for some $k >1$. That is, $W$ is the dihedral group $D_k$. Let $M$ be $U(W,X)$. Then $M$ is a piecewise locally symmetric space. Since $W$ is finite, $M$ is a compact manifold. As we will see in Section \ref{pi_1}, the fundamental group of $M$ has the structure of a $2n$-gon of groups. (See \cite{Bridson}, \cite{stallings}, or the appendix below for the theory of complexes of groups). 

\item[c)] If the spaces $X_1$ and $X_2$ in the above example have more than one cusp, i.e. the compactification $\X_1$ or $\X_2$ has more than one boundary component, then the group $W$ is in general not the reflection group of a polygon. The fundamental group of $M$ has the structure of a $2$-dimension complex of groups.

\item[d)] Let $\Gamma$ be a torsion-free, finite index subgroup of $\SL(n,\Z)$ and let $X$ be $\SO(n) \backslash \SL(n,\R)/ \Gamma$. The Borel-Serre compactification $\X$ is a manifold with corners as we saw in section \ref{sec:compactifications}. Since $\Gamma$ is a subgroup of $\SL(n,\Z)$, the set of mirrors of $X$ may contain more than $(n-1)$ but finitely many mirrors. Let $W$ be the corresponding right angle Coxeter system. The manifold $U(W,X)$ is the union of copies of $X$ glued to each other along pairs of codimension $1$ boundary strata. 
\end{itemize}

Each copy of $\X$ is called a \emph{fundamental chamber} of $M$. Note that all the pieces $M_i$ are diffeomorphic. For each piece $M_i$ of $M$, there is a retraction of $r \colon M \longrightarrow M_i \cong \X$ defined as follows. For each $x \in M$, let $\hat{x}$ be a lift of $x$ in $U(\X, W)$. Then $r(x) = p(\hat{x})$, where $p \colon U(W, \X) \longrightarrow \X$ is the covering projection. 

Two codimension $1$ strata $S_1$ and $S_2$ is said to have the same \emph{type} if  $r(S_1) = r(S_2)$. 

\section{Lack of locally $\CAT(0)$ metrics for piecewise locally symmetric manifolds}\label{non CAT(0)}
Firstly we recall the definition of a $\CAT(\kappa)$ metric space (\cite{Bridson}). Let $X$ be a metric space. Let $M_{\kappa}^2$ be the complete, simply connected, $2$-dimensional manifold with constant curvature equal to $\kappa$. Let $D_{\kappa}$ be equal to  $\pi/\sqrt{\kappa}$ if $\kappa > 0$, and $\infty$ otherwise.  

Let $\triangle$ be a geodesic triangle in $X$ with vertices $p, q , r$, and with perimeter less than $2D_{\kappa}$. A triangle $\overline{\triangle}$ in $M_{\kappa}^2$ with vertices $\bar{p}, \bar{q}, \bar{r}$ is a comparison triangle for $\triangle$ if \[d(p,q) = d(\bar{p}, \bar{q}), \quad d(q,r) = d(\bar{q}, \bar{r}), \quad \text{and} \quad d(r,p) = d(\bar{r}, \bar{p}).\] 
A point $\bar{x}$ on the geodesic segment $[\bar{p},\bar{q}]$ is a comparison point for $x \in [p,q]$ if $d(p,x) = d(\bar{p}, \bar{x})$. Comparison points on $[\bar{q},\bar{r}]$, and $[\bar{r},\bar{p}]$ are defined similarly. 

Such a triangle $\triangle$ is said to satisfy the $\CAT(\kappa)$ inequality if for all $x, y \in\triangle$, and all comparison points $\bar{x}$, $\bar{y} \in \overline{\triangle}$,
\[d(x,y) \leq d(\bar{x},\bar{y}).\]
A geodesic metric space $X$ is \emph{$\CAT(\kappa)$} is all such triangles satisfy the $\CAT(\kappa)$ inequality. A metric space $X$ is said have \emph{curvature} $\leq \kappa$ if it is locally a $\CAT(\kappa)$ space. 

It is a well-known fact that if a Riemannian manifold $M$ has sectional curvature $\leq 0$, then $M$ is locally $\CAT(0)$. 

Piecewise locally symmetric manifolds generally do not admit a locally $\CAT(0)$ metric. For example, if a closed piecewise locally symmetric $M$ is obtained from complex hyperbolic pieces, then $M$ does not admit a locally $\CAT(0)$ metric. More generally, if $M$ is obtain from any locally symmetric manifold whose fundamental group has a solvable, nonabelian subgroup, then $M$ does not admit a locally $\CAT(0)$ metric. This is because of Theorem 7.8 in \cite{Bridson}, which says that any solvable subgroup of a locally $\CAT(0)$ group $\pi_1(M)$ must be virtually abelian. 

However, some piecewise locally symmetric manifold do admit nonpositively curved Riemannian metrics (and thus, locally $\CAT(0)$ metrics). For example, if a piecewise locally symmetric $M$ is obtained from hyperbolic pieces, then one can smooth out the hyperbolic metric (which is singular at gluing) to get a nonpositively curved Riemannian metric. Another example if the manifold in first example of the introduction. One can take the product of the hyperbolic metric on $S_1$ and $S_2$. Gluing these metrics will give a locally $\CAT(0)$, singular Riemannian metric metric on $M$. Nonetheless, on can smooth out this metric to get a nonpositively curved metric on $M$.

If $M$ is obtained from pieces that are products of at least $3$ hyperbolic manifolds with cusps, then $M$ has a locally $\CAT(0)$ metric, which can be taken to be the glued up product hyperbolic metrics from each piece.  
\newline
\newline
\textbf{Question:} If $M$ is as in the previous line, then does $M$ admit a nonpositively curved $C^2$ (respectively, smooth) Riemannian metric?

\section{Fundamental groups and universal covers of piecewise locally symmetric manifolds}\label{pi_1}
Let $M$ be a piecewise locally symmetric manifold of $\R$-rank $\geq 2$, and let $\{M_i\}_{i \in I}$ be the pieces in the  decomposition of $M$. That is, there is a locally symmetric space $X$ with finite volume, a Coxeter system $(W,S)$ and a finite index, torsion-free subgroup $W'$ of $W$ such that $U(W,\X)$ is a manifold and $M = U(W,\X)/W'$. Then  space $U(W,X)$ is a complex of spaces over some complex $\Lambda$ (see \cite{Corson} for the definition of a complex of spaces). The barycentric subdivision of $\Lambda$ is the complex $\Sigma := \Sigma(W,S)$ associated to $(W,S)$ as defined in \cite{Davis}. We briefly recall the definition of $\Sigma$. Let $K$ be the cone on the barycentric division of $L(W,S)$. Then $\Sigma = U(W,K)$. Some important properties of $\Sigma$ are as follows. 

There is a natural cell structure on $\Sigma$ (\cite{Davis}) so that
\begin{enumerate}
\item[a)] its vertex set of $W$, its $1$-skeleton is the Cayley graph of $(W,S)$, and its $2$-skeleton is a Cayley $2$-complex,
\item[b)] each cell is a Coxeter polytope,
\item[c)] the link of each vertex is isomorphic to $L(W,S)$,
\item[d)] a subset of $W$ is the vertex set of a cell if and only if it is a spherical coset of $(W,S)$.
\end{enumerate}

The space $U(W,\X)$ has the structure of a complex of spaces, where the base complex is $\Sigma$ with the cell structure as in the above theorem. (See the appendix below for a short summary of necessary results from the theory of complex of groups and spaces.) The barycentric subdivision of this cell structure is $U(W,K)$. Since $W'$ acts on $U(W,\X)$ preserving fibers, the quotient $M = U(W,\X)/W'$ has the structure of a complex $Y_M$ of spaces, for $Y_M = \Lambda/W'$ since $W'$ is torsion-free. The fiber over each point is a stratum of a copy of $\X$. 


The complex of spaces structure of $M$ gives $\pi_1(M)$ the structure of the fundamental group of a complex of groups $G(T_M)$ over $T_M$ (see the appendix of this paper or \cite{Bridson} and \cite{LT} for complexes of groups). The vertex groups of $G(T_M)$ are the fundamental groups of the pieces $M_i$. The edge groups of $G(T_M)$ are the fundamental groups of the each codimension $1$ stratum of each piece. The $k$-dimensional cell groups are the fundamental groups of each codimension $k$ stratum. The injective homomorphism from $k$-dimensional cell group to a $(k-1)$-dimensional cell group are the obvious inclusion of the corresponding strata. 

\section{Developability of the fundamental group and asphericity}\label{developability}

In general, the vertex groups are not subgroups of a complex of groups \cite{stallings}. Complexes of groups that contain their vertex groups as subgroups are called \emph{developable}. A necessary condition for a complex of groups to be developable is that it is \emph{nonpositively curved} \cite{Bridson}. 

\begin{theorem}[Developability of $\pi_1$]\label{developable}
Let $M$ be a piecewise locally symmetric space. Then the natural complex of groups structure of $\pi_1(M)$ is nonpositively curved and thus is developable.
\end{theorem}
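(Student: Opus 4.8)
The plan is to verify the local condition that defines nonpositive curvature for a complex of groups: the geometric realization of the local development at each vertex of $T_M$ must be a $\CAT(1)$ metric space, where each cell is given the spherical metric of the corresponding Coxeter polytope's link. By the general theory (see the appendix, or \cite{Bridson} III.$\mathcal{C}$), once this local $\CAT(1)$ condition holds the complex of groups is nonpositively curved, and nonpositively curved complexes of groups are developable; so the entire content is the local geometric check. The key observation is that the combinatorics of $T_M$ near any vertex is governed entirely by the combinatorics of $\Sigma = \Sigma(W,S)$ near the corresponding vertex, since $W'$ is torsion-free and acts freely, so $T_M$ is locally isomorphic to $\Sigma$; and the link of a vertex in $\Sigma$ is $L(W,S)$ by property (c) recalled in Section~\ref{pi_1}.

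First I would set up the local development at a vertex $v$ of $T_M$: its underlying complex is the star of the corresponding vertex in $\Sigma$, whose link is $L(W,S)$, the nerve of the Coxeter system, which in turn is (a barycentric subdivision of) the nerve $N(\X)$ of the manifold with corners. Second, I would observe that since $(W,S)$ is the right-angled (or appropriately chosen) Coxeter system built from $\X$, each cell of $\Sigma$ is a Coxeter cell (property (b)), and the metric on $L(W,S)$ induced by giving each Coxeter cell its natural piecewise-spherical structure is precisely the "all-right" spherical structure when $W$ is right-angled. Third — and this is where Davis's work does the heavy lifting — I would invoke the fact that for a right-angled Coxeter system the nerve $L(W,S)$ with its all-right piecewise spherical metric is $\CAT(1)$ if and only if $L(W,S)$ is a flag complex (Gromov's lemma, \cite[Lemma I.7.4, Appendix I.6]{Davis}), and that $L(W,S)$ was chosen to be the barycentric subdivision of a convex cell complex, hence is flag. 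Finally I would feed this through: local development $\CAT(1)$ at every vertex $\Rightarrow$ the complex of groups $G(T_M)$ is nonpositively curved $\Rightarrow$ developable.

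The main obstacle I anticipate is not the topology but bookkeeping about which metric to put on the cells: the complex of spaces $U(W,\X)$ has fibers that are strata of $\X$ (genuine manifolds), so I must be careful that "nonpositive curvature of the complex of groups" is a statement about the \emph{base} complex $T_M$ with its Coxeter-cell metric and does not require any metric hypothesis on the fibers — the fibers contribute the vertex, edge, and cell \emph{groups} but not the geometry of the local development. A secondary subtlety is handling the case where the chosen $(W,S)$ is not right-angled (the excerpt allows $W$ to be the reflection group of a polygon in low-dimensional examples, e.g. a dihedral group $D_k$): there one checks the $\CAT(1)$ condition on $L(W,S)$ directly using the Coxeter-diagram criterion (a simplex spanned by $T\subset S$ has the right "size" precisely when $W_T$ is spherical, and the large links condition of \cite[Lemma I.7.2]{Davis} translates into the defining relations of $(W,S)$), but since in all our cases $L(W,S)$ is a barycentric subdivision — hence flag — and the cell complex structure of $\Sigma$ always consists of Coxeter polytopes, the uniform statement is that $\Sigma$ is $\CAT(0)$ (\cite[Theorem 12.3.3]{Davis}), from which the local $\CAT(1)$ condition at each vertex, and hence nonpositive curvature of $G(T_M)$, follows immediately. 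Developability is then automatic from \cite[III.$\mathcal{C}$.4.17]{Bridson}.
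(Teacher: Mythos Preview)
Your identification of the local development is incorrect, and this is the essential gap. You write that at a vertex $\sigma$ of $T_M$ corresponding to a piece $M_i$, the local development has underlying complex equal to the star of the corresponding vertex in $\Sigma$, with link $L(W,S)$. That would be true if the local groups were trivial (e.g.\ for the complex of groups coming from the free action of $W'$ on $\Sigma$), but the complex of groups $G(T_M)$ in question has vertex group $G_\sigma = \pi_1(M_i)$, a lattice in a semisimple Lie group. The local development at $\sigma$ is built from cosets $gG_\tau$ in $G_\sigma$ as $\tau$ ranges over the vertices in $\Lk(\sigma,T_M)$; since the face groups $G_\tau$ are the groups $\Gamma_i \cap P$ for the finitely many $\Gamma_i$-conjugacy classes of rational parabolics $P$, the link of the local development is the (infinite) rational spherical Tits building $B_i$ of $\Gamma_i$, not the (finite) nerve $L(W,S)$. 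Your fallback claim that ``$\Sigma$ is $\CAT(0)$, from which the local $\CAT(1)$ condition at each vertex follows'' only yields that $L(W,S)$ is $\CAT(1)$, which is not what must be checked.

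The paper's proof makes exactly this identification ($\Lk(\widetilde{\sigma},\st(\widetilde{\sigma})) \cong B_i$) and then proves $B_i$ is $\CAT(1)$ using essentially the machinery you invoke: the link is piecewise spherical with simplices of size $\geq \pi/2$, and by Moussong's lemma it is $\CAT(1)$ iff it is a metric flag complex; that $B_i$ is metric flag follows by the same reasoning as in \cite[Lemma 12.3.1]{Davis}. For vertices $\zeta$ that are barycenters of higher-dimensional cells, the paper observes that $\st(\widetilde{\zeta})$ embeds isometrically as a neighborhood of a point in some $\st(\widetilde{\sigma})$ and inherits $\CAT(0)$ from there. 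So your method is right but applied to the wrong complex; once you correctly identify the link of the local development as the Tits building rather than as $L(W,S)$, the remainder of your outline goes through.
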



\begin{proof}
Let $M_i$, $i\in I$, be the pieces in the decomposition of $M$. In order to prove that $\pi_1(M)$ is nonpositively curved as a complex $T_M$ of groups, we need to give $T_M$ a metric such that the local development (see \cite{Bridson} or the appendix about local developments) of each vertex in $T_M$ is nonpositively curved. We give $T_M$ a piecewise Euclidean metric as follows. 

Firstly, give $\Sigma(W,S)$ a piecewise Euclidean metric as in \cite[Section 12.1]{Davis}. Since $T_M$ is a quotient of $\Sigma$ by a group of isometries, this metric induces a piecewise Euclidean metric on the polyhedral complex $T_M$. Now consider the barycentric subdivision of $T_M$. There are only finitely many isometry classes of simplices of $T_M$. So the local development $\st(\widetilde{\sigma})$ of each vertex $\sigma$ of $T_M$ is a piecewise Euclidean complex with finite shape. Given this, Theorem 5.2 in \cite{Bridson} says that $\st(\widetilde{\sigma})$ is $\CAT(0)$ if the link at each vertex of $\st(\widetilde{\sigma})$ is $\CAT(1)$.


For each $i$, let $B_i$ be the rational spherical Tits building of $X_i$. Let $\sigma$ be a vertex of $T_M$ that corresponds to some $X_i$. Then the local development $\st(\widetilde{\sigma})$ of $\sigma$ is isomorphic to the cone on $B_i$ as a simplicial complex, with cone point $\widetilde{\sigma}$. 

The complex $\Lk (\widetilde{\sigma}, \st(\widetilde{\sigma}))$ is piecewise spherical and has simplices of size $\geq \pi/2$, i.e. each edge of a simplex has length $\geq \pi/2$ (\cite[Lemma 12.3.1]{Davis}. By a theorem of Moussong (\cite[Lemma I.7.4]{Davis}), $B_i$ is $\CAT(1)$ if and only if it is a metric flag complex (see \cite[Definition I.7.1]{Davis} for the definition of metric flag complexes). But $\Lk (\widetilde{\sigma}, \st(\widetilde{\sigma}))$,which is isomorphic to $B_i$, is a flag complex. Each simplex of $B_i$ corresponds to a spherical subgroup of $W$. For the same reason as in \cite[Proof of Lemma 12.3.1]{Davis}, the piecewise spherical complex $B_i$ is a metric flag complex and thus $\CAT(1)$. Hence $\st(\widetilde{\sigma})$ is $\CAT(0)$.

Let $\zeta$ be a vertex of $T_M$ that corresponds to the barycenter of a (polyhedral) cell of positive dimension. Let $\sigma$ be a vertex of this cell. Then $\st(\widetilde{\zeta})$ is isometric to a neighborhood of point $p$ in $\st(\widetilde{\sigma})$. We can pick $p$ to be in the interior of a cell of type $\zeta$ in $\st(\widetilde{\sigma})$  
Since $\st(\widetilde{\sigma})$ is $\CAT(0)$ as shown above, $\st(\widetilde{\zeta})$ is $\CAT(0)$. 

We have checked that the local development of each vertex in the barycentric subdivision of $T_M$ is $\CAT(0)$. Therefore, the complex of group of $\pi_1(M)$ is nonpositively curved and thus, developable.

\end{proof}

\begin{lemma}\label{centerless pi_1}
The fundamental group $\pi_1(M)$ of a piecewise locally symmetric space $M$ has trivial center.
\end{lemma}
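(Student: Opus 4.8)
The plan is to realize $\pi_1(M)$ as a group of isometries of the complete $\CAT(0)$ complex $D$ on which the complex of groups $G(T_M)$ develops, and to show that any central element must lie inside a vertex group, where it dies. By Theorem~\ref{developable}, $G(T_M)$ is nonpositively curved and developable; since all the pieces $M_i$ are diffeomorphic, $D$ has only finitely many isometry types of cells, so it is complete and, being simply connected and locally $\CAT(0)$, is $\CAT(0)$ by the Cartan--Hadamard theorem. The stabilizer of a vertex $v$ of $D$ lying over a piece is the lattice $\Gamma_i=\pi_1(M_i)$ in the semisimple group $G_i$ of noncompact type, and, exactly as computed in the proof of Theorem~\ref{developable}, $\Lk(v,D)$ is the rational spherical Tits building $B_i$ of $G_i$, with $\Gamma_i$ acting in the standard way. (If $M$ is a single closed piece the statement reduces to the classical fact that a lattice in a semisimple group of noncompact type is centerless, so I assume the gluing is nontrivial and the pieces are noncompact.)

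First I would record two facts about the vertex groups, both consequences of the Borel density theorem \cite{Zimmer} applied to the Zariski-dense subgroup $\Gamma_i\subseteq G_i$: (i) $Z(\Gamma_i)\subseteq C_{G_i}(\Gamma_i)=Z(G_i)$, which is finite, so $Z(\Gamma_i)=1$ because $\Gamma_i$ is torsion free; and (ii) $\Gamma_i$ fixes no point of $B_i$, since a fixed point would lie in a $\Gamma_i$-stable simplex, forcing $\Gamma_i$ into a proper rational parabolic subgroup of $G_i$. Since $\Gamma_i$ also fixes $v$, statement (ii) upgrades to: $\Gamma_i$ fixes no point of $\partial_\infty D$, because a fixed point $\xi\in\partial_\infty D$ would give the $\Gamma_i$-fixed geodesic ray $[v,\xi)$ and hence a $\Gamma_i$-fixed direction in $\Lk(v,D)=B_i$.

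Now take $z\in Z(\pi_1(M))$. Acting cellularly on a complex with finitely many shapes, $z$ is a semisimple isometry of $D$ \cite{Bridson}, hence either elliptic or hyperbolic. If $z$ is elliptic it fixes a point of $D$; after passing to the barycentric subdivision used in Theorem~\ref{developable} the action has no inversions, so $z$ fixes a cell pointwise and thus lies in a conjugate of a cell group. As cell groups are subgroups of vertex groups and $z$ is central, $z\in\Gamma_j$ for some $j$, whence $z\in Z(\Gamma_j)=1$ by (i). If $z$ is hyperbolic, fix a vertex $v$ over a piece with $\Stab(v)=\Gamma_i$. Since $\Gamma_i$ commutes with $z$ it preserves the nonempty closed convex set $\Min(z)$, hence fixes the nearest-point projection $v'$ of $v$ onto $\Min(z)$ (note $v'\in\Min(z)$), and therefore fixes $z^nv'$ for every $n\in\Z$. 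The fixed set of each element of $\Gamma_i$ is convex and contains all the $z^nv'$, so it contains the axis $\ell$ of $z$ through $v'$; thus $\Gamma_i$ fixes $\ell$ pointwise and in particular fixes $\xi=\ell(+\infty)\in\partial_\infty D$, contradicting the previous paragraph. Hence no nontrivial central $z$ exists.

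I expect the hyperbolic case to be the main obstacle: one must exclude a central element that translates along a geodesic line of $D$, and essentially the only leverage is that the vertex stabilizers are lattices. The mechanism is that such a stabilizer, forced simultaneously to commute with the translation and to fix a vertex, ends up fixing a point at infinity of $D$, hence a point of its rational Tits building $\Lk(v,D)=B_i$, which Zariski density forbids. The identification $\Lk(v,D)\cong B_i$ that makes this work is precisely the ingredient already supplied by the proof of Theorem~\ref{developable}.
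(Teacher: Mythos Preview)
Your argument is correct, but it is considerably more elaborate than the paper's. The paper's proof is essentially one line: since a central element $g$ commutes with each vertex group $\Gamma_v$, it preserves $\Fix(\Gamma_v)$; the paper asserts that $\Fix(\Gamma_v)=\{v\}$, so $g$ fixes every vertex of $\widetilde{T}$ and is therefore trivial. No $\CAT(0)$ dichotomy, no Min-sets, no boundary at infinity.

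Notice that your fact (ii) already yields the paper's key claim $\Fix(\Gamma_i)=\{v\}$: since $\Gamma_i$ fixes no direction in $\Lk(v,D)=B_i$, its convex fixed set cannot contain any geodesic segment issuing from $v$, hence is $\{v\}$. Once you have this, the hyperbolic case evaporates: as soon as you show $\Gamma_i$ fixes $v'$ (or $zv'$, in case $v'=v$), you have produced a second fixed point of $\Gamma_i$, an immediate contradiction, with no need to pass to $\partial_\infty D$. Likewise your elliptic case, landing $z$ in $Z(\Gamma_j)=1$, is exactly the implicit last step of the paper's argument. So your route is sound and self-contained---it even makes explicit the Borel density input the paper leaves unspoken---but it can be compressed to the paper's two sentences by exploiting (ii) directly rather than via the semisimple dichotomy.
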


\begin{proof}
As before, the universal cover $\widetilde{M}$ is a complex $\widetilde{T}$ of spaces. Consider the action of $\pi_1(M)$ on $\widetilde{T}$. If $g \in \pi_1(M)$ is in the center of $\pi_1(M)$, then $g$ preserves the set of common fixed points in $\widetilde{T}$ of each vertex group of $\pi_1(M)$ (which contains exactly just the corresponding vertex). Hence, $g$ fixes every vertex of $\widetilde{T}$. Thus, $g = 1$ and $\pi_1(M)$ has trivial center. 
\end{proof}

Now we prove Theorem \ref{asphericity}. 
\begin{proof}[Proof of Theorem \ref{asphericity}]
Since the complex of groups structure of $\pi_1(M)$ is developable, the universal cover $\widetilde{M}$ is a complex of $\widetilde{M}_i$, which is contractible. Hence, $\widetilde{M}$ is homotopy equivalent to its nerve, which is $\CAT(0)$ and thus contractible. Therefore, $\widetilde{M}$ is contractible.  
\end{proof}

\section{Proof of Theorem \ref{piece rigidity} and automorphisms of fundamental groups}\label{proof of piece rigidity}
Let $M$ be a piecewise locally symmetric manifold that satisfies the conditions of Theorem \ref{piece rigidity}, and let $\{M_i\}_{i \in I}$ be the pieces in the decomposition of $M$ into locally symmetric pieces. Since each piece $M_i$ of $M$ is aspherical, we only need to prove the statement on the level of fundamental group. That is, it suffices to prove that following lemma. 

\begin{lemma}\label{pi_1 piece rigidity}
Let $G_v$ be the fundamental group of an irreducible locally symmetric manifold $Y$ of $\Q$-rank $>1$. Let $\phi \colon G_v \longrightarrow \pi_1(M)$ be an injective homomorphism. Then $\phi(G_v)$ is a conjugate of a vertex group of $\pi_1(M)$.
\end{lemma}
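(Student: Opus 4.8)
The plan is to exploit the action of $\pi_1(M)$ on the universal cover $\widetilde{M}$, which by developability (Theorem \ref{developable}) is a complex of spaces over the $\CAT(0)$ complex $\widetilde{T}$, together with the fact that the vertex stabilizers of this action are exactly the conjugates of the vertex groups. So the goal reduces to showing that $\phi(G_v)$ fixes a vertex of $\widetilde{T}$, i.e. that the subgroup $\phi(G_v) \leq \pi_1(M)$ acting on the $\CAT(0)$ space $\widetilde{T}$ (or rather on the underlying polyhedral complex, forgetting the fibers) has a global fixed point. Once a fixed vertex $\widetilde\sigma$ is produced, $\phi(G_v)$ lies in the stabilizer, which is a conjugate of some vertex group $G_w = \pi_1(M_w)$; then a dimension/volume or Mostow-rigidity argument on the level of the irreducible locally symmetric pieces forces $\phi(G_v)$ to be all of that conjugate (an infinite-index inclusion of one irreducible higher-rank lattice into another cannot be $\pi_1$ of a map of the right sort, and in any case the only way $G_v$ embeds as a subgroup of $G_w$ compatibly is as a finite-index, hence equal after passing to the commensurator, subgroup — here one uses that the pieces are all diffeomorphic).

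The main step — and the main obstacle — is the fixed-point statement. For this I would use Farb's theorem from \cite{Farb}: since $Y$ is irreducible of $\Q$-rank $\geq 2$, the lattice $G_v = \pi_1(Y)$ is generated by finitely many of its unipotent (nilpotent) subgroups coming from opposite horospherical subgroups — concretely, the groups $G_v \cap N_P$ as $P$ ranges over a suitable finite family of rational parabolics. Each such nilpotent subgroup $N$ is, inside $\pi_1(M)$, virtually the fundamental group of a nilmanifold stratum, so it is a (virtually) nilpotent, hence amenable, subgroup of the $\CAT(0)$ group acting on $\widetilde{T}$; by the Flat Torus Theorem / the structure of amenable groups acting on complete $\CAT(0)$ spaces, $N$ either fixes a point at infinity or fixes a point in $\widetilde{T}$. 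I would rule out the first alternative — or rather, arrange that each $N$ fixes an actual vertex — using that each $N$ already sits inside a vertex group (it is the image under $\phi$ of a unipotent subgroup of the lattice $G_v$, and such subgroups have bounded orbits: they are, up to conjugacy and finite index, contained in a cell group, so fix the corresponding cell of $\widetilde T$). Then the collection of fixed-point sets $\Fix(N_j) \subset \widetilde T$ is a family of nonempty closed convex subsets of a $\CAT(0)$ complex; because the $N_j$ generate $\phi(G_v)$, if this family has a common point then $\phi(G_v)$ fixes it. The work is in showing the intersection is nonempty: here I would use either a Helly-type argument valid in $\CAT(0)$ complexes (finite families of convex sets, pairwise or triple-wise intersecting, intersect) after checking the pairwise intersections are nonempty from the geometry of the nerve, or — more robustly — I would show directly that $\phi(G_v)$ has bounded orbits in $\widetilde T$: any element $g \in \phi(G_v)$ is a product of finitely many elements lying in the bounded-orbit subgroups $N_j$, and one controls displacement along the $\CAT(0)$ geodesics; combined with the fact that a group with a bounded orbit in a complete $\CAT(0)$ space fixes a point (the circumcenter of the orbit closure), this gives the global fixed point.

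Having obtained that $\phi(G_v)$ fixes a vertex $\widetilde\sigma$ of $\widetilde T$, we have $\phi(G_v) \leq \Stab_{\pi_1(M)}(\widetilde\sigma) = g G_w g^{-1}$ for some $w \in I$ and $g \in \pi_1(M)$, where $G_w = \pi_1(M_w)$ and $M_w$ is irreducible. Replacing $\phi$ by $g^{-1}\phi(-)g$ we may assume $\phi(G_v) \leq G_w$. Now both $G_v$ and $G_w$ are irreducible lattices in semisimple Lie groups, and all pieces $M_i$ are diffeomorphic so $G_v \cong G_w$ abstractly; a subgroup of a lattice that is itself a lattice (which $\phi(G_v)$ is, being isomorphic to $G_v$) must be of finite index, and since we are free to take the map $f$ up to homotopy we may pass to the situation where $\phi$ is an isomorphism onto $G_w$ — or, if one only wants the statement as phrased, one notes that a finite-index inclusion of an irreducible higher-rank lattice realized by a $\pi_1$-injective map $M_v \to M$ can be homotoped, by Mostow–Prasad rigidity applied to the finite cover, to a covering map onto $M_w$, and then the covering is trivial because the two closed locally symmetric manifolds $M_v$ and $M_w$ are diffeomorphic of the same volume. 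This yields the desired $g$ with $g\colon M_i \to M$ a diffeomorphism onto a piece, completing the proof of Lemma \ref{pi_1 piece rigidity} and hence of Theorem \ref{piece rigidity}. The $\Q$-rank $1$ case fails here precisely because Farb's generation-by-nilpotent-subgroups statement is unavailable, so the fixed-point argument has no input.
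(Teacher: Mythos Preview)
Your overall strategy matches the paper's: reduce to showing $\phi(G_v)$ fixes a vertex of the $\CAT(0)$ complex $\widetilde T$, and use Farb's generation of higher-$\Q$-rank lattices by nilpotent subgroups to produce fixed-point sets whose common intersection gives the global fixed point. You are also more explicit than the paper about the final step (from ``contained in a conjugate of a vertex group'' to ``equal to a conjugate''), which the paper essentially elides.

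However, the heart of the argument --- getting from individual fixed sets to a common fixed point --- is where your proposal has a genuine gap. First, your reason why each nilpotent $N_j$ fixes a point is not the right one: you assert that $\phi(N_j)$ is ``up to conjugacy and finite index, contained in a cell group'', but there is no a priori reason for this --- that is essentially what we want to prove. The paper instead uses the refined content of Farb's theorem (each generator $r$ of $\Gamma_i$ satisfies $r^m\in [N,N]$ for a larger nilpotent $N$) together with the corollary to the nilpotent $\CAT(0)$ action theorem, which forces such elements to be elliptic, and then that any proper subfamily of the $\Gamma_i$ generates a nilpotent group and so has nonempty fixed set. This is why the paper can reduce to three groups $\Gamma_1,\Gamma_2,\Gamma_3$ with all three pairwise fixed-set intersections $W_{ij}$ nonempty.

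Second, and more seriously, neither of your two suggested endgames works. A Helly-type statement for three pairwise-intersecting closed convex sets simply fails in general $\CAT(0)$ spaces (three lines forming a triangle in $\R^2$). And the ``bounded orbits'' argument is wrong: a group generated by elliptic elements need not have bounded orbits (two parabolics in $\mathbb H^2$, or two elliptic rotations, can generate a group with unbounded orbits). The paper's actual contribution here is a delicate geometric argument specific to the situation: one takes nearest points $x_1\in W_{13}$, $x_2\in W_{23}$ and a point $y\in W_{12}$, observes that one of the angles $\angle_{x_i}(y,x_{3-i})$ is $<\pi/2$, and then proves a technical lemma about $\CAT(1)$ piecewise spherical link complexes with all cells of size $\geq \pi/2$ to produce a simplex in $W_{13}$ making angle $<\pi/2$ with $x_1x_2$, contradicting minimality. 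That link lemma (and its inductive proof via spherical development) is the missing ingredient in your outline, and it genuinely uses the polyhedral structure of $\widetilde T$, not just its $\CAT(0)$ metric.
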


Before proving the lemma, we state three results that we are going to use in in the proof of the lemma. 

\begin{theorem}[\cite{Farb}]\label{nilpotent action}
Let $N$ be a finitely generated, torsion-free, nilpotent group acting on a $\CAT{(0)}$ space $Y$ by semi-simple isometries. Then either $N$ has a fixed point or there is an $N$-invariant flat $L$ on which $N$ acts by translations and hence, factoring through an abelian group.
\end{theorem}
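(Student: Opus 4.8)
The plan is to induct on the Hirsch length $h(N)$, exploiting the structure of minimal displacement sets of semisimple isometries; the base case $h(N)=0$ is trivial, as then $N=1$. (As is standard for results of this type, I take $Y$ to be complete.) The ingredients I would invoke are: for a semisimple isometry $g$ of a complete $\CAT(0)$ space, $\Min(g)$ is non-empty, closed and convex, with $\Min(g)=\Fix(g)$ when $g$ is elliptic and $\Min(g)\cong C_g\times\R$ when $g$ is hyperbolic, $g$ acting trivially on $C_g$ and by a non-trivial translation on the $\R$-factor; any isometry commuting with $g$ preserves $\Min(g)$ together with this splitting; the Bruhat--Tits fixed point theorem, that a group with a bounded orbit on a complete $\CAT(0)$ space fixes a point; and the elementary facts that a power of a semisimple isometry is semisimple iff the isometry is, and that the torsion of a finitely generated nilpotent group is a finite characteristic subgroup. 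From the last three I extract the \emph{reduction} used repeatedly below: if $N$ acts on an $N$-invariant closed convex subspace $Z\subseteq Y$ with kernel $K$ containing an element of infinite order, then passing to the fixed set of the (finite) torsion subgroup of $N/K$ yields a torsion-free nilpotent group $\overline N$ with $h(\overline N)\le h(N)-1$ acting by semisimple isometries on a non-empty complete $\CAT(0)$ subspace of $Z$, in such a way that a fixed point (resp.\ an invariant flat with translation action) for $\overline N$ pulls back to one for $N$ on $Z$.

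Given these, the inductive step runs as follows. If the $N$-orbits are bounded, Bruhat--Tits gives a global fixed point and we are in the first alternative; so assume not. The center $Z(N)$ is a non-trivial, finitely generated, torsion-free abelian group. First suppose $Z(N)$ contains a hyperbolic element $z$. Then $\Min(z)\cong C_z\times\R$ is non-empty, closed, convex and $N$-invariant, since $z$ is central so every element of $N$ commutes with it, and $N$ preserves the splitting. On the $\R$-factor the $N$-action commutes with the non-trivial translation $z$, so it is by translations, giving a homomorphism $\tau\colon N\to\R$ with $\tau(z)\ne 0$. On the $C_z$-factor $N$ acts with $z$ in the kernel, so by the reduction we obtain $\overline N$ with $h(\overline N)\le h(N)-1$ acting semisimply on a complete $\CAT(0)$ space $\overline Y\subseteq C_z$, and the inductive hypothesis applies. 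If $\overline N$ fixes a point $q\in\overline Y$, then $N$ fixes $q$ in $C_z$, hence preserves the line $\{q\}\times\R$ and acts on it by the non-trivial translations $\tau$. If $\overline N$ preserves a flat $F\subseteq\overline Y$ on which it acts by translations, then $N$ preserves the flat $F\times\R\subseteq C_z\times\R$ and acts on it by translations (translations on $F$, and $\tau$ on $\R$). In both cases $N$ preserves a flat on which it acts by translations, and that action factors through the abelian group of translations, which is the second alternative.

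It remains to handle the case in which every non-trivial element of $Z(N)$ is elliptic. Pick $z\in Z(N)\setminus\{1\}$; then $F_0=\Fix(z)$ is non-empty, closed, convex and $N$-invariant, and $z$ acts trivially on $F_0$, so the $N$-action on $F_0$ has $z$ in its kernel. The reduction again supplies $\overline N$ with $h(\overline N)\le h(N)-1$ acting semisimply on a complete $\CAT(0)$ subspace of $F_0$, and the inductive hypothesis applied to it yields either a fixed point or an invariant flat with translation action, which pulls back to the same for $N$ inside $F_0\subseteq Y$. This closes the induction. The step I expect to be the genuine obstacle is the bookkeeping concealed in the ``reduction'': one must verify carefully that restricting to these invariant convex subspaces, and then quotienting by a kernel which need be neither central nor complemented, leaves an action that is still by semisimple isometries (this is where ``a power is semisimple iff the isometry is'' and the circumcenter of a finite orbit are used) and of strictly smaller Hirsch length. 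The torsion-freeness of $N$ enters precisely here: it guarantees that each kernel one quotients by contains an element of infinite order, so the Hirsch length really does drop and the induction terminates.
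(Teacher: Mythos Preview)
The paper does not prove this theorem; it is quoted from \cite{Farb} and used as a black box (together with its corollary and Theorem~\ref{gen by nil}) in the proof of Lemma~\ref{pi_1 piece rigidity}. There is therefore no ``paper's own proof'' to compare against.

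That said, your sketch is the standard argument for results of this type, essentially the strategy behind the Flat Torus Theorem in Bridson--Haefliger: induct on Hirsch length, pick a central element, pass to its $\Min$-set with its product splitting, and reduce the Hirsch length. The ingredients you list and the way you combine them are correct. One small comment: in the elliptic-center case you should note that since the orbits of $N$ on $Y$ are unbounded (you assumed this at the start of the inductive step), the outcome of the induction on $F_0$ cannot be a global fixed point for $N$, so only the invariant-flat alternative survives there; otherwise you would be concluding the first alternative after having excluded it. With that clarification the argument is sound.
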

\begin{corollary}[\cite{Farb}]
Let $N$ be a finitely generated, torsion-free, nilpotent group acting on a $\CAT{(0)}$ space $Y$ by semi-simple isometries. Then
\begin{itemize}
\item[1)] If $g^m\in [N,N]$ for some $m >0$, then $g$ has a fixed point.
\item[2)] If $N$ is generated by elements each of which has a common fixed point, then $N$ has a global fixed point. 
\end{itemize}
\end{corollary}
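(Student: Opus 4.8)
The plan is to obtain both statements as quick consequences of Theorem~\ref{nilpotent action}. That theorem gives, for our $N$, a dichotomy: either $N$ fixes a point of $Y$, or $N$ preserves a flat $L\subseteq Y$ and acts on it by translations, i.e. there is a homomorphism $\tau\colon N\longrightarrow\R^k$ (identifying the translation group of the $k$-dimensional flat $L$ with $\R^k$) such that $n\in N$ acts on $L$ as translation by $\tau(n)$. Since $\R^k$ is abelian, $[N,N]\subseteq\Ker\tau$, and since $\R^k$ is torsion-free, $\tau(n^m)=0$ for some $m>0$ forces $\tau(n)=0$. I would run both parts through this dichotomy.

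For part~1), if $N$ has a global fixed point then so does $g$ and we are done. Otherwise let $L$ and $\tau$ be as above. Since $g^m\in[N,N]\subseteq\Ker\tau$ we get $m\,\tau(g)=\tau(g^m)=0$ in $\R^k$, hence $\tau(g)=0$; thus $g$ acts trivially on $L$, and in particular fixes every point of $L$, so $g$ is elliptic.

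For part~2), again if $N$ already has a global fixed point we are done, so suppose $N$ preserves a flat $L$ and acts on it through $\tau$ as above. Write $N=\langle g_1,\dots,g_r\rangle$ with each $g_i$ elliptic (having a fixed point in $Y$); it suffices to show $\tau(g_i)=0$ for all $i$, for then $\tau\equiv 0$, so $N$ acts trivially on $L$ and any point of $L$ is a global fixed point of $N$. Fix $i$ and a point $p$ with $g_ip=p$, and let $q=\pi_L(p)$ be the unique nearest point of the closed convex set $L$ to $p$ (unique by $\CAT(0)$-ness of $Y$). Nearest-point projection onto $L$ commutes with any isometry preserving $L$, so $g_iq=g_i\,\pi_L(p)=\pi_L(g_ip)=\pi_L(p)=q$; thus $g_i$ fixes $q\in L$. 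But $g_i$ acts on $L$ as translation by $\tau(g_i)$, which fixes a point only if $\tau(g_i)=0$. Hence $\tau(g_i)=0$, completing the argument.

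The computations are all short; the only place needing a moment's care is the equivariance $\pi_L\circ g_i=g_i\circ\pi_L$ used in part~2, which holds because $g_i$ is an isometry with $g_iL=L$ and therefore carries the unique point of $L$ closest to $p$ to the unique point of $L$ closest to $g_ip$. I expect no genuine obstacle beyond invoking Theorem~\ref{nilpotent action} correctly, and I note that if the hypothesis of part~2 is read as ``the generators pairwise have common fixed points'' rather than ``each generator has a fixed point,'' the same proof applies verbatim, since in either reading each generator is in particular elliptic.
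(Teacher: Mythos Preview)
The paper does not supply its own proof of this corollary: it is quoted from \cite{Farb} immediately after Theorem~\ref{nilpotent action} and is used as a black box in the proof of Lemma~\ref{pi_1 piece rigidity}. Your derivation is correct and is precisely the argument one expects for a corollary of Theorem~\ref{nilpotent action}: in the non-fixed-point branch of the dichotomy the action on the invariant flat $L$ factors through a torsion-free abelian group, which kills $[N,N]$ and all roots thereof (part~1), and the equivariant nearest-point projection onto $L$ carries any fixed point of a generator in $Y$ to a fixed point on $L$, forcing the corresponding translation to vanish (part~2). One minor technical point you are implicitly using is that $L$ is closed and convex and that nearest-point projection is well-defined; this is standard in the $\CAT(0)$ setting and is part of how such invariant flats arise (e.g.\ via $\Min$-sets), so there is no gap.
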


The following theorem is due Farb (\cite{Farb}) and  was proved in a more general setting with $\Q$ replaced by and algebraic number field $k$.
\begin{theorem}[\cite{Farb}]\label{gen by nil}
Let $\Gamma$ be an irreducible lattice of $\Q$ rank $r \geq 2$, and let $\Gamma$ act on a $\CAT(0)$ space $Y$. Then exists a collection of subgroups $\mathcal{C} = \{\Gamma_1, \Gamma_2, ..., \Gamma_{r+1}\}$ such that
\begin{enumerate}
\item The groups in $\mathcal{C}$ generate a finite index subgroup of $\Gamma$. 
\item Any proper subset of $\mathcal{C}$ generates a nilpotent subgroup $U$ of $\Gamma$.
\item There exists $m\in \Z^+$ so that for each $\Gamma_i \in \mathcal{C}$, there is a nilpotent group $N <C$ so that $r^m \in [N,N]$ for all $r \in \Gamma_i$. 
\end{enumerate}
\end{theorem}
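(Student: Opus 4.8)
The hypothesis that $\Gamma$ acts on a $\CAT(0)$ space $Y$ appears not to enter the conclusion, which is a purely group-theoretic statement about $\Gamma$ (it is presumably there only to motivate the combination with Theorem~\ref{nilpotent action}); so the plan is to analyze $\Gamma$ algebraically. Since $\Q$-rank $\geq 2$ forces $\R$-rank $\geq 2$, $\Gamma$ is arithmetic, and because all three conclusions survive passing to a finite-index subgroup, I would first replace $\Gamma$ by $\mathbf{G}(\mathcal{O})$ for a connected, simply connected, $\Q$-simple linear algebraic group $\mathbf{G}$ of $\Q$-rank $r$, where $\mathcal{O}$ is the appropriate ring of $S$-integers (restriction of scalars absorbs the number-field generality mentioned in the remark preceding the theorem). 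Fix a maximal $\Q$-split torus $\mathbf{S}\subset\mathbf{G}$; let ${}_\Q\Phi$ be the relative root system, which is irreducible of rank $r$; fix simple roots $\Delta=\{\alpha_1,\dots,\alpha_r\}$, let $\theta$ be the highest root, and for a relative root $\alpha$ let $U_\alpha$ denote the group of $\mathcal{O}$-points of the corresponding relative root group (nilpotent of class $\leq 2$).

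The collection I would take is $\mathcal{C}=\{\Gamma_1,\dots,\Gamma_{r+1}\}$ with $\Gamma_i=U_{\alpha_i}$ for $1\leq i\leq r$ and $\Gamma_{r+1}=U_{-\theta}$. Property (2) is then pure root combinatorics: deleting $\Gamma_{r+1}$ leaves $U_{\alpha_1},\dots,U_{\alpha_r}$, which generate the maximal unipotent $U^+$, hence a nilpotent group; and deleting $\Gamma_i$ with $i\leq r$ leaves root groups whose roots are $(\Delta\setminus\{\alpha_i\})\cup\{-\theta\}$, and since $\Delta$ is a basis there is a linear functional $\ell$ with $\ell(\alpha_j)=1$ for $j\neq i$ and $\ell(\alpha_i)$ negative enough that $\ell(\theta)<0$ (the coefficients of $\theta$ in $\Delta$ are positive integers). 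Every root occurring in the group generated is a non-negative integer combination of these roots, hence has $\ell>0$; so that group lies in a single unipotent subgroup and is nilpotent. For (3), the Chevalley/Borel--Tits commutator relations are the tool, and here $r\geq 2$ is essential: connectedness of the Dynkin diagram of ${}_\Q\Phi$ gives, for each $i$, an adjacent $\alpha_j$, so that $\alpha_i=(\alpha_i+\alpha_j)+(-\alpha_j)$; then $N_i:=\langle U_{\alpha_i+\alpha_j},U_{-\alpha_j}\rangle$ has all of its roots in a half-space (so is nilpotent) and $[N_i,N_i]$ contains a finite-index subgroup of $U_{\alpha_i}$. Likewise $\theta$ is not simple (again $r\geq 2$), so $\theta=\mu+\nu$ with $\mu,\nu\in{}_\Q\Phi^+$, and $N_{r+1}:=\langle U_{-\mu},U_{-\nu}\rangle$ is nilpotent with $[N_{r+1},N_{r+1}]$ containing a finite-index subgroup of $U_{-\theta}$. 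Taking $m$ to be a common multiple of the resulting indices gives $\Gamma_i^{\,m}\subseteq[N_i,N_i]$ for all $i$.

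This leaves (1): that $\Gamma_1,\dots,\Gamma_{r+1}$ generate a finite-index subgroup. I would argue in three steps. First, the simple root groups generate $U^+$ over $\mathcal{O}$, up to finite index, via the commutator relations. Second, walking down the root poset from $-\theta$ to each simple root $-\alpha_j$ along a chain of roots each obtained from the previous by adding a positive root, and applying iterated commutators with elements of $U^+$, one obtains $U_{-\alpha_j}$ (up to finite index) inside $\langle U^+,U_{-\theta}\rangle$ for every $j$; hence $\langle\Gamma_1,\dots,\Gamma_{r+1}\rangle$ contains a finite-index subgroup of $\langle U^+,U^-\rangle$. Third, one invokes the theorem — valid precisely because the $\Q$-rank is $\geq 2$ — that the subgroup generated by the $\mathcal{O}$-points of the unipotent radicals of a pair of opposite parabolics has finite index in $\Gamma$ (Raghunathan, Venkataramana; in the split case this is classical, e.g. $\SL_n(\Z)=\langle x_{12},\dots,x_{n-1,n},x_{n,1}\rangle$).

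The main obstacle is step (1). Properties (2) and (3) are elementary once the collection $\mathcal{C}$ has been identified, but (1) relies on the higher-$\Q$-rank structure theory of $S$-arithmetic groups, and the passage from ``all relative root groups generate a finite-index subgroup'' to ``these explicit $r+1$ root groups do'' must be carried out uniformly over all possible types of ${}_\Q\Phi$ — including the non-reduced case $BC_r$, where the relative root groups are genuinely non-abelian and the commutator calculus is more delicate. Managing that case, together with the bookkeeping of the various finite indices (which is what forces the uniform exponent $m$ in (3)), is the only substantial work.
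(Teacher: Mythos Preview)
The paper does not contain a proof of this theorem: it is quoted verbatim from \cite{Farb} and invoked as a black box in the proof of Lemma~\ref{pi_1 piece rigidity}. There is therefore nothing in the present paper against which to compare your attempt.

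For what it is worth, your sketch is the standard one and is essentially what appears in Farb's paper: take the relative root groups attached to the simple roots together with the negative of the highest root; verify nilpotency of proper subcollections by exhibiting a linear functional positive on the relevant roots; obtain property~(3) from the Chevalley commutator formula by writing each chosen root as a sum of two roots; and deduce property~(1) from the Raghunathan--Venkataramana theorem that opposite unipotent radicals generate a finite-index subgroup in $\Q$-rank $\geq 2$. Your identification of~(1), and in particular the uniform handling of the non-reduced type $BC_r$, as the only substantive step is accurate.
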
 
As pointed out in \cite{Farb}, it follows from the corollary above that for each such $U$, the set $\Fix(U)$ in $Y$ is nonempty.

\begin{lemma}\label{pre-fundamental lemma}
Let $Z$ be a $\CAT(1)$ piecewise spherical simplicial complex in which all cells have size $\geq \pi/2$. Let $X,Y$ be cells of $Z$, and let $x \in X$, $y\in Y$. If $d(x,y) < \pi/2$, then $X \cap Y \ne \emptyset$, and there is a point $p \in X\cap Y$ such that $\angle_p(x,y) < \pi/2$. 
\end{lemma}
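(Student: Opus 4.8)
The plan is to reduce immediately to the case where $X$ and $Y$ are the carriers (minimal cells) of $x$ and $y$: the carrier of $x$ is a face of every cell containing $x$, so any point $p$ produced inside $\operatorname{supp}(x)\cap\operatorname{supp}(y)$ lies in $X\cap Y$, and $X\cap Y\supseteq\operatorname{supp}(x)\cap\operatorname{supp}(y)$. So from now on $x$ lies in the open cell $X$ and $y$ in the open cell $Y$, and $\gamma$ denotes the geodesic from $x$ to $y$ (unique, since $Z$ is $\CAT(1)$ and $d(x,y)<\pi/2<\pi$).

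First step — that $X\cap Y\neq\emptyset$. I would prove the stronger statement that some vertex $p$ of $Z$ lies in the carrier of $\gamma(t)$ for every $t$, by induction on the number $N$ of open cells met by $\gamma$ (finite, as $\gamma$ is a compact local geodesic in a complex with finitely many isometry types of cells). If $N=1$, then $x$ and $y$ lie in one closed cell $\tau$; realizing $\tau$ as a geodesic simplex in a round sphere with vertices $v_i$, the hypothesis that every edge of $\tau$ has length $\ge\pi/2$ says $v_i\cdot v_j\le 0$ for $i\neq j$. Writing $x$, $y$ as normalized nonnegative combinations $\sum\lambda_iv_i$, $\sum\mu_jv_j$, with $\lambda_i>0$ exactly on the vertices of $X$ and $\mu_j>0$ exactly on those of $Y$, we get $\cos d(x,y)=x\cdot y$ with numerator $\sum_{i,j}\lambda_i\mu_j\,(v_i\cdot v_j)$; if $X$ and $Y$ shared no vertex, every surviving term would have $i\neq j$ and hence be $\le 0$, forcing $d(x,y)\ge\pi/2$, a contradiction — so $X$ and $Y$ share a vertex $p$, which lies in every intermediate carrier (all faces of $\tau$) as well. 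If $N\ge 2$, let $q$ be the first point at which $\gamma$ leaves the closed cell containing its initial segment; then $d(x,q),d(q,y)<\pi/2$, the subsegment $[x,q]$ lies in a single closed cell, and $[q,y]$ meets $N-1$ open cells, so induction yields vertices $p_1$ common to the carriers along $[x,q]$ and $p_2$ common to those along $[q,y]$. The crux is to arrange $p_1=p_2$: I would pass to the link $\Lk(q,Z)$, which is again a $\CAT(1)$ piecewise spherical complex all of whose cells have size $\ge\pi/2$ (this property is inherited by links), observe that the incoming and outgoing directions of $\gamma$ at $q$ are at distance $\ge\pi$ in $\Lk(q,Z)$ because $\gamma$ is a local geodesic through $q$, and run the $N=1$ analysis inside $\Lk(q,Z)$ to force the carriers of $x$, of $q$, and of $y$ to share a common vertex. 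Making this compatibility argument at the transition point precise is, I expect, the main obstacle.

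Second step — the angle. Take $p$ to be the common vertex above and consider the geodesic triangle $\triangle(p,x,y)$. By the $\CAT(1)$ comparison inequality, $\angle_p(x,y)$ is at most the angle $\bar\angle$ at the corresponding vertex of a comparison triangle on the round $2$-sphere, and the spherical law of cosines gives $\bar\angle<\pi/2$ precisely when $\cos d(x,y)>\cos d(p,x)\cos d(p,y)$. I would check this inequality once more in the spherical model of the cell(s) carrying $[p,x]$, $[p,y]$ and $[x,y]$, using $v_i\cdot v_j\le 0$ and the fact that $p$ is a vertex of the carriers of both $x$ and $y$, choosing $p$ among the available common vertices (say, the one nearest $\gamma$) when this matters. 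This is the delicate computational point: in the extreme "all–right" configurations — all edges exactly $\pi/2$ and $\operatorname{supp}(x)\cap\operatorname{supp}(y)$ a single vertex — the angle is exactly $\pi/2$, so it is here that the strictness of the hypothesis $d(x,y)<\pi/2$ must be exploited with care. Since $p\in\operatorname{supp}(x)\cap\operatorname{supp}(y)\subseteq X\cap Y$, this finishes the proof.
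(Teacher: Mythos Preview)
Your route is genuinely different from the paper's. The paper never inducts on the number of cells crossed by $\gamma$; instead it fixes a vertex $v$ of $X$, sweeps out a ruled surface $S$ by the geodesic rays from $v$ through the points of $\gamma$, and develops $S$ locally isometrically into $\mathbb{S}^2$ with $v$ sent to the north pole (following \cite[Lemma~I.6.4]{Davis}). If $X\cap Y=\emptyset$ one checks that the image $f(\gamma)$ must exit the region $f(S)$ (which contains the northern hemisphere), and elementary spherical geometry then forces $f(\gamma)$ to have length $\ge\pi/2$, contradicting $d(x,y)<\pi/2$. The point $p$ is taken to be this same vertex $v$, and the angle bound is read off from the developed picture. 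This unrolling handles the whole geodesic at once, so no ``compatibility at transition points'' ever arises; that is exactly the difficulty your approach runs into.

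Your base case $N=1$ is clean and correct, but the inductive step has a real gap, and the fix you sketch does not close it. At the break point $q$ the incoming and outgoing directions of $\gamma$ lie at distance $\ge\pi$ in $\Lk(q,Z)$, so the ``$N{=}1$ analysis'' --- which requires distance $<\pi/2$ to force a common vertex --- does not apply there. Nothing you have said explains why the vertex $p_1$ produced on the $[x,q]$ side should agree with (or even be comparable to) the vertex $p_2$ produced on the $[q,y]$ side; in particular your stronger claim, that some single vertex lies in the carrier of $\gamma(t)$ for every $t$, is not something the paper asserts or needs. If you want to push your method through, you would have to replace the link argument at $q$ by something that genuinely ties the two sides together --- and at that point you are essentially rediscovering the development trick.

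Your caution about the angle step is well placed: in the all-right configuration you describe, with $\operatorname{supp}(x)\cap\operatorname{supp}(y)$ a single vertex $p$, one does get $\angle_p(x,y)=\pi/2$ on the nose even though $d(x,y)<\pi/2$. The paper's proof is terse here (``also follows from spherical geometry''), and the strict inequality is a delicate point in the statement itself rather than a defect special to your argument.
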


\begin{proof}
Suppose that $X \cap Y  = \emptyset$. Let $\gamma$ be a  unit speed geodesic connecting $x$ and $y$ so that $\gamma (0) = x$ and $y \in \gamma ([0, \pi/2])$. Let $a < 0$ and $b>0$ so that $\gamma([a,b])$ is a connected component of the intersection of $\gamma$ with $X$. Let $v$ be a vertex of $X$, and let $A_v$ be the union of all cells containing $v$. Then $X \subset A_v$. We pick $v$ so that $d(v,\gamma(a)) > d(v, \gamma(b))$. 

Let $B_v$ be the closed $\pi/2$-ball centered at $v$. Then $B_v$ is isometric to the spherical cone $\Cone (\Lk (v, Z))$ on the link of $v$ in $Z$. If $\gamma$ passes through $v$, then $d(v,y) < \pi/2$. Hence $ v \in Y$, and thus $v \in X \cap Y$, which contradicts the above assumption. So $\gamma$ does not pass through $v$. 

For each $t$ such that $\gamma(t) \in A_v$, let $s_t$ be the geodesic segment connecting $v$ passing through $\gamma(t)$ with length $\max (d(v,\gamma(t)), \pi/2)$.  Let $S$ be the surface defined as the union of all such $s_t$. In the same way as in \cite[Proof of Lemma I.6.4]{Davis}, the surface $S$ is  a union of triangles with common vertex $v$ glued together in succession along $\gamma$. Thus we can develop an $S$ along $\gamma$ locally isometric to $\mathbb{S}^2$, i.e. there is a map $f \colon S \longrightarrow \mathbb{S}^2$ that is a local isometry such that $f(v)$ is the North pole. Hence, $f(\gamma)$ is a geodesic in $\mathbb{S}^2$ that misses $f(v)$. Also, $f(S)$ contains the Northern hemisphere of $\mathbb{S}^2$.

The image $f(\gamma)$ cuts inside the region $f(S)$, which contains the Northern hemisphere $N$ of $\mathbb{S}^2$, so $f(\gamma)$ has length $\geq \pi/2$. Let $d$ be such that $f(\gamma (d))$ is where  $f(\gamma)$ exits $f(S)$. It is not hard to see that $\pi/2 > d > b > 0$. Since $d(v,\gamma(a)) > d(v, \gamma(b))$, it follows that $d(f(v),f(\gamma(a))) > d(f(v), f(\gamma(b)))$. Hence $d(f(\gamma(0)), f(\gamma(d))) > \pi/2$ be spherical geometry, which is a contradiction since $\gamma$ has unit speed. Therefore, $X \cap Y \ne \emptyset$.  

We can pick the point $p$ to be $v$. That $\angle_p(x,y) < \pi/2$ also follows from spherical geometry. 
\end{proof}

\begin{lemma}\label{fundamental lemma}
Let $Z$ be a $\CAT(1)$ piecewise spherical simplicial complex in which all cells have size $\geq \pi/2$. Let $X,Y$ be cells of $Z$, and let $x \in X$, $y\in Y$. Let $X'$ (respectively, $Y'$) be a face of $X$ (respectively, $y$) whose interior contains $x$ (respectively, $y$). If $d(x,y) < \pi/2$, then $X'\cap Y'$ contains face $C$ such that $d(x, C) < \pi/2$.
\end{lemma}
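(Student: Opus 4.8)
The plan is to bootstrap Lemma~\ref{fundamental lemma} from Lemma~\ref{pre-fundamental lemma} by an induction that strips away, one vertex at a time, the directions in which $X'$ and $Y'$ fail to meet. First I would apply Lemma~\ref{pre-fundamental lemma} directly to the cells $X$ and $Y$ and the points $x,y$: since $d(x,y)<\pi/2$, we obtain $X\cap Y\neq\emptyset$ and a point $p_0\in X\cap Y$ with $\angle_{p_0}(x,y)<\pi/2$. The point $p_0$ can be taken to be a vertex $v$ of $X$ (indeed of $X\cap Y$), as produced in the proof of the previous lemma. The issue is that $v$ need not lie in the faces $X'$ and $Y'$ whose interiors contain $x$ and $y$; the whole content of this lemma is that the \emph{relevant} faces meet, not just the ambient cells.

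The key device is to pass to the link. Set $Z_1=\Lk(v,Z)$, which is again a $\CAT(1)$ piecewise spherical complex with all cells of size $\geq \pi/2$ (this is exactly the property used repeatedly in \cite[Section 12]{Davis} and follows because links of metric flag complexes are metric flag complexes). Since $\angle_v(x,y)<\pi/2$, the directions $\xi=$ (direction at $v$ toward $x$) and $\zeta=$ (direction at $v$ toward $y$) are points of $Z_1$ at distance $<\pi/2$. If $v\in X'$ and $v\in Y'$ we are essentially done: then $X'\cap Y'\ni v$ and taking $C$ to be the carrier of... more carefully, $C$ is the minimal face of $X'\cap Y'$ with $d(x,C)<\pi/2$, whose existence I would extract from the angle bound by choosing $C$ to be the face of $X'\cap Y'$ containing $v$ (and $d(x,v)\le d(x,y)<\pi/2$ in the degenerate step, or more generally $d(x,C)<\pi/2$ from $\angle_v(x,\cdot)$-estimates via spherical geometry as in Lemma~\ref{pre-fundamental lemma}). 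If instead $v\notin X'$ (say), then $\xi$ lies in the interior of a genuine cell $X_1$ of $Z_1$ corresponding to the face of $X$ spanned by $X'$ together with... rather, $X_1=\Lk(v,X)$ carries $\xi$, and the face of $X_1$ whose interior contains $\xi$ is $\Lk(v,X')$ if $v\in X'$, or else $X'$ itself sits inside a cell of $Z_1$ and we recurse.

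Concretely, I would induct on $\dim X+\dim Y$. In the link $Z_1$, the cells carrying $\xi$ and $\zeta$ have strictly smaller total dimension than $X$ and $Y$, their distance is still $<\pi/2$, and $Z_1$ satisfies the same hypotheses; by induction we get a face $C_1$ of (the carrier of $\xi$)$\,\cap\,$(the carrier of $\zeta$) with $d(\xi,C_1)<\pi/2$. Coning $C_1$ back off from $v$ (i.e. taking the face of $Z$ spanned by $v$ and $C_1$, or $C_1$ itself translated back into $Z$ via the identification $\Lk(v,Z)$-cells $\leftrightarrow$ faces of $\St(v,Z)$ containing $v$) produces the desired face $C$ of $X'\cap Y'$, and the distance bound $d(x,C)<\pi/2$ follows from the spherical-geometry estimate relating $d(x,\cdot)$ in $Z$ to $d(\xi,\cdot)$ in $\Lk(v,Z)$ together with $d(x,v)$, exactly as in the development-to-$\mathbb{S}^2$ argument of Lemma~\ref{pre-fundamental lemma}.

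The main obstacle I anticipate is the careful bookkeeping of which face is which under the link identifications: one must track that ``the face of $X$ whose interior contains $x$'' corresponds, in $\Lk(v,Z)$, to ``the face whose interior contains the direction $\xi$,'' and verify that a point $x$ in the interior of $X'$ with $v\in X'$ projects to a direction $\xi$ in the interior of $\Lk(v,X')$, while if $v\notin X'$ then $X'$ embeds as a face of a cell of $Z$ all of whose... this case analysis on whether $v$ lies in $X'$ and in $Y'$ is the delicate part. The geometry itself is entirely controlled by Lemma~\ref{pre-fundamental lemma} and elementary spherical trigonometry; the work is organizing the induction so that the dimension genuinely drops and the hypothesis ``all cells have size $\geq\pi/2$'' is preserved in every link, which is where I would cite \cite[Lemma I.7.4, Lemma 12.3.1]{Davis}.
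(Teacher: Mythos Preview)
Your proposal is correct and follows essentially the same approach as the paper: apply Lemma~\ref{pre-fundamental lemma} to obtain a common point $p\in X\cap Y$ with $\angle_p(x,y)<\pi/2$, pass to the link $\Lk(p,Z)$ (which again satisfies the hypotheses), apply induction there, and cone the resulting face back into $Z$. The only cosmetic difference is that the paper inducts on $\dim Z$ rather than on $\dim X+\dim Y$, and it absorbs your ``$v\in X'$ versus $v\notin X'$'' bookkeeping into a single terse case split on whether $d_Z(p,x)<\pi/2$ when reconstructing $C$ from the link face $D$; your more explicit accounting of the link identifications is arguably clearer.
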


\begin{proof}
We induct on the dimension of $Z$. The base case is when $Z$ has dimension $1$, in which case the lemma is obvious. Suppose that $Z$ has dimension $>1$. By Lemma \ref{pre-fundamental lemma}, there is a point $p \in X\cap Y$ such that $\angle_p(x,y) < \pi/2$. Let $u$ (respectively, $v$) be the intersection of the geodesic ray $px$ (respectively, $py$) with the link $\Lk(p,Z)$ of $p$ in $Z$. Since $\angle_p(x,y) < \pi/2$, we have $d_{\Lk(p,Z)}(u,v) < \pi/2$. Note that $\Lk(p,Z)$ is also a $\CAT(1)$ piecewise spherical simplicial complex in which all cells have size $\geq \pi/2$.

Let $U'$ (respectively, $V'$) be a face in $\Lk(p,Z)$ whose interior contains $u$ (respectively, $v$). Apply the induction hypothesis, there is a point $q \in \Lk(p,Z)$ such that  $U'\cap V'$ contains face $D$ such that $d(u, D) < \pi/2$. 

If $d_Z(p,x) < \pi/2$ , then let $C$ be the face in $Z$ that is spanned by $p$ and $D$. Otherwise, let $C$ be the simplex in $Z$ that corresponds to $C$ in $\Lk(p,Z)$. Then $C$ satisfies the condition in the lemma.
\end{proof}

Now we prove Lemma \ref{pi_1 piece rigidity}.

\begin{proof}[Proof of Lemma \ref{pi_1 piece rigidity}]
To show that $\phi(G_v)$ is a conjugate of a vertex group of $\pi_1(M)$, all we need to prove is that the action of $\phi(G_v)$ on the complex $\widetilde{T}$ fixes a vertex. Let $M_k$ be a locally symmetric piece of $M$. By Theorem \ref{gen by nil}, the group $\Gamma = \pi_1(M_k)$ is virtually generated by a collection $\mathcal{C}$ of nilpotent subgroups $N_1$, $N_2$, ..., $N_{r+1}$. We only need $\Gamma$ to be virtually generated by $3$ nilpotent groups that satisfies the conclusion of Theorem \ref{gen by nil}. So we, instead, write $\mathcal{C} = \{\Gamma_1, \Gamma_2, \Gamma_3\}$, for $\Gamma_1 = N_1$, $\Gamma_2 = N_2$, and $\Gamma_3$ is the (nilpotent) group generated by $N_3, N_4, ... N_{r+1}$. Observe that $\Gamma_i$'s satisfy the conclusion of Theorem \ref{gen by nil}. For each $i = 1, 2, 3$, let $F_i$ be the set of points that is fixed by all elements of $\Gamma_i$. 
\[F_i = \Fix(\Gamma_i).\]  Let 
\[W_{ij} = F_i \cap F_j,\]
for $i, j = 1, 2, 3$. By Theorem \ref{gen by nil}, each $W_{ij}$ is nonempty. It is clear that $F_i$'s and $W_{ij}$'s are convex. 

\begin{figure}
\begin{center}
\includegraphics[height=100mm]{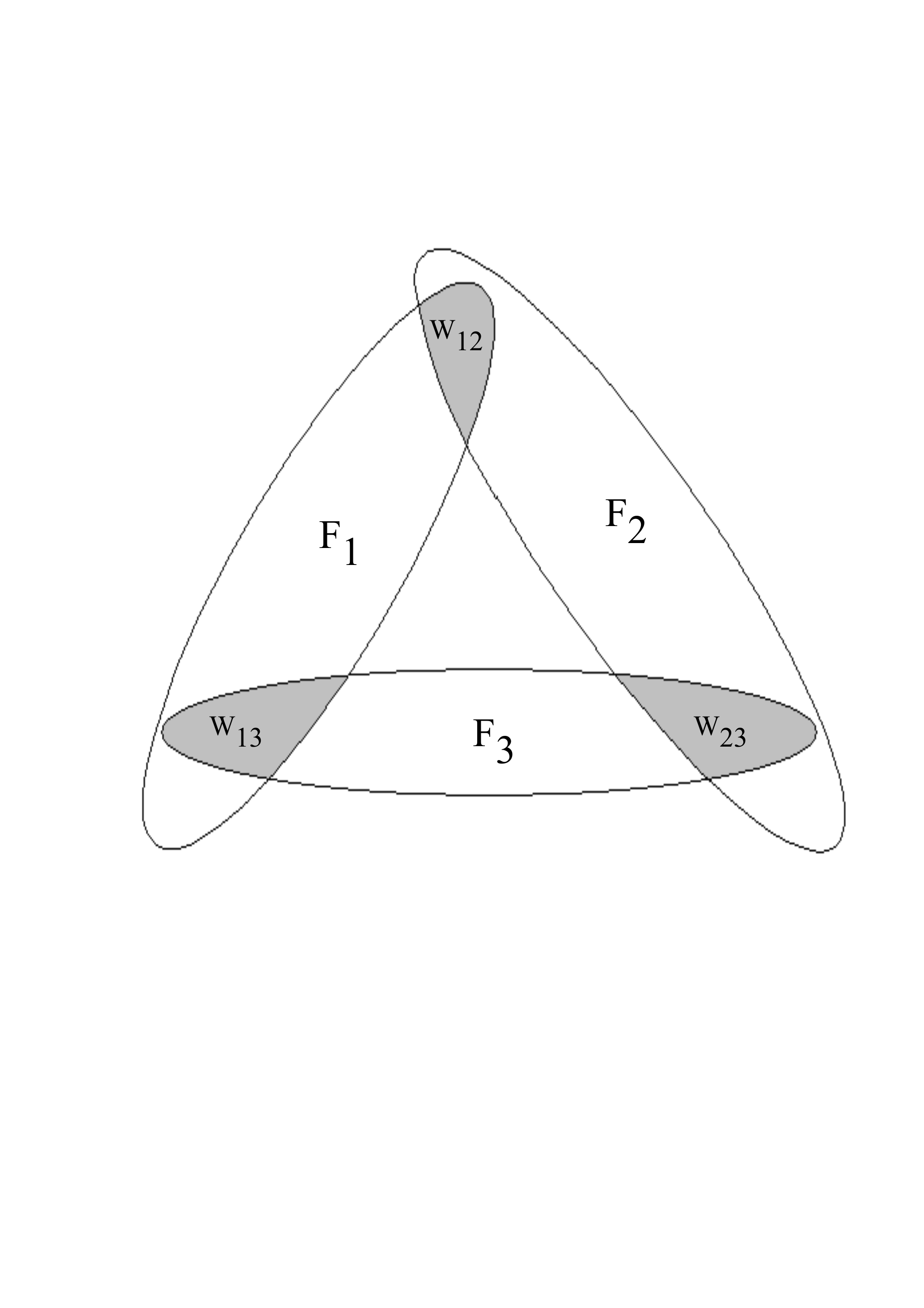}
\caption{The Fix sets $F_1$, $F_2$ and $F_3$ and their pairwise intersections.}
\end{center}
\end{figure}

Suppose that \[\cap_{i=1,2,3}F_i = \emptyset.\]
Let $x_i \in W_{i3}$, for $i = 1,2$, be such that the distance $d(x_1,x_2) = d(W_{13},W_{23})$. Observe that $x_i$ must lie on the boundary of $F_i$, for $i = 1,2$ since $x_1$ and $x_2$ are points realizing the distance between two convex set $W_{13}$ and $W_{23}$. 

Let $y \in W_{12}$. The geodesic $\gamma_{yx_1}$ (and $\gamma_{yx_2}$, respectively) lies in $F_1$ (and $F_2$, respectively) since $F_1$ and $F_2$ are convex. Without loss of generality, suppose that (see \cite{Bridson} for the definition of \emph{angle})
\[\angle_{x_1} (y,x_2) < \pi/2,\] 

We claim that there is a simplex $S$ in $W_{13}$ containing $x_1$ and the angle 
\[\angle_{x_1}(S,x_1x_2) < \pi/2.\]
Given the claim, it follows that $S$ contains some point $x_1'$ other than $x_1$ such that $d(x_1',x_2) < d(x_1,x_2)$, which is a contradiction to the choice of $x_1$ and $x_2$. Therefore,
\[\cap_{i=1,2,3}F_i \ne \emptyset.\] 
That is, $\Gamma$ has a finite index group $\Gamma'$ that fixes a point. Let $H$ be a finite index subgroup of $\Gamma'$ that is normal in $\Gamma$. Then the fix set of $H$ in $L$ is nonempty. Since $\Gamma$ acts on $\Fix(H)$ with bounded orbit, it follows that $\Gamma$ has a global fixed point.

Now we prove the claim. Let $Q$ be the simplex whose interior contains $x_1$. 

Suppose that $Q$ is a point.  Then $Q = \{x_1\}$. Let $L$ be the link of $x_1$. Then $L$ is a $\CAT(1)$ piecewise spherical simplicial complex.  Let $A$ (respectively, $B$) be the cell in $\widetilde{T}$ that intersects nontrivially with $x_1y$ (respectively, $x_1x_2$). Let $A' = \Lk(x_1,A)$ and $B' = \Lk(x_1, B)$. Let $u = L \cap x_1y$ and $v = L \cap x_1x_2$. Then $d(u,v) < \pi/2$ in $L$. Let $U$ (respectively, $V$) be a face of $A'$ (respectively, $B'$) whose interior contains $u$ (respectively, $v$). By Lemma \ref{fundamental lemma}, there is a face $C \subset U\cap V$ such that $d_L(C, v) < \pi/2$.  Let $\alpha$ (respectively, $\beta$) be the span of $x_1$ and $U$ (respectively, $V$). Then $\alpha \subset F_1$ and $\beta \subset F_3$. Let $S$ be the span of $x_1$ and $C$. Therefore, the edge $S \subset W_{13}$ and has angle $< \pi/2$ with $x_1x_2$.  

Suppose that $Q$ is not a point. Then $L:= \Lk(x_1, \widetilde{T})$ is the spherical join $\Lk(x_1, Q)*\Lk(Q,\widetilde{T})$. If $\angle_{x_1}(x_1x_2, Q) < \pi/2$. Then pick $q\in Q$ such that $\angle_{x_1}(x_1x_2, x_1q) < \pi/2$. Since $Q \subset W_{13}$ (because $x_1$ is in the interior of $Q$ and $x_1 \in W_{13}$). We can let $S$ be the edge $x_1q \subset W_{13}$.

Suppose that $\angle_{x_1}(x_1x_2, Q) \geq \pi/2$ (in which case we have equality). Then $x_1x_2$ intersects nontrivially with $\Lk(Q, \widetilde{T})$ at $v$. If $x_1y$ also intersects nontrivially with $\Lk(Q,\widetilde{T})$ at some point $u$, then argue as in the case $Q$ is a point using the fact that $\Lk(Q,\widetilde{T})$ is $\CAT(1)$ piecewise spherical simplicial complex in which all cells have size $\geq \pi/2$  and applying Lemma \ref{fundamental lemma}.

Suppose that $x_1y$ does not intersects nontrivially with $\Lk(Q,\widetilde{T})$. Then there is a point $q \in Q$ such that $x_1y$ intersects with $H := q* \Lk(Q,\widetilde{T})$ nontrivially. Let $u = x_1y \cap H$ and let $v = x_1x_2 \cap H$. Then $d_H(q,v) = \pi/2$ and $d_H(u,v) < \pi/2$. Therefore, the angle $\angle_q (u,v) < \pi/2$. By applying Lemma \ref{fundamental lemma} to the link $\Lk(q, H) = \Lk (Q, \widetilde{T})$, we deduce that there is  $C \subset \Lk(Q,\widetilde{T})$ such that the span $S$ of $Q$ and $C$ is contained in $W_{13}$ and $d_L(p,v) < \pi/2$. Thus $\angle_{x_1} (x_2, S) < \pi/2$.    





\end{proof}
The following theorem is an immediate corollary of Lemma \ref{pi_1 piece rigidity} (for the case where $\M_i$ has $\Q$-rank $>1$). This theorem is true if we drop the $\Q$-rank $> 1$ condition. The case where the pieces $M_i$'s of $M$ have $\Q$-rank $1$ is dealt with in \cite{Tampwrank1}. The case where $M_i$'s are negatively curved, locally symmetric manifolds is dealt with in \cite{Tamrigidity}. So we state it with no assumptions on the $\Q$-rank or rational structure of $M_i$'s.
\begin{theorem}\label{irr vertex group}
Let $M$ be a piecewise locally symmetric space. Assume that each piece in the decomposition of $M$ is irreducible. Let $\phi \colon \pi_1(M) \longrightarrow \pi_1(M)$ be an automorphism. If $G_v$ is a vertex group $\pi_1(M)$, then $\phi(G_v)$ is a conjugate of some vertex subgroup of $M$.
\end{theorem}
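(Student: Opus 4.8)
The plan is to deduce this directly from Lemma \ref{pi_1 piece rigidity} together with the two companion results cited just above. First I would observe that since $\phi$ is an automorphism of $\pi_1(M)$, its restriction $\phi|_{G_v}\colon G_v \longrightarrow \pi_1(M)$ is an injective homomorphism. By the hypothesis that the pieces are irreducible, the vertex group $G_v$ is the fundamental group of an irreducible locally symmetric piece $M_v$ of $M$. So the statement is precisely the assertion that an injective homomorphism from the fundamental group of an irreducible locally symmetric piece into $\pi_1(M)$ has image a conjugate of a vertex group, which is exactly the conclusion of Lemma \ref{pi_1 piece rigidity} in the $\Q$-rank $> 1$ case.

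Concretely, I would split into cases according to the type of $M_v$. If $M_v$ has $\Q$-rank $> 1$, Lemma \ref{pi_1 piece rigidity} applies verbatim and gives that $\phi(G_v)$ is conjugate to a vertex group; since being a vertex group is preserved by conjugation, we are done. If $M_v$ has $\Q$-rank $1$ (a possibly higher-$\R$-rank arithmetic piece with $\Q$-rank one), the same conclusion is proved in \cite{Tampwrank1}. If $M_v$ is negatively curved, i.e. the $\R$-rank one case, possibly non-arithmetic, it is proved in \cite{Tamrigidity}. Assembling these three cases according to the piece $M_v$ yields the theorem with no restriction on the $\Q$-rank or rational structure of the pieces, which is why it is stated here in full generality. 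The only bookkeeping point is the distinction between a vertex group and a conjugate of a vertex group: since $\phi$ is a global automorphism and conjugates of vertex groups are exactly the vertex stabilizers in the development $\widetilde{T}$, the choice of representative is immaterial.

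I do not expect a genuine obstacle in the $\Q$-rank $> 1$ case; the real work has already been carried out in Lemma \ref{pi_1 piece rigidity}, which runs Farb's generation theorem (Theorem \ref{gen by nil}) against the $\CAT(0)$ combinatorics of Lemma \ref{fundamental lemma} to force a global fixed point of $\phi(G_v)$ on $\widetilde{T}$. The genuine difficulty — precisely the reason the low-rank cases are deferred to \cite{Tampwrank1} and \cite{Tamrigidity} — is that Farb's argument requires the lattice to be generated up to finite index by nilpotent subgroups whose fixed-point sets in $\widetilde{T}$ intersect in the right way; in $\Q$-rank $1$ the relevant nilpotent subgroups are the (possibly non-abelian) cusp subgroups, and one must instead exploit that these must be carried into edge or higher-codimension cell groups in order to pin down a fixed vertex. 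That analysis, however, lies outside the present argument, which only needs to invoke its conclusion.
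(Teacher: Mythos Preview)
Your proposal is correct and matches the paper's own argument essentially verbatim: the paper states that the theorem is an immediate corollary of Lemma \ref{pi_1 piece rigidity} in the $\Q$-rank $>1$ case and defers the $\Q$-rank $1$ and negatively curved cases to \cite{Tampwrank1} and \cite{Tamrigidity} respectively, exactly as you outline. Your additional remarks about why the low-rank cases require separate treatment are accurate but go slightly beyond what the paper records here.
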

Therefore, any automorphism of $\pi_1(M)$ preserves that complex of groups structure of $\pi_1(M)$ in the above sense. 

\section{$\Out(\pi_1(M))$: twists and turns}\label{Out}

If $M$ is finite-volume, complete, irreducible, locally symmetric and nonpositively curved of noncompact type and of dimension $> 2$, then by the Mostow Rigidity Theorem, $\Out(\pi_1(M)) \cong \Isom(M, g_{loc})$, where $g_{loc}$ is the locally symmetric metric on $M$. This implies that $\Out(\pi_1(M))$ is finite since $\Isom(M)$ is finite by Bochner's theorem. One might expect that if $M$ is piecewise locally symmetric, given the Theorem \ref{piece rigidity}, the group $\Out(\pi_1(M))$ will also be finite. However, this is not true, since there can be infinite order homeomorphisms that we call \emph{twists}. (In the $2$-dimensional case, these are Dehn twists in surface topology.)   

To warm up, we give two examples where $\Out(\pi_1(M))$ is infinite and what describe twists are in these cases.

\subsection{Examples of infinite $\Out(\pi_1(M))$.} 
\subsubsection{The pieces $M_i$'s have $\Q$-rank $1$}

Firstly we consider the case of piecewise $\Q$-rank $1$ manifolds. For example, let $M$ be the double of a hyperbolic manifold $N$ with one cusp. Let $M_1$ and $M_2$ be the two copies of $N$ in $M$. Then $\pi_1(M) = G_1*_CG_2$, where $G_i = \pi_1(M_i)$, for $i = 1, 2$, and $C$ is the fundamental group of the cusp. Pick an element $c \ne 1$ in the center of $C$. Let $\phi \colon \pi_1(M) \longrightarrow \pi_1(M)$ be induced by 
\[\phi(g) = 
\begin{cases}
&  g \; \qquad \qquad \text{if} \; g \in G_1 \\
& cgc^{-1} \qquad \text{if} \; g \in G_2.
\end{cases}\]
It is clear that $\phi$ extends to an automorphism of $G_1*_CG_2$ since $\phi$ is an automorphism when restricted to $G_1$ and $G_2$ and agrees on the intersection of $G_1$ and $G_2$. 

\begin{lemma}\label{twist}
Let $M$ and $\phi$ be as above. Then $\phi$ is an infinite order element of $\Out(\pi_1(M))$.
\end{lemma}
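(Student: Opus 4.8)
The plan is to show that no power $\phi^n$ ($n\neq 0$) is an inner automorphism of $\pi_1(M)=G_1\ast_C G_2$. First I would compute $\phi^n$ explicitly: since $c$ is central in $C$ and $\phi$ is the identity on $G_1$, an easy induction gives $\phi^n(g)=g$ for $g\in G_1$ and $\phi^n(g)=c^n g c^{-n}$ for $g\in G_2$. So $\phi^n$ is conjugation by $c^n$ on $G_2$ and trivial on $G_1$. Suppose for contradiction that $\phi^n$ is inner, say $\phi^n=\mathrm{conj}_h$ for some $h\in\pi_1(M)$. Then $h$ centralizes all of $G_1$, and $c^{-n}h$ centralizes all of $G_2$.

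The key step is to understand centralizers in the amalgam $G_1\ast_C G_2$. Here $G_1=\pi_1(N)$ is the fundamental group of a finite-volume hyperbolic manifold with one cusp, so $G_1$ is non-elementary (it contains a free group of rank $2$), and likewise $G_2$. An element $h$ of an amalgamated free product that centralizes a non-elementary subgroup of one factor must lie in a conjugate of that factor; more precisely, using the action of $\pi_1(M)$ on its Bass--Serre tree $\mathcal{T}$, the subgroup $G_1$ fixes exactly one vertex $v_1$ of $\mathcal{T}$ (its stabilizer is $G_1$ itself, and $G_1$ is not contained in any edge group $C$, nor in a conjugate of $G_2$, since $C$ is virtually abelian while $G_1$ is not). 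An element $h$ commuting with $G_1$ must preserve $\mathrm{Fix}(G_1)=\{v_1\}$, hence $h\in\mathrm{Stab}(v_1)=G_1$. By the same argument applied to $G_2$, the element $c^{-n}h$ lies in $G_2$. Thus $h\in G_1$ and $h\in c^n G_2$. Now $h\in G_1$ together with $h$ centralizing $G_1$ forces $h\in Z(G_1)$, which is trivial since $G_1$ is a non-elementary discrete subgroup of $\mathrm{Isom}(\mathbb{H}^n)$ (centerless). So $h=1$, and then $1=h\in c^n G_2$ forces $c^n\in G_2$; but $c\in C\subseteq G_2$ already, so this is automatic --- the real contradiction comes from the other side: $c^{-n}h=c^{-n}\in G_2$ must centralize $G_2$, i.e. $c^n\in Z(G_2)$, which is again trivial. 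Since $c\neq 1$ has infinite order (it is a nontrivial element of the center of the cusp group $C\cong\mathbb{Z}^{\,n-1}$, hence of infinite order), $c^n\neq 1$ for $n\neq 0$, a contradiction.

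I expect the main obstacle to be the Bass--Serre tree bookkeeping in the centralizer computation: one must be careful that $\mathrm{Fix}(G_i)$ really is a single vertex (ruling out that $G_i$ could be elliptic fixing an edge, or that $h$ could translate along an axis while still commuting with $G_i$), and that $G_i$ and $C$ are genuinely centerless / virtually abelian respectively so that the "non-elementary subgroup lies in a unique factor conjugate" principle applies. An alternative, perhaps cleaner, route avoiding trees: note $\phi^n$ restricted to $G_1$ is the identity and restricted to $G_2$ is $\mathrm{conj}_{c^n}$; if $\phi^n=\mathrm{conj}_h$ then $\mathrm{conj}_h|_{G_1}=\mathrm{id}$ gives $h\in Z_{\pi_1(M)}(G_1)$, and one shows directly from the amalgam normal form that $Z_{\pi_1(M)}(G_1)\leq G_1$, hence $h\in Z(G_1)=1$, forcing $\mathrm{conj}_{c^n}|_{G_2}=\mathrm{id}$, i.e. $c^n\in Z(G_2)=1$, contradiction. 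Either way the conclusion is that $\phi$ has infinite order in $\mathrm{Out}(\pi_1(M))$.
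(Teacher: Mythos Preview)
Your proposal is correct and follows essentially the same approach as the paper: both argue that an inner automorphism equal to $\phi^n$ would be conjugation by an element $h$ centralizing $G_1$, use the Bass--Serre tree to force $h\in G_1$ (since $h$ must preserve $\Fix(G_1)=\{v\}$), conclude $h\in Z(G_1)=1$, and then derive a contradiction from the nontrivial action on $G_2$. If anything, you are more careful than the paper in spelling out why $\Fix(G_1)$ is a single vertex, why $Z(G_i)=1$, and why $c^n\neq 1$.
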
 

\begin{proof}
For all $k \in \N$, $\phi^k$ is the identity on $G_1$ and conjugation by $c_0^k$ on $G_2$. Suppose $\phi^k$ is an inner automorphism of $G_1*_CG_2$ for some $k \in \N$. Then there exists $g \in G_1*_CG_2$ such that $\Conj(g)\circ\phi$ is the identity on $G_1*_CG_2$. This implies that for $g_1 \in G_1$, we have $g\phi^k(g_1)g^{-1} = g_1$. So $gg_1g^{-1} = g_1$ since $\phi^k$ is the identity on $G_1$. Thus, $g$ is in the centralizer of $G_1$ in $G_1*_CG_2$. 

We claim that $g$ is in the centralizer of $G_1$. Consider the action of $G_1*_CG_2$ on the Bass Serre tree $T$ of $G_1*_CG_2$. The fixed set of $G_1$ is a vertex $v$. Since $g$ commutes with every element of $G_1$, the fixed set of $G_1$ must be preserved by $g$. Hence $g$ belongs to the stabilizer of $v$, which is $G_1$. Therefore, $g$ is in the centralizer of $G_1$ and thus $g =1$. This implies that $\phi$ is the identity homomorphism, which is a contradiction since $\phi$ acts nontrivially on $G_2$. Hence $\phi$ is represents an infinite order element in $\Out(\pi_1(M))$.
\end{proof}

The automorphism $\phi$ of Lemma \ref{twist} is analogous to, on the level of fundamental groups, a Dehn twist in surface topology. For each loop $c$ in the center of $C$, we define a \emph{twist} around a loop $c$ to be an automorphism constructed as above. Up to conjugation, it is the identity on one vertex subgroup and conjugation by an element in the center of the edge subgroup on the other vertex subgroup. By Lemma~\ref{twist}, twists induce infinite order elements of $\Out(\pi_1(M))$. 

\subsubsection{The pieces $M_i$'s have $\Q$-rank $2$}
Although we have been focusing on the case where the $M_i$'s are irreducible, for this example we choose $M_i$ to be a product of two $\Q$-rank $1$ locally symmetric manifolds, since in this case it is easier to see the main features of the proof for the general case. 

Let $M$ be the manifold similar to the example given in the introduction. That is, $M$ is obtained by using $D_3$ to glue six copies of $S_1\times S_2$. We take $S_j$, for $j =1, 2$, to be the usual compactification of a finite volume, hyperbolic manifolds $Y_j$ (rather than just surfaces) with one cusp whose cross section that is a torus. 

Like in the $\Q$-rank $1$ case, $M$ has \emph{twists}, which comes from the center of the fundamental group of each codimension $1$ stratum of $M$. The following is an example.

For each $M_i$, for $i = 1, 2,..., 6$, we denote by $\partial_j M_i$ the boundary stratum the corresponds to $S_j\times\partial S_{j+1}$ (where addition is mod $2$). Note that the center of $\pi_1(S_j\times\partial S_{j+1}) \cong \pi_1(S_j)\times\pi_1(\partial S_{j+1})$ is $\pi_1(\partial S_{j+1})$. 

Without loss of generality, suppose that $\partial_2M_1$ is glued to $\partial_2M_2$. Let $c$ be an element in the center of the fundamental group of $\partial_1M_1$. Let $\phi$ be the homomorphism $\pi_1(M) \longrightarrow \pi_1(M)$ defined as follows.
\[\phi(g) = 
\begin{cases}
&  g \; \qquad \qquad \text{if} \; g \in \pi_1(M_i) \;\text{for $i = 3, 4, 5, 6$}, \\
& cgc^{-1} \qquad \; \text{if} \; g \in \pi_1(M_i) \;\text{for $i = 1, 2$} .
\end{cases}\] 
We see that $\phi$ is a homomorphism since it is a homomorphism on each piece and agrees on the intersections of the pieces. 
\begin{lemma}
Let $M$ and $\phi$ be as above. Then $\phi$ has infinite order in $\Out(\pi_1(M))$.
\end{lemma}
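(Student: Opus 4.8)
The plan is to mimic the proof of Lemma~\ref{twist}, replacing the Bass--Serre tree of an amalgamated product with the action of $\pi_1(M)$ on the universal cover $\widetilde{T}$, which is a developable complex of groups by Theorem~\ref{developable}. Suppose for contradiction that $\phi^k$ is inner for some $k \geq 1$, say $\phi^k = \Conj(g)$ for some $g \in \pi_1(M)$. Since $\phi^k(h) = c^k h c^{-k}$ for $h \in \pi_1(M_1)$ and $\phi^k(h) = h$ for $h \in \pi_1(M_3)$ (as $M_1$ and $M_3$ are disjoint pieces and $c$ comes from $\partial_1 M_1$), the element $g^{-1}c^k$ centralizes $\pi_1(M_1)$ and $g$ itself centralizes $\pi_1(M_3)$. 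First I would record that each vertex group $G_v = \pi_1(M_i)$ fixes exactly one vertex $v$ of $\widetilde{T}$ (this is part of the complex-of-groups structure and is used in the proof of Lemma~\ref{centerless pi_1}), so any element centralizing $G_v$ must stabilize $\Fix(G_v) = \{v\}$ and hence lies in $\Stab(v) = G_v$ (up to conjugation, i.e. in the corresponding conjugate of the vertex group).

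Next I would run this argument at two different vertices. Let $v_1, v_3$ be the vertices fixed by $\pi_1(M_1)$ and $\pi_1(M_3)$ respectively (after choosing basepoints so these are literal vertex groups). From $g$ centralizing $\pi_1(M_3)$ we get $g \in \pi_1(M_3)$; but $\pi_1(M_3)$ is the fundamental group of an irreducible locally symmetric space, which is centerless, and in fact I claim $g$ must be central in $\pi_1(M_3)$: indeed $\Conj(g)$ is the identity on $\pi_1(M_3)$ since $\phi^k$ is, so $g$ centralizes $\pi_1(M_3)$ from within, forcing $g = 1$. Wait --- more carefully: $\phi^k$ restricted to $\pi_1(M_3)$ is the identity, and $\Conj(g)$ agrees with $\phi^k$ on all of $\pi_1(M)$, so $\Conj(g)|_{\pi_1(M_3)} = \id$, meaning $g$ commutes with all of $\pi_1(M_3)$; combined with $g \in \pi_1(M_3)$ (from the fixed-vertex argument) and $Z(\pi_1(M_3)) = 1$, we conclude $g = 1$. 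Then $\phi^k = \id$, so in particular $\phi^k$ is the identity on $\pi_1(M_1)$, i.e. $c^k$ is central in $\pi_1(M_1)$ --- but $c$ is a nontrivial element of $\pi_1(\partial_1 M_1)$ which (being a nontrivial normal abelian, in fact central in the parabolic, loop) is non-central in the irreducible lattice $\pi_1(M_1)$, and no power of it can be central since $\pi_1(M_1)$ is centerless and torsion-free; contradiction.

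The main obstacle I expect is the subtlety about conjugates: the vertex groups of $\pi_1(M)$ are only well-defined up to conjugacy, so the statements "$g$ centralizes $\pi_1(M_3)$'' and "$\Fix(\pi_1(M_3))$ is a single vertex'' must be made with a fixed choice of lift, and one must be careful that the two pieces $M_1$ and $M_3$ in the definition of $\phi$ really are glued so that their fundamental groups sit in $\pi_1(M)$ as genuinely disjoint (non-adjacent) vertex groups, so that $\phi^k$ is honestly the identity on one and honest conjugation by $c^k$ on the other. I would verify this by inspecting the hexagon-of-groups structure: $M_3$ (say) is not adjacent to $M_1$ in the $D_3$-pattern, hence no edge or higher-dimensional cell group is shared, hence $\phi$ is consistently defined and its effect on the relevant vertex groups is as claimed. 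Once that bookkeeping is done, the Bass--Serre-style fixed-point argument goes through verbatim using $\widetilde{T}$ in place of the tree, giving $g = 1$ and then the final contradiction from $c^k \notin Z(\pi_1(M_1)) = 1$.

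\begin{proof}
As in Lemma~\ref{twist}, it suffices to show $\phi^k$ is not inner for any $k \geq 1$. For $h \in \pi_1(M_i)$ with $i \in \{3,4,5,6\}$ we have $\phi^k(h) = h$, and for $h \in \pi_1(M_i)$ with $i \in \{1,2\}$ we have $\phi^k(h) = c^k h c^{-k}$; since $M_1$ and $M_3$ are non-adjacent pieces in the hexagon-of-groups structure of $\pi_1(M)$, these formulas are consistent and $\phi$ is a well-defined automorphism.

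Suppose $\phi^k = \Conj(g)$ for some $g \in \pi_1(M)$. Then $\Conj(g)$ is the identity on the vertex group $\pi_1(M_3)$, so $g$ centralizes $\pi_1(M_3)$. Consider the action of $\pi_1(M)$ on the universal cover $\widetilde{T}$, which is developable by Theorem~\ref{developable}. The fixed set of $\pi_1(M_3)$ in $\widetilde{T}$ is a single vertex $w$ (the argument of Lemma~\ref{centerless pi_1}). Since $g$ commutes with $\pi_1(M_3)$, it preserves $\Fix(\pi_1(M_3)) = \{w\}$, so $g \in \Stab(w) = \pi_1(M_3)$. But then $g \in \pi_1(M_3)$ and $g$ centralizes $\pi_1(M_3)$, so $g \in Z(\pi_1(M_3)) = 1$ because $\pi_1(M_3)$ is the fundamental group of an irreducible locally symmetric manifold and hence centerless. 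Therefore $\phi^k = \id$; in particular $\phi^k$ is the identity on $\pi_1(M_1)$, so $c^k$ is central in $\pi_1(M_1)$. As $\pi_1(M_1)$ is centerless and torsion-free, this forces $c^k = 1$, and then $c = 1$, contradicting the choice of $c$. Hence $\phi$ has infinite order in $\Out(\pi_1(M))$.
\end{proof}
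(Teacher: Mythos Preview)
Your argument is correct and follows exactly the strategy the paper intends: the paper's proof is simply ``exactly the same as that of Lemma~\ref{twist}'', and what you have written is the natural adaptation of that proof, replacing the Bass--Serre tree by the developable complex $\widetilde{T}$ and choosing a vertex group (your $\pi_1(M_3)$) on which $\phi^k$ is literally the identity.

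One correction to your justification: in this particular example the pieces $M_i$ are \emph{not} irreducible locally symmetric manifolds --- the paper explicitly chose $M_i = Y_1 \times Y_2$, a product of two $\Q$-rank~$1$ hyperbolic manifolds, precisely to make the example easier to visualize. So your sentence ``$\pi_1(M_3)$ is the fundamental group of an irreducible locally symmetric manifold and hence centerless'' is wrong as stated. The conclusion you need, $Z(\pi_1(M_3)) = 1$, is still true: $\pi_1(M_3) \cong \pi_1(Y_1) \times \pi_1(Y_2)$, and each factor is a nonuniform lattice in the isometry group of a hyperbolic space, hence centerless, so the product is centerless. With that fix the proof goes through unchanged.
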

\begin{proof}
The proof is exactly the same as that of Lemma \ref{twist}.
\end{proof}
The main feature of the construction of the above twist is the following. Firstly we pick a codimension $1$ stratum $\partial_1M_1$ of a piece $M_1$ with center containing a nontrivial element $c$. Then we define $\phi$ to be by conjugation by $c$ when restricted to $\pi_1 (M_1)$. So the restriction of $\phi$ to $\pi_1(\partial_1M_1)$ is the identity, but $\phi$ restricted to $\pi_1 (\partial_jM_1)$ for $j \ne 1$ is not. In this case $j = 2$ only. So we need to define $\phi$ on pieces adjacent to $M_1$ along $\partial_j M_1$ (for $j \ne 1$) to be conjugation by $c$. Then we define $\phi$ on the pieces adjacent to these accordingly and so on.

\subsection{The structure of $\Out(\pi_1(M))$}
We now give the set-up in which Theorem \ref{Out(pi_1(M))} is proved. 

Let $\mathcal{M}$ be the disjoint union of all the complete locally symmetric spaces corresponding to the locally symmetric pieces $M_i$ in the decomposition of $M$. By Theorem~\ref{irr vertex group}, an element of $\Out(\pi_1(M))$ has to restrict to an isomorphism between vertex groups up to conjugation, which, by the Mostow Rigidity Theorem, is induced by an isometry with respect to the complete, locally symmetric metric on each of the pieces. The restriction map is a homomorphism from $\Out(\pi_1(M))$ to $\Isom(\mathcal{M})$. We call this induced map 
\[\eta \colon \Out(\pi_1(M))\longrightarrow \Isom(\mathcal{M}).\]
Let $\mathcal{A}(M) $ be the image of $\Out(\pi_1(M))$ under $\eta$. We call the elements of $\mathcal{A}(M)$ \emph{turns}. Then $\mathcal{A}(M)$ is the subgroup of $\Isom(\mathcal{M})$ of isometries of $\mathcal{M}$ whose restriction to the boundaries of each pair of strata that are identified in the decomposition of $M$ are homotopic with respect to the gluing. 

An isometry of $\mathcal{M}$ permutes the pieces $\mathcal{M}_i$'s in the decomposition of $\mathcal{M}$. Thus, for the case where $M$ is of finite type and the decomposition of $M$ has $k$ pieces, there is a homomorphism $\varphi$ from $\Isom(\mathcal{M})$ to the group of permutations of $k$ letters. Let $P$ be the image of $\varphi$. The kernel of $\varphi$ contains precisely those isometries of $\mathcal{M}$ that preserve each of the components of $\mathcal{M}$. Hence, $\Isom(\mathcal{M})$ has the structure of an extension of groups as follows. 
\[ 1 \longrightarrow \bigoplus_{i = 1}^k\Isom(\mathcal{M}_i) \longrightarrow \Isom(\mathcal{M}) \longrightarrow P \longrightarrow 1.\]
It follows that $\Isom(\mathcal{M})$ is finite since it has a finite index subgroup that is isomorphic to the direct sum of the isometry groups of the components of $\mathcal{M}$, which are finite. Thus, $\mathcal{A}(M)$ is finite if $M$ is of finite type. 

We define the \emph{group of twists} of $M$ to be $\mathcal{T}(M) = \Ker\eta$. Then,

\[1 \longrightarrow \mathcal{T}(M) \longrightarrow \Out(\pi_1(M)) \longrightarrow \mathcal{A}(M) \longrightarrow 1.\] 

The group $\mathcal{T}(M)$ is generated by elements called \emph{twists}, which are automorphisms of $\pi_1(M)$ of the following form. The fundamental group $\pi_1(M)$ has the structure of a complex of groups $G(X)$. Suppose that an edge group $G_e$ has nontrivial center. Let $c \in Z(G_e)$ be a non-identity element. Let $v$ be an end point of $e$. A piece $u$ is \emph{$\hat{e}$-adjacent} to $v$ if $u$ can be connected to $v_1$ by a finite sequence of chamber $s_1, s_2, ... s_k$ such that $u = s_1$, $v = s_k$, and for each $i$, the chamber $s_i$ and $s_{i+1}$ and the codimension $1$ they share is adjacent to the edge NOT of type $e$. The \emph{$(e, v)$-block} is the set of pieces that is $\hat{e}$-adjacent to $v_1$.  Let $\{u_j\}$ be the set of vertices the correspond to the $(e, v)$-block.  

Let $\phi$ be an automorphism of $\pi_1(M)$ that is obtained by gluing the identity automorphism of each vertex groups except for $G_{u_j}$'s, on which the automorphism to glue is conjugation by $c$. For the same reasons as in the proof of Lemma \ref{twist}, the map $\phi$ is an automorphism that is either trivial or of infinite order. If the $(e,v)$ block is a proper subset of the set of pieces $M_i$, then $\phi$ is nontrivial and thus, has infinite order. We call such an automorphism a \emph{twist around the loop $c$}. At this point, we have only defined twists on the level of fundamental groups. We will see later that twists can be realized as homeomorphisms.       

For each edge $e$ such that $c$ belong to the center $Z(G_e)$, if $v$ is an end point of $e$, then there is a twist along $c$ in the $(e,v)$-block. Since each twist happens in the union of the chamber in a block, we call the union of all the blocks with nontrivial twists a \emph{twist region}.

Now we prove Theorem~\ref{Out(pi_1(M))}.

\begin{proof}[Proof of Theorem \ref{Out(pi_1(M))}]
We need to show that $\mathcal{T}(M)$ is abelian. To show this, we show that $\mathcal{T}(M)$ is generated by twists and that twists commute.

By definition, the action of a twist on the fundamental group of each of the pieces in the decomposition of $M$ is the identity map up to conjugation by an element in the center of a cusp subgroup. By Mostow rigidity, the image of a twist under $\eta$ is the identity isometry.  Thus, twists belong to $\mathcal{T}(M) = \Ker\eta$. 

Now if $\phi \in \Ker\eta$, then the isometry that $\phi$ induces on $\mathcal{M}$ is the identity map. Hence, for each $i$, the restriction of $\phi$ on $\pi_1(M_i)$ is the identity map in $\Out(\pi_1(M_i))$. Let $\phi_{M_i}$ be the restriction of $\phi$ to $\pi_1(M_i)$. If $M_i$ and $M_j$ are adjacent pieces that are glued together along $S$, then the restriction of $\phi_{M_i}$ to $\pi_1(S)$ is equal to that of $\phi_{M_j}$. This implies that $\phi_{M_i}$ and $\phi_{M_j}$ differ by a conjugation by an element $c$ in the center of $\pi_1(S)$. 

Without lost of generality, assume that $\phi_{M_a}$ is the identity automorphism of $\pi_1(M_a)$, for some $a$. Suppose that $M_b$ is adjacent to $M_a$ along $S$. Let $\tau_{c_1}$ be such that $\phi_{M_b}$ is conjugation by ${c_1}^{-1}$. Let $\tau_{c_1}$ be the $(S, M_b)$-twist along $c_1$. Let $\phi_1 = \tau_{c_1}\circ\phi$. Then $\phi_1 \in \mathcal{T}(M)$ and the restriction of which to the fundamental groups of $M_a$ and $M_b$ is the identity automorphism. One just needs to see that $M_a$ does not belong to the $(S, M_b)$-block, which is true because otherwise, $\phi_{M_a}$ and $\phi_{M_b}$ cannot differ by a conjugation by elements in $\pi_1(S)$ in the first place. 

Now, we repeat this process for another piece that is adjacent to one of the pieces to which the restriction of $\phi_1$ is the identity automorphism to get $\phi_2$ and so on. It is not hard to see that for each $i$, the automorphism $\phi_i \in \mathcal{T}(M)$. Also, the twist region of $\tau_{c_i}$ does not contain any pieces to which the restriction of $\phi_{i-1}$ is the identity automorphism. This process terminates at some point if $M$ has finitely many pieces. Therefore, $\phi$ is a product of twists. 

Since twists are defined for each element $c$ in the center of an edge group $G_e$, it follows that such an element $c$ is also in the center of the highest dimensional cell group that is contained $G_e$. It is not hard to see that any two twists commute since either they are twists around loops in disjoint twist regions or the loops they twist around commute since they are both in the center of the same highest dimensional cell group. 

Hence, $\mathcal{T}(M)$ is a torsion-free abelian group. If $M$ has finitely many pieces, then $\mathcal{T}(M)$ is finitely generated. 
\end{proof}
Topologically, Theorem~\ref{Out(pi_1(M))} says that an element $\Out(\pi_1(M))$ is a composition of twists and turns (on the level of fundamental groups). In the next section, we will prove that one can realize these by actual homeomorphisms of $M$.

\section{Rigidity of piecewise locally symmetric spaces}\label{rigidity}

Before proving Theorem \ref{Mostow}, we state and prove the properties of the boundary strata of the Borel-Serre compactification of noncompact arithmetic manifolds that we will need.

\begin{enumerate}
\item[a)] The cross section of each stratum of an arithmetic manifold $M$ is finitely covered by a manifold $\widehat{C}$ that has the structure of a fiber bundle with fiber a compact nilmanifold $F$ and base a locally symmetric manifold $B$ with nonpositive curvature that can contain a local Euclidean factor. The fundamental group of $\widehat{C}$ has the structure given by the following extension
\[ 1 \longrightarrow \pi_1(F) \longrightarrow \pi_1(\widehat{C}) \longrightarrow \pi_1(B) \longrightarrow 1, \] 
where $\pi_1(F) = \Gamma_{N_P}$, and $\pi_1(B)$ is the fundamental group of a compact, nonpositively curved, locally symmetric space. By the Godement compactness criterion (Theorem $5.30$ in \cite{Witte}), the group $\pi_1(F)$ is nontrivial. 

\item[b)] The center of $M_PN_P$ is contained in $N_P$. 
\end{enumerate}

Let $P$ be a parabolic subgroup of $G$ as in Section \ref{sec:compactifications}. Let $L_P = P/N_P$ be the Levi quotient of $P$. Then both $N_P$ and $L_p$ are rational algebraic groups. The group $L_P \cong A_PM_P$. 
Let 
\[\Gamma_P = \Gamma \cap P \quad \text{and} \quad \Gamma_{N_P} = \Gamma \cap N_P.\] 
Then $\Gamma_P$ is an arithmetic subgroup of $N_P$ and $N_P/\Gamma_{N_P}$ is compact (\cite{Witte}). The projection of $\Gamma_{P}$ in $L_P$ is an arithmetic group of $M_P$ (\cite{BJloc}) denoted by $\Gamma_{M_P}$. Then $\Gamma$ has the structure given by the short exact sequence 

\[ 1 \longrightarrow \Gamma_{N_P} \longrightarrow \Gamma_P \longrightarrow \Gamma_{M_P} \longrightarrow 1 \]
that need not split. The group $\Gamma_{N_P}$ is the fundamental group of a compact nilmanifold and $\Gamma_{M_P}$ is the fundamental group of a compact locally symmetric space with nonpositive curvature.

\begin{proof}[Proof of property (a)]
This follows from the Langlands decomposition of $\Q$ parabolic subgroups of $G$. To see this, let 
$\Lambda$ be a finite index, torsion-free subgroup of $\Gamma_{M_P}$. Then $\Gamma_{N_P}\rtimes \Lambda$ corresponds to a finite-sheeted cover $\widehat{C}$ of $C$ that has the structure of a fiber bundle with fiber $F = N_P/\Gamma_{N_P}$ and base $B = X_P/\Lambda$. The group $\Gamma_{N_P}$ is a cocompact lattice in $N_P$ (see \cite{Witte}). The manifold $X_P/\Lambda$ is a compact, locally symmetric, nonpositively curved manifold since $\Lambda$ is torsion-free. 
\end{proof}

\begin{proof}[Proof of Property (b)]
Firstly, we show that any element $g$ in the identity component $Z^0$ of the center $Z$ of $M_PN_P$ that is not in $N_P$ must commute with the Borel subgroup of $G$. This will be a contradiction since the centralizers of Borel subgroups are trivial. Then we will show that $Z$ must be connected.

Let $T$ be a maximal $\R$-split torus and let $\mathfrak{a}$ be the Lie algebra of $T$.  Then the Lie algebra of the unipotent radical $N_P$ of $P$ is 
\[ \mathfrak{n}_I = \sum_{\alpha \in \Phi^+ \setminus \Phi_I} g_\alpha,  \]
where $\Phi^+$ is the set of positive roots and $\Phi_I \subset \Phi^+$ is a set of simple roots. Let
\[\mathfrak{a}_I = \cap_{\alpha\in I}\ker \alpha. \]
Let $\mathfrak{a}^I$ be the orthogonal complement of $\mathfrak{a}_I$. Then the Lie algebra of $M_P$ is
\[ \mathfrak{m}_I = \mathfrak{a}^I \oplus \sum_{\alpha \in \Phi_I} g_\alpha \oplus l,\]  
for $l$ a subspace of the centralizer of $\mathfrak{a}$. (See \cite{BJloc} for details of the above). Now, since $T$ normalizes $M_PN_P$, it normalizes $Z$. Hence, the Lie algebra $\mathfrak{z}$ of $Z^0$ is a direct sum of subspaces of root spaces $g_\alpha$. If $\mathfrak{z}$ contains elements $x$ in a root space $g_\beta$ not in $\Phi_I$. Observe that $\beta$ cannot be $0$, for $g_\beta$ commutes with $g_\alpha$ for all $\alpha \in \Phi_I$, which means that $g_\beta \subset \mathfrak{a}_I$, which intersects with $\mathfrak{m}_I$ trivially.

If $\beta \in \Phi_I$, then $x$ commutes  with $\mathfrak{n}\oplus\mathfrak{m}$ and $\mathfrak{a}_I$ since $\mathfrak{a}_I \subset \ker g_\beta$. Therefore, $x$ commutes with $\Phi^+  \oplus \mathfrak{a}^I$. Thus  $x$ commutes with a Borel subgroup contained in $P$. Since the centralizer of any Borel subgroup is trivial, this is a contradiction. So $\mathfrak{z}$ is contained in $\mathfrak{n}_I$. Hence $Z^0$ is contained in the unipotent radical $N_P$ of $P$.

If $g_\beta \subset l$, then $x$ commutes with $\mathfrak{a}$ (since $l$ is a subspace of the centralizer of $\mathfrak{a}_I$) and $\Phi^+$. Hence $x$ commutes with the Borel subgroup and again, we get a contradiction. 

Now, we prove that $Z$ is connected. Suppose it is not. Then there is an element $g$ in $Z$ such that $g = mn$, for $n \in N_P$ and $m \in M_P -\{1\}$. Let $X_P$ be the symmetric space $(K\cap M_P)\backslash M_P$. Then $m$ acts on $X_P$, and the action of $m$ commutes with $M_P(\Z)$, which is a lattice of $M_P$. Thus $m$ commutes with the isometry group of $X_P$. 

Since $X_P$ is isometrically a product of symmetric space of noncompact type $Y$ and a Euclidean space $E$, it follows that $m$ must split into a product of two isometries, each of which preserves either $Y$ or $E$. Hence, $m$ acts by a translation on $E$ and by the identity on $Y$. Since each translation can be connected to the identity via a one parameter subgroup (namely translation with the same direction but different displacement), this means that if the above translation is nontrivial, then there is vector in $\mathfrak{z}$ that is not in $\mathfrak{n}_I$, which is a contradiction to above. Hence, $m$ acts on $X_P$ trivially, which implies that $\exp(m) \in K\cap M_P$. This means that $m$ is an isometry of $X_P$ the preserves the fiber $N_P$. Thus, the restriction of $m$ to each fiber has to be an isometry.

By \cite[Theorem 9]{Tamrigidity}, any isometry of a simply connected, $2$-step nilpotent Lie group that commute with a lattice has to be left multiplication by an element of the center. The proof of the theorem can be extended easily to the general case of a simply connected nilpotent Lie group. Hence, the element $m$ has to be in the center of $N_P$, which is a contradiction to the assumption that $m \in M_P$. Therefore, $Z^0$ contains elements outside $N_P$, which is a contradiction.   
\end{proof}

Now we prove Theorem \ref{Mostow}.

\begin{proof}[Proof of Theorem~\ref{Mostow}]
Let $\phi \colon \pi_1(M) \longrightarrow \pi_1(M)$ be an isomorphism.  We are going to construct a homeomorphism from $M$ to $M$ by firstly defining homeomorphisms on each piece in the decomposition of $M$ and then gluing them together in such a way that the resulting homeomorphism induces the isomorphism $\phi$. 

Let $M_i$ , for $i$ in some index set $I$, be the pieces in the decomposition of $M$. By Lemma~\ref{irr vertex group}, the map $\phi$ defines a bijection $\alpha \colon I \longrightarrow I$ such that $\phi(\pi_1(M_i)) = \pi_1(M_{\alpha(i)})$ up to conjugation. By the Mostow Rigidity Theorem, the restriction of $\phi$ on each vertex group of $\pi_1(M)$ up to conjugation is induced by an isometry $f_i$ from $M_i$ to $M_{\alpha(i)}$. For each $i$, the isometry $f_i$ extends to a homeomorphism between manifolds with corners from $M_i$ to $M_{\alpha(i)}$ (\cite[Proposition III.5.13]{Ji}), which we will also call $f_i$. 

At this point one may want to glue the isometries $f_i$'s together and claim that it is the desired homeomorphism of $M$. However, there are two problems. One is that the gluing of $f_i$ at each pair of boundaries of $M_i$'s that are glued together might not be compatible with the gluing. The other problem is that if $M_i$ and $M_j$ are adjacent pieces (that is, they share a codimension $1$ stratum), it may happen that ${f_i}_*$ and ${f_j}_*$ define different isomorphisms (by a conjugate) on the fundamental group of $M_i$ and $M_j$ even though they agree on the common boundary stratum. See Lemma~\ref{twist} above for such an example of $\phi$.

Hence we need to do some modifications to $f_i$'s near the boundary of $M_i$ before gluing them together. Specifically, it suffices to show the following two things. Firstly, if $M_i$ and $M_j$ are adjacent pieces whose intersection is $S$, then there is an isotopy from $f_i$ (restricted to $S$) to $f_j$ (restricted to $S$). Once we have this we can modify the maps $f_i$ and $f_j$ by an isotopy of $S$ on a tubular neighborhood of $S$ that is compatible with the gluing of $M$. However, the induced map of the obtained homeomorphism may be different from $\phi$ by a twist. Thus secondly, we need to realize twists as homeomorphisms by realizing the change of basepoint given by the conjugation difference between ${f_i}_*$ and the restriction of $\phi$ to the given stratum. It suffices to show that or a twist along a loop $c$ in the center of $\pi_1(S)$ for some codimension $1$ stratum $S$, there is an isotopy of $S$ that moves the basepoint around a loop homotopic to $c$. So the two problems boil down to proving the same thing, that is, the following theorem, which we will assume for now and give the proof after this. 

\begin{theorem}\label{cusp isotopy}
Let $M$ be a piecewise locally symmetric manifold that has  one stratum $S$. If $f \colon \widetilde{S} \longrightarrow \widetilde{S}$ is an $\pi_1(S)$-equivariant isometry that is $\pi_1(S)$-equivariantly homotopic to the identity map of $\widetilde{S}$, then $f$ is $\pi_1(S)$-equivariantly isotopic to the identity map. Thus, if $g \in Z(\pi_1(S))$, then there is an isotopy on $S$ that moves the base point around a loop that is homotopic to $g$. 
\end{theorem}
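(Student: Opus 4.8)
The plan is to unravel what a codimension-$1$ stratum $S$ actually is and reduce the statement to a concrete isotopy problem on a fiber bundle. By property (a) of the boundary strata, $S$ is finitely covered by a bundle $\widehat{C} \to B$ with fiber a compact nilmanifold $F = N_P/\Gamma_{N_P}$ and base a compact nonpositively curved locally symmetric manifold $B = X_P/\Lambda$ (possibly with a Euclidean factor); on the level of universal covers, $\widetilde S = N_P \times X_P$ with the horospherical product metric, and $\pi_1(S) = \Gamma_P$ sits in the extension $1 \to \Gamma_{N_P} \to \Gamma_P \to \Gamma_{M_P} \to 1$. An isometry $f$ of $\widetilde S$ that is $\pi_1(S)$-equivariantly homotopic to the identity must act trivially on $\pi_1(S)$ up to inner automorphisms, hence by Mostow rigidity applied to the semisimple part $X_P$ it acts as the identity on the $X_P$-factor, and on the nilpotent factor it must be left translation by a central element $z$ of $N_P$ — this is exactly the structure forced by the argument in the proof of property (b) (any isometry of a simply connected nilpotent Lie group commuting with a lattice is left multiplication by a central element). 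So the content of the theorem is: left translation by a central element $z\in Z(N_P)$, acting on $\widetilde S = N_P\times X_P$, is $\pi_1(S)$-equivariantly isotopic to the identity.

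Next I would build the isotopy explicitly using a one-parameter subgroup. Since $Z(N_P)$ is a connected simply connected abelian Lie group, choose a continuous path $t\mapsto z_t$ in $Z(N_P)$ with $z_0 = e$ and $z_1 = z$ (e.g. $z_t = \exp(t\log z)$). Define $f_t\colon \widetilde S \to \widetilde S$ by $f_t(n,x) = (z_t n, x)$. Each $f_t$ is a diffeomorphism (left translation is a diffeomorphism of $N_P$), $f_0 = \id$, and $f_1 = f$. The key point is equivariance: for $\gamma\in\Gamma_P$ acting on $\widetilde S$, one checks that left translation by a \emph{central} element of $N_P$ commutes with the $\Gamma_P$-action — this uses precisely that $A_P$ normalizes $M_PN_P$ and that $z_t$ is central in $N_P$, so conjugation by any element of $\Gamma_P\subset P = M_PA_PN_P$ sends the central one-parameter subgroup to itself and in fact fixes it pointwise (the Levi and torus parts act trivially on the center of $N_P$ in the relevant sense, the $N_P$-part commutes with $z_t$ by centrality). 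Hence each $f_t$ descends to a diffeomorphism of $S$ and $\{f_t\}$ is an isotopy of $S$ through the identity.

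For the final sentence: given $g\in Z(\pi_1(S)) = Z(\Gamma_P)$, property (b) shows that the center of $M_PN_P$ — and hence $Z(\Gamma_P)$ after passing to the lattice — lies in $N_P$, in fact in $Z(N_P)$. So $g$ corresponds to a central element of $N_P$, and the isotopy $\{f_t\}$ above, evaluated at the basepoint, traces out a loop in $S$ freely homotopic to the image of $g$; running the isotopy moves the basepoint around this loop, which is what realizes the conjugation difference (the twist) as an honest isotopy. The main obstacle I expect is the equivariance bookkeeping in the middle step — making sure that left translation by $z_t$ genuinely commutes with the full $\Gamma_P$-action on $\widetilde S = N_P\times X_P$ and not merely with $\Gamma_{N_P}$; this is where one must invoke the Langlands decomposition $P = M_PA_PN_P$, the fact that $A_P$ normalizes $M_PN_P$, and centrality of $z_t$ in $N_P$, rather than anything about the metric. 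Once equivariance is in place, the rest is formal.
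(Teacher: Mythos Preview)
Your approach is essentially the same as the paper's: identify $\widetilde f$ with left translation by an element of $Z(N_P)$ via Property~(b), then take the one-parameter subgroup $z_t=\exp(t\log z)$ as the isotopy and check it descends. The paper handles the equivariance step you flag as the main obstacle by noting that since $N_P$ is normal in $P$, conjugation by $\gamma\in\Gamma_P$ acts \emph{linearly} on the straight-line path $t\mapsto t\log z$ in $\mathfrak n_P$, and since the endpoint $z=\widetilde f_1$ already commutes with $\Gamma_P$ (by hypothesis), linearity forces every $z_t$ to commute as well---this is cleaner than trying to argue directly that the Levi and torus parts act trivially on $Z(N_P)$.
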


Assuming Theorem \ref{cusp isotopy}, we observe that the restriction of $f_i$ and $f_j$ to each pair of boundary components of $M_i$ and $M_j$ that are identified, say $S$, induces the same isomorphism of $\pi_1(S)$ up to conjugation. Since $S$ is aspherical, the restriction of $f_i$ and $f_j$ to $S$ are homotopic isometries. By Theorem~\ref{cusp isotopy}, we can modify the maps $f_i$ and $f_j$ by an isotopy of $S$ on a tubular neighborhood of the corresponding boundary component of $M_i$ and $M_j$ that is compatible with the gluing of $M$ and the action of $\phi$ on fundamental groups. Let $f \colon M \longrightarrow N$ be the homeomorphisms obtained by gluing the modified $f_i$. Then $f_* = \phi$.


Now we construct the homomorphism 
\[\rho \colon \Out(\pi_1(M)) \longrightarrow \Homeo(M)/\Homeo_0(M).\] 
To do this, we need to slightly modify the decomposition of $M$ into pieces. Instead of taking $M$ as a union of compact manifolds with corners $M_i$ glued along pairs of diffeomorphic strata, for each codimension $k$ stratum $P$ of an $M_i$, we glue $P\times [0,1]^k$ to $P$ along $P \times (0,0, ..., 0)$. If $Q \subseteq P$ is codimension $k+1$ stratum, then we glue $Q \times [0,1] \times (0,0, ..., 0)$ to $P \times (0,0, ..., 0)$. The space $N_i$ we obtain is a compact manifold with corners that is diffeomorphic to $M_i$. Hence $M$ is a union of $N_i$ glued to each other in the same way as the $M_i$'s are. We will take this decomposition of $M$ into $N_i$'s for the rest of the proof. 

For each $\varphi \in \Out(\pi_1(M))$, we define $\rho(\varphi)$ to be the homeomorphism that takes $M_i$ isometrically onto $M_{\varphi(i)}$. Any twist and turn will happen on the tubes connecting the boundary strata as follows. If $S$ is a codimension $k$ stratum, one takes the product of the corresponding $k$ straight line isotopies as in the proof of Theorem~\ref{cusp isotopy} that realizes half of the amount that needs to be twisted. 

We claim that $\rho$ is a homomorphism. To see this, for each $\varphi \in \Out(\pi_1(M))$, the homeomorphism $\rho(\varphi)$ is the unique homeomorphism that induces $\varphi$ that is an isometry when restricted to each $M_i$, and that is the straight line isotopy on each tube. The composition of any two such maps is a map of the same type. By uniqueness of such maps, $\rho$ is a homomorphism.  
\end{proof}

\begin{proof}[Proof of Theorem \ref{cusp isotopy}]
We have, as before 
\[ 1 \longrightarrow F \longrightarrow C \longrightarrow B \longrightarrow 1,\]
where $F$ is the torsion-free, discrete, cocompact subgroup of the isometry group of a simply connected Riemannian nilpotent Lie group $H$ (i.e. $H$ is endowed with a left-invariant Riemannian metric), which is the unipotent radical of the parabolic $P$ corresponding to $C$. 

If $f$ is an isometry of $S$ that is homotopic to the identity map, then if $\widetilde{f}$ is a lift of $f$ to the universal cover $\widetilde{S}$ of $S$, then $\widetilde{f}$ commutes with all the deck transformation of $\widetilde{S}$. By the proof of Property (b) above, the map $\widetilde{f}$ is in the center of the unipotent radical $N_P$ of $P$.

Since, $N_P$ is a nilpotent, simply connected Lie group, the exponential map is a global diffeomorphism from $\mathfrak{n}_P$ to $N_P$. In logarithmic coordinates of $N_P$, one can take the straight line in $N_P$ from the identity to $\widetilde{f}$. 
Since $\widetilde{f}_t$ is a straight line path in the Lie algebra of the nilpotent Lie group $N_P$, and $N_P$ is normal in $P$, it follows that conjugation by $\gamma \in C$ preserves this path. Since $\widetilde{f}_1 = \widetilde{f}$ commutes with $C$, it follows that the same is try for $\widetilde{f}_t$ for a given $t \in [0,1]$. That is, 
\[ \widetilde{f}_t \circ \gamma = \gamma \circ \widetilde{f}_t, \qquad \text{for all} \; \gamma \in C.\]
So multiplication by $\widetilde{f}_t$ in $M_PN_P$ is an $C$-equivariant isotopy. Hence, it descends to an isotopy on $S$.

\end{proof}

\section{Appendix: Complexes of groups}

The theory of complexes of groups was developed as a natural generalization of the Bass-Serre theory of graphs of groups. There are, in general, two perspectives one can take in Bass-Serre theory. One is that in the spirit of van Kampen's theorem. The other is to view it as an inverse problem of groups acting on trees. The theory of complexes of groups seems to have been developed with the latter point of view. 

For the manifolds in this paper, i.e., piecewise locally symmetric spaces, the first point of view is more natural and geometric. In this section, we are not going to develop the whole theory of complexes of groups from scratch, but we are going to present the first perspective for the complexes of groups as a guiding principle of geometric understanding. We are going to review the theory of complexes of groups in the context of this paper. Although this may not be of the greatest generality as the theory in the literature, it gives all that is needed in this paper, and the author believes that this captures the heart of complexes of groups by looking at concrete, simpler settings.

A \emph{convex polytope} $P$ in an affine space $\mathcal{A}$ is the convex hull of a finite subset. The \emph{dimension} of $P$ is the dimension of the affine subspace that it spans.

A \emph{convex cell complex} $Y$ is a collection $\Lambda$ of convex polytopes in an affine space $\mathcal{A}$ such that
\begin{itemize}
\item[i)] if $P \in \Lambda$ and $F$ is a face of $P$, then $F \in \Lambda$ and
\item[ii)] for any two polytopes $P$ and $Q$ in $\Lambda$, either $P \cap Q = \varnothing$ or $P \cap Q$ is a common face of both polytopes.
\end{itemize} 

The barycentric subdivision of a convex cell complex $Y$ is an ordered simplicial complex $bY$. An \emph{ordered simplicial complex} $X$ is a simplicial complex in which there is a partial order on the set of vertices $V(X)$ such that $X$ is the geometric realization of the simplicial complex $\Flag(V(X))$, the set of finite chains of $V(X)$ (see \cite[Appendix A]{Davis} for the geometric realization of the simplicial complex $\Flag(P)$ for a partially ordered set $P$). The partial order on $V(X)$ gives each edge in $X$ a direction. Two edges $a$ and $b$ are composable if $t(b) = i(a)$, where $t$ denotes the terminal point and $i$ denotes the initial point of an edge. 

Let $Y$ be a convex cell complex. A \emph{complex of spaces} over $Y$ is a connected CW complex $K$ and a map $p \colon K \longrightarrow Y$ such that 
\begin{itemize}
\item[i)] for each open cell $e_\alpha$ of $Y$, the pre-image $p^{-1}(e_\alpha)$ is a connected subcomplex of $K$ of the form $K_{e_\alpha}\times e_\alpha$, where $K_{e_\alpha}$ is the pre-image of the center of the cell $e_\alpha$,
\item[ii)] if $e_\beta$ is a cell contained in the boundary of $e_\alpha$, the induced map on fundamental groups $\pi_1(K_{e_\beta}) \longrightarrow \pi_1(K_{e_\alpha})$ is injective.
\end{itemize} 

If $K$ is a complex of spaces over $Y$, then there is an obvious bijection between the set of vertices of $bY$ and the set of spaces $K_{e_\alpha}$, for some cell $e_\alpha$ in $Y$. By the van Kampen theorem, the fundamental group $\pi_1(K)$ depends only on the fundamental groups of the spaces $K_{e_\alpha}$, the complex $Y$, and the maps $\pi_1(K_{e_\beta}) \longrightarrow \pi_1(K_{e_\alpha})$ whenever they are defined. This motivates the definition of complexes of groups and their fundamental groups.  

A \emph{complex of groups} $G(X)$ on an ordered simplicial complex $X$ is given by the following data.
\begin{enumerate}
\item[1)] For each $\sigma \in V(X)$, a group $G_\sigma$, called a \emph{local group}, is associated to $\sigma$.
\item[2)] For each $a \in E(X)$, an injective homomorphism $\psi \colon G_{i(a)} \longrightarrow G_{t(a)}$.
\item[3)] For each pair $a, b \in E(X)$ of composable edges, an element $g_{a,b} \in G_{t(a)}$ such that 
\begin{enumerate}
\item[a)] $\Ad(g_{a,b}) \psi_{ab} = \psi_a\psi_b$.
\item[b)] $\psi_{a}g_{b,c}g_{a,bc} = g_{a,b}g_{a,bc}$ for each triple of composable edges $a, b, c \in E(X)$. This is called the \emph{cocycle condition}.
\end{enumerate}
\end{enumerate}

If $K$ is complex of spaces over a complex $Y$, then associated to $K$ there is a complex of groups over $X := bY$. It is geometrically clear where the first two conditions in the definition of complexes of groups come from. Condition  ($3$a) corresponds to how the fiber corresponding to a $2$-dimensional cell is included into the graph of spaces corresponding to the $1$-skeleton of $X$. One can interpret $g_{a,b}$ as a change of base point, that is, the base point difference between two different ways of including $K_{i(b)}$ into $K_{t(a)}$: one is via $\psi_a\psi_b$, and the other is directly via $\psi_{ab}$. Condition ($3$b), or the cocyle condition, is the consistency condition that correspond to the change of basepoint between different inclusions of a higher dimensional cell group to a lower dimension cell group. 

The \emph{geometric realization} of a complex of groups $G(X)$ is a complex of spaces $K(X)$ over a complex $Y$ whose barycentric subdivision $bY$ is $X$ such that the associated complex of groups is $G(X)$. The \emph{fundamental group of a complex of groups} $G(X)$, denoted by $\pi_1(G(X))$ is the fundamental group of its geometric realization. The \emph{aspherical realization} or \emph{classifying space $BG(X)$ of a complex of groups} $G(X)$ is a geometric realization of $G(X)$ such that the fiber over each point is aspherical. For a given complex of group $G(X)$, the classifying space $BG(X)$ always exists and the homotopy type of its classifying spaces do not depend on the choice of classifying spaces \cite{Haefliger}. 

The nerve of a piecewise locally symmetric space $M$ is a convex cell complex $Y$. Let $X$ be the barycentric subdivision $bY$. The manifold $M$ has the structure of a complex of spaces since the inclusion of one stratum into another is injective on fundamental groups. Hence, $\pi_1(M)$ is the fundamental group of the associated complex of groups, say, $G(X)$. Then the vertex groups of $G(X)$ correspond to the fundamental groups of the strata of $M$. Since the fiber over each point of $Y$ in $M$ is an aspherical manifold, $M$ is an aspherical realization of the complex of groups $G(X)$, that is, $M$ is a $BG(X)$. 

The fundamental group of a complex of groups $G(X)$ has a presentation as follows. Let $T$ be a maximal tree in the $1$-skeleton of $X$. The generators are those in each local group $G_\sigma$, for $\sigma \in V(X)$, and all edges $a \in E(X)$, the set of edges of $X$. The relations are
\begin{itemize}
\item[a)] the relations in the group $G_\sigma$'s,
\item[b)] for each $a \in E(X)$ and $h \in G_{i(a)}$, then $\psi_a(h) = a^{-1}ha$,
\item[c)] for each pair of composable edges $a$ and $b$ with $c =ab$, then $c = bag_{a,b}$,
\item[d)] $a = 1$ for $a \in T$
\end{itemize}

Let $G(X)$ be a complex of groups over $X$. Like in Bass-Serre theory, where the universal cover of a graph of spaces is a graph of spaces over a simply connected graph (i.e. a tree, called the Bass-Serre tree associated to the given graph of groups), the universal cover $\widetilde{BG(X)}$ of $BG(X)$ is a simply connected complex of spaces over a simply connected complex $\widetilde{X}$ of the same dimension as $X$. In the same way as the fundamental group of a graph of groups acts without inversions on the Bass-Serre tree with quotient the original graph, the group $\pi_1(G(X))$ acts without inversions on $\widetilde{X}$ with quotient $X$. 

However, unlike in the theory of graphs of groups, the inclusion of $G_\sigma$ into the global group $\pi_1(G(X))$ needs not be injective. The stabilizer of a cell $\widetilde{\sigma}$ in $\widetilde{X}$ that projects to a cell $\sigma$ is not, in general, isomorphic to $G_\sigma$, but is isomorphic to the image of $G_\sigma$ under the canonical inclusion of $G_\sigma$ into $\pi_1(G(X))$. This gives rise to the notion of \emph{developability} for a complex of groups.

A complex of groups is called \emph{developable} if the inclusion of each vertex group $G_\sigma$ into the global group $\pi_1(G(X))$ is injective. This is equivalent to the usual notion of developable, as in \cite{Haefliger}, which is that there exists a complex $Y$ such that $X$ is the quotient of $Y$ under the action of a group without inversion. It follows that for a developable $G(X)$, the stabilizer in $\pi_1(G(X))$ of a cell $\widetilde{\sigma}$ in $\widetilde{X}$ that projects to a cell $\sigma$ is a conjugate of $G_\sigma$, which leads to the notion of local development of a vertex $\sigma \in V(X)$.  

For each $\sigma \in X$, the \emph{local development} of $\sigma$ is the following complex $\st(\widetilde{\sigma})$. Firstly we define a complex $\St(\widetilde{\sigma})$. The set of vertices of $\St(\widetilde{\sigma})$ is one-to-one correspondence with set $D_\sigma$ of cosets of $G_\delta$, for some subgroup $G_\delta$ of $G_\sigma$ and $G_\lambda$ that contains $G_\sigma$. Give $D_\sigma$ a partial ordering by inclusion. The complex $\St(\widetilde{\sigma})$ is the geometric realization of $\Flag(D_\sigma)$. Let $\widetilde{\sigma}$ be the vertex corresponding to $G_\sigma$.  We define $\st(\widetilde{\sigma})$ to be the star of  $\widetilde{\sigma}$ in $\St(\widetilde{\sigma})$, which is defined to be the union of the interior of the simplices in $X$ that meet $\widetilde{\sigma}$. The group $G_\sigma$ acts on $\st(\widetilde{\sigma})$ in the obvious way with quotient the star of $\st(\sigma)$ in $X$.

If $X$ is given a geodesic metric $d$, then this metric lifts to a geodesic metric $\widetilde{d}$ on $\widetilde{X}$. For example, for the case of a piecewise locally symmetric space $M$, the complex $X$ naturally admits a piecewise Euclidean metric. In general, suppose that $X$ is an $M_\kappa$-polyhedral complex, that is, each simplex in $X$ is metrized as a simplex in $M_\kappa$, the simply connected, complete, Riemannian manifold with constant curvature $\kappa$. The metric on each simplex of $X$ induces a metric on $\st(\widetilde{\sigma})$ for each $\sigma\in X$ that makes $\st(\widetilde{\sigma})$ an $M_\kappa$-polyhedral complex. If for each $\sigma \in V(X)$, the local development $\st(\widetilde{\sigma})$ has curvature $\leq \kappa$, then we say that $G(X)$ has curvature $\leq \kappa$. 

\begin{theorem}[\cite{Bridson}]
If a complex of group $G(X)$ is nonpositively curved, that is, it has curvature $\leq 0$, then it is developable.
\end{theorem}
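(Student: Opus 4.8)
The plan is to carry out the Bridson--Haefliger argument: realize the universal development of $G(X)$ as a genuine $\CAT(0)$ $M_\kappa$-polyhedral complex and read off from it that every local group injects into $\pi_1(G(X))$. Recall that $G(X)$ is developable exactly when the canonical morphism $\Phi\colon G(X)\to\widehat G:=\pi_1(G(X))$ restricts to an injection on each $G_\sigma$. Equivalently, letting $D:=D(X,\Phi)$ be the associated development --- the polyhedral complex whose cells over a simplex $\sigma$ are the cosets modelled on $\Phi(G_\sigma)$, with $\widehat G$ acting and $\widehat G\backslash D=X$ --- we must show that at each lift $\widetilde\sigma$ of $\sigma$ the star of $\widetilde\sigma$ in $D$ is isomorphic to the local development $\st(\widetilde\sigma)$, equipped with its tautological $G_\sigma$-action compatible with $\Phi$; granting this ``non-degeneracy'' of $D$, the identity $\Stab_{\widehat G}(\widetilde\sigma)=\Phi(G_\sigma)$ upgrades to $\Phi|_{G_\sigma}$ being an isomorphism onto the stabilizer, which is developability. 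Note that $D$ is automatically connected and simply connected, since it is obtained from the (simply connected) universal cover of a classifying space $BG(X)$ by collapsing each connected fibre-component to a point.

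First I would metrize $D$, pulling back cell by cell the $M_\kappa$-metric ($\kappa\le 0$) that $X$ already carries, so that $D$ becomes an $M_\kappa$-polyhedral complex with only finitely many isometry types of cells (when $X$ is finite; in general the corresponding finiteness of shapes still holds). The heart of the argument is then the local computation: for each vertex $\widetilde\sigma$ of $D$, the link $\Lk(\widetilde\sigma,D)$ is isomorphic, as a piecewise spherical complex, to $\Lk(\widetilde\sigma,\st(\widetilde\sigma))$. This is the step that consumes the full algebraic data of a complex of groups --- not merely the injectivity of the structure homomorphisms $\psi_a$, but the twisting elements $g_{a,b}$ and, crucially, the cocycle identity $(3\mathrm b)$, which is precisely the coherence needed to glue the germs of the various local developments around $\widetilde\sigma$ into one well-defined piecewise spherical complex with no monodromy. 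Granting this identification, the hypothesis that $G(X)$ is nonpositively curved says exactly that every such link is $\CAT(1)$; hence, by the link condition (Theorem 5.2 in \cite{Bridson}) for $M_\kappa$-polyhedral complexes with $\kappa\le 0$, the complex $D$ is locally $\CAT(\kappa)$.

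Now I would globalize. Since $D$ is simply connected and locally $\CAT(\kappa)$ with $\kappa\le 0$, the Cartan--Hadamard theorem gives that $D$ is itself globally $\CAT(\kappa)$, hence contractible and uniquely geodesic; no further passage to a cover is needed. Thus $\widehat G$ acts on the $\CAT(0)$ complex $D$ with quotient $X$, with $\Stab_{\widehat G}(\widetilde\sigma)=\Phi(G_\sigma)$, and with the star of $\widetilde\sigma$ identified with the local development $\st(\widetilde\sigma)$ as a $G_\sigma$-complex. From the non-degeneracy of $D$ one then extracts that $\Phi|_{G_\sigma}$ is injective: the kernel fixes $\st(\widetilde\sigma)$ in the prescribed way, and unwinding the coset description of the cells incident to $\widetilde\sigma$ against the presentation of $\pi_1(G(X))$ shows this kernel is trivial. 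Hence every local group $G_\sigma$ injects into $\pi_1(G(X))$, i.e. $G(X)$ is developable.

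I expect the main obstacle to be exactly the middle step: arranging the development $D$ and proving that the link at each of its vertices is isometric to the link of the corresponding local development. This is where the cocycle condition does its real work --- it is the vanishing of the obstruction that allows the locally defined developments $\st(\widetilde\sigma)$ to be patched into a global object --- and it is the only point where nonpositive curvature enters twice over: once locally, giving that each $\st(\widetilde\sigma)$ is $\CAT(\kappa)$ (equivalently its link is $\CAT(1)$), and once globally, through Cartan--Hadamard, converting the local $\CAT(\kappa)$ condition on $D$ into a genuine, faithful development. Once that identification is in place, the remaining bookkeeping --- simple connectivity of $D$ (already noted), the $\CAT(0)$ globalization, and the passage from vertex stabilizers to injectivity of the $G_\sigma$ --- is routine.
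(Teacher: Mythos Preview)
The paper does not give a proof of this theorem: it is stated in the appendix as a quoted background result from \cite{Bridson}, with no argument following the statement. There is therefore no ``paper's own proof'' to compare your proposal against.

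Your outline is in the spirit of the Bridson--Haefliger argument, and you correctly identify the crux. One caution about the logical order, however: you first construct $D=D(X,\Phi)$ for the canonical $\Phi\colon G(X)\to\pi_1(G(X))$ and then assert that $\Lk(\widetilde\sigma,D)\cong\Lk(\widetilde\sigma,\st(\widetilde\sigma))$. But for an arbitrary morphism $\Phi$ the link of a vertex of $D(X,\Phi)$ lying over $\sigma$ is the quotient of the local-development link by $\ker(\Phi|_{G_\sigma})$, so the identification you want is essentially equivalent to the injectivity you are trying to establish. In the actual Bridson--Haefliger proof the logic runs in the other direction: one glues the local developments themselves (which exist unconditionally and are $\CAT(0)$ by hypothesis), and it is the $\CAT(0)$ geometry---uniqueness of geodesics in each $\st(\widetilde\sigma)$---that prevents any folding during the gluing; injectivity of the local groups is then read off from the resulting faithful action, rather than being needed as input to a link computation. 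Your ingredients are the right ones, but the step you flag as ``the main obstacle'' is precisely where nonpositive curvature does its work, and it cannot be finessed by forming $D$ first and checking links afterward.
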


If $G(X)$ is developable, then $BG(X)$ is homotopy equivalent to $\widetilde{X}$. To see this, consider the universal cover $\widetilde{BG(X)}$ of $BG(X)$. If the inclusions of local groups are injective, then for each $\sigma \in V(X)$, each connected component of pre-image of $K_\sigma$ in $\widetilde{BG(X)}$ is a copy of the universal cover $\widetilde{K}_\sigma$, which is contractible. So $\widetilde{BG(X)}$ is a complex of contractible spaces over $\widetilde{X}$. It follows that $\widetilde{BG(X)}$ is homotopy equivalent to the base complex $\widetilde{X}$ (see \cite[Lemma E.3.3]{Davis}. Therefore, $\widetilde{BG(X)}$ is contractible if and only if $\widetilde{X}$ is.

A sufficient condition for the complex $\widetilde{X}$ to be contractible is that it is $\CAT(0)$. For the case of piecewise locally symmetric spaces, the simplices in $\widetilde{X}$ are Euclidean convex polyhedron. So the associated complex of groups $G(X)$ is over a $M_0$-complex. Suppose that for each $\sigma \in X$, the local development of $\sigma$ has curvature $\leq 0$. Then $G(X)$ is nonpositively curved, and thus, is developable by the above theorem. This also implies that $\widetilde{X}$ has curvature $\leq 0$ since being nonpositively curved is a local condition. Moreover, $\widetilde{X}$ is simply connected, and thus is $\CAT(0)$ if it is nonpositively curved, and thus is contractible. It then follows that $BG(X)$ is contractible.

For a $M_0$-complex $X$, it is nonpositively curved if the link condition is satisfied. An $M_\kappa$-polyhedral complex is said to satisfied the \emph{link condition} if for each vertex $\sigma \in X$, the link complex $\Lk(\sigma, X)$ is a $\CAT(1)$ space. 

\begin{theorem}[\cite{Bridson}]
An $M_\kappa$-polyhedral complex $X$ with finite shape (i.e. there are finitely many isometry classes of simplices of $X$) has curvature $\leq \kappa$ if and only if it satisfies the link condition.
\end{theorem}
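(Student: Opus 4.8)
The plan is to reduce the curvature condition, which is by definition local, to a statement about cones over links, and then to identify the cone condition with the link condition. The first geometric input I would invoke is the structure theory of $M_\kappa$-polyhedral complexes of finite shape (Bridson's theorem; see \cite{Bridson}): such an $X$ is a complete geodesic space, and every point $x\in X$ has a \emph{conical neighbourhood} --- there is $\varepsilon(x)>0$, bounded away from $0$ over all of $X$ precisely because there are only finitely many isometry types of simplices, such that the closed ball $\bar B(x,\varepsilon(x))$ in $X$ is isometric to the closed ball of radius $\varepsilon(x)$ about the cone point $\hat 0$ in the $\kappa$-cone $C_\kappa(\Lk(x,X))$ over the link. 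Since ``curvature $\leq \kappa$'' means exactly ``locally $\CAT(\kappa)$'', this identification shows that $X$ has curvature $\leq \kappa$ if and only if, for every $x\in X$, a neighbourhood of $\hat 0$ in $C_\kappa(\Lk(x,X))$ is a $\CAT(\kappa)$ space.

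The second ingredient is Berestovskii's cone theorem (\cite{Bridson}): a neighbourhood of the cone point of $C_\kappa(Y)$ is $\CAT(\kappa)$ if and only if $Y$ is $\CAT(1)$. Combined with the previous step this already gives that $X$ has curvature $\leq \kappa$ if and only if $\Lk(x,X)$ is $\CAT(1)$ for \emph{every} point $x$ of $X$. The remaining task is to trade ``every point'' for ``every vertex''. If $x$ lies in the interior of a simplex $\tau$ with $\dim\tau = m\geq 1$, the link splits as a spherical join $\Lk(x,X)\cong \mathbb{S}^{m-1}*\Lk(\tau,X)$, and by the join theorem (\cite{Bridson}) a join $\mathbb{S}^{m-1}*Z$ is $\CAT(1)$ exactly when $Z$ is. Hence the condition at all points of $X$ is equivalent to asking that $\Lk(\tau,X)$ be $\CAT(1)$ for every simplex $\tau$; the vertex case then immediately yields one implication of the theorem.

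For the converse I would argue by induction on $\dim X$ that if $\Lk(v,X)$ is $\CAT(1)$ for every vertex $v$, then $\Lk(\tau,X)$ is $\CAT(1)$ for every simplex $\tau$. Choosing a vertex $v$ of $\tau$, there is a canonical isometry $\Lk(\tau,X)\cong \Lk(\sigma_\tau,L)$, where $L:=\Lk(v,X)$ is a finite-shape piecewise spherical complex of dimension $<\dim X$ and $\sigma_\tau$ is the simplex of $L$ determined by $\tau$. So it suffices to know that links of simplices in a $\CAT(1)$ finite-shape piecewise spherical complex $L$ are again $\CAT(1)$, and this follows by running the same cone-plus-join argument one dimension down: for an interior point $q$ of $\sigma_\tau$, a small conical neighbourhood of $q$ in $L$ is convex, hence $\CAT(1)$ because $L$ is; Berestovskii's theorem then forces $\Lk(q,L)$ to be $\CAT(1)$; and the join theorem strips off the sphere factor, leaving $\Lk(\sigma_\tau,L)=\Lk(\tau,X)$ being $\CAT(1)$. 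This closes the induction and the proof.

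The step I expect to be the genuine obstacle is the first one: setting up the conical-neighbourhood description and, above all, extracting from the finite-shape hypothesis a \emph{uniform} positive lower bound on the radii $\varepsilon(x)$ --- which itself rests on the more basic fact that a finite-shape $M_\kappa$-complex is a complete geodesic space with well-controlled geodesics near vertices. This is precisely where the combinatorics of $X$ must be fed into its metric geometry, and it is the only place where finiteness of the set of shapes is really used; the cone theorem and the join theorem are general facts of $\CAT$-geometry and can be invoked as black boxes.
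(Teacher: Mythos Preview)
The paper does not supply its own proof of this theorem: it is stated in the appendix as a result quoted from \cite{Bridson}, with no argument given. Your proposal is therefore not competing against anything in the paper itself; it is a sketch of the standard proof one finds in Bridson--Haefliger (conical neighbourhoods via finite shapes, Berestovskii's cone theorem, the spherical join theorem, and induction on dimension to pass from links at vertices to links at arbitrary simplices). That sketch is correct in outline and faithful to the source the paper cites.
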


\bibliographystyle{amsplain}
\bibliography{bibliography}

\end{document}